\newcommand{\lvt}{\left|\kern-1.35pt\left|\kern-1.3pt\left|}
\newcommand{\rvt}{\right|\kern-1.3pt\right|\kern-1.35pt\right|}
\newtheorem{thm}{Theorem}[section]
\newtheorem{cor}[thm]{Corollary}
\newtheorem{lem}[thm]{Lemma}
\newtheorem{prop}[thm]{Proposition}
\newtheorem{exam}[thm]{Example}
\theoremstyle{remark}
\newtheorem{rem}{Remark}[section]
 \def\la{{\langle}}
 \def\ra{{\rangle}}
 \def\d{\mathrm{d}}
 \def\sph{{\mathbb{S}^{d-1}}}
 \def\sh{{\mathsf h}}
 \def\sm{{\mathsf m}}
 \def\sw{{\mathsf w}}
 \def\sQ{{\mathsf Q}}
 \def\sS{{\mathsf S}}
 \def\fD{{\mathfrak D}}
 \def\a{{\alpha}}
 \def\b{{\beta}}
 \def\g{{\gamma}}
 \def\l{{\lambda}}
 \def\o{{\omega}}
 \def\s{\sigma}
 \def\la{{\langle}}
 \def\ra{{\rangle}}
 \def\CD{{\mathcal D}}
 \def\CH{{\mathcal H}}
 \def\CU{{\mathcal U}}
 \def\CV{{\mathcal V}}
 \def\NN{{\mathbb N}}
 \def\RR{{\mathbb R}}
 \def\SS{{\mathbb S}}
 \def\VV{{\mathbb V}}
      \def\proj{\operatorname{proj}}
\def\lla{\langle{\kern-2.5pt}\langle}
\def\rra{\rangle{\kern-2.5pt}\rangle}
\newcommand{\wt}{\widetilde}
\newcommand{\wh}{\widehat}
\def\f{\frac}
\begin{document}

\title{Sobolev orthogonal polynomials on the conic surface}
\author[L. Fern\'andez]{Lidia Fern\'andez}
\address{Instituto de Matem\'aticas IMAG \&
Departamento de Matem\'{a}tica Aplicada. Universidad de Granada (Spain)}
\email{lidiafr@ugr.es}

\author[T. E. P\'erez]{Teresa E. P\'erez}
\address{Instituto
de Matem\'aticas IMAG \&
Departamento de Matem\'{a}tica Aplicada. Universidad de Granada (Spain)}
\email{tperez@ugr.es}

\author[M. A. Pi\~nar]{Miguel Pi\~nar}
\address{Instituto
de Matem\'aticas IMAG \&
Departamento de Matem\'{a}tica Aplicada. Universidad de Granada (Spain)}
\email{mpinar@ugr.es}
\author[Y. Xu]{Yuan~Xu}
\address{Department of Mathematics, University of Oregon, Eugene,
OR 97403--1222, USA}
\email{yuan@uoregon.edu}
\thanks{
First (LF), second (TEP) and third (MAP) authors thank grant PID2023.149117NB.I00; LF and TEP also thank CEX
2020-001105-M, and LF thanks PID2024-155133NB-I00; all funded by MICIU/AEI/10.13039/501100011033 and ERDF. The fourth author (YX) was partially supported by 
 Simons Foundation Grant \#849676}
\date{\today}
\subjclass[2010]{41A10, 42C05, 42C10, 33C45.}
\keywords{Orthogonal polynomials, Sobolev orthogonality, conic surface, Jacobi weight, approximation.}

\begin{abstract}
Orthogonal polynomials with respect to the weight function $\sw_{\b,\g}(t) = t^\beta (1-t)^\gamma$,
$\gamma > -1$, on the conic surface $\{(x,t): \|x\| = t, \, x \in \mathbb{R}^d, \, t \le 1\}$ are studied recently, and
are shown to be eigenfunctions of a second order differential operator $\CD_\gamma$ when $\beta =-1$.
We extend the setting to the Sobolev inner product, defined as the integration of the $s$-th normal derivative
$\fD = \frac{\d}{\d t} - t^{-1} \la x, \nabla_x\ra$ of the cone with respect to $w_{\beta+s,0}$ over the conic surface, 
plus a sum of integrals over the rim of the cone. Our main results provide an explicit construction of an 
orthogonal basis and a formula for the orthogonal
projection operators; the latter is used to exploit the interaction of differential operators and the projection operator,
which allows us to study the convergence of the Fourier orthogonal series. The study can be regarded as an
extension of the orthogonal structure to the weight function $w_{\beta, -s}$ for a positive integer $s$. It shows,
in particular, that the Sobolev orthogonal polynomials are eigenfunctions of $\CD_{\gamma}$ when $\gamma = -1$.
\end{abstract}

\maketitle

\section{Introduction}
\setcounter{equation}{0}

Orthogonal polynomials on the conic surface of revolution were studied recently, which are shown
to possess properties parallel to those of spherical harmonics on the unit sphere. 
Let $\VV_0^{d+1}$ be the conic surface
$$
  \VV_0^{d+1} = \{(x,t) \in \RR^{d+1}: \, \|x\| = t, \, x \in \RR^d, \, 0 \le t \le 1\}
$$
in $\RR^{d+1}$. For the weight function $\sw_{\b,\g}(t) = t^\b(1-t)^\g$, $\b > -d$ and $\g > -1$, the orthogonal
polynomials with respect to the inner product
$$
  \la f,g\ra_{\b,\g} = b_{\b,\g} \int_{\VV_0^{d+1}} f(x,t) g(x,t) \sw_{\b,\g}(t) \d \sm (x,t)
$$
where $\d \sm$ is the Lebesgue measure on the conic surface, are called the Jacobi polynomials on the cone. These polynomials are studied in \cite{OX20, X20, X21a, X21, X23}.
It was shown in \cite{X20} that these polynomials share many
properties of spherical harmonics, including an explicit orthogonal basis and an addition formula, which provides
essential tools for an extensive study in approximation theory and computational analysis over the cone in
\cite{X21}. In particular, they are used to study the best polynomial approximation in \cite{GX}. Another remarkable 
property of the Jacobi polynomials on the cone is that they are eigenfunctions of a second-order linear differential 
operator $\CD_\g$ when $\b = -1$, which is an analog of the Laplace-Beltrami operator on the unit sphere. 

The purpose of the present paper is to study Sobolev orthogonal polynomials on the conic surface, which
are orthogonal with respect to an inner product that contains derivatives. The first case is
$$
  \la f,g\ra_{\b, -1} =  \frac{1}{\o_d}
       \int_{\VV_0^{d+1}} \fD f (x,t)\fD g(x,t) t^{\b+1} \d \sm(x,t)
          +  \frac{\l}{\o_d} \int_{\SS^{d-1}}f(\xi,1) g(\xi,1) \d \s (\xi),
$$
where $\fD = \f{\d}{\d t} - t^{-1} \la x, \nabla_x\ra$ is the derivative in the normal direction of the cone, and we also 
consider $\la f,g\ra_{\b, -s}$ that involves derivatives up to $s$ order for a positive integer $s$. 
 Like in the case of $\g > -1$, our main result provides an explicit construction of orthogonal bases and a closed-form 
 formula for the orthogonal projection operator. The study requires an
extension of the Jacobi polynomials with a parameter being a negative integer, which needs to satisfy the
Sobolev orthogonality of one variable that is inherited from $\la \cdot,\cdot\ra_{\b,-s}$ when we restrict the
inner product to polynomials depending only on the $t$ variable. Several authors have studied such 
Sobolev orthogonal polynomials of one variable; see, for example, \cite{AAR02, MPP, GW, Sh1, Sh2, X19}
and \cite{MX}. We shall follow the approach in \cite{X19} since it is more convenient for studying orthogonal
projection operators and provides a link, in particular, between the Sobolev orthogonal structure and
the ordinary orthogonal structure, which is useful for studying the convergence of the Fourier orthogonal
series in the Sobolev orthogonal polynomials. In the framework of polynomial approximation theory on the ball
and standard or Sobolev orthogonal polynomials, we can refer to \cite{BF21, DX11, DaiX, F17a, F17b, LX14, PX18, W}, 
among others.  

In more than one way, our study extends the Jacobi polynomials for $\sw_{\b,\g}$ on the cone from
$\g > -1$ to $\g = -s$ with $s \in \NN$. We will show, in particular, that the spectral operator $\CD_{-s}$
has the Sobolev orthogonal polynomials as eigenfunctions if $s = 1$. While the latter fails for $s > 1$,
we do have a clear understanding of what the eigenspaces of $\CD_{-s}$ are. For orthogonal polynomials
in several variables, our study is also closely related to the Sobolev orthogonal polynomials on the unit ball,
which have been extensively studied (see \cite{DX} and its references therein). In particular, the description
of the eigenspaces of $\CD_\g$ is similar to the study on the unit ball in \cite{PX09}.

The paper is organized as follows. The next section is preliminary, in which we recall two essential ingredients
needed for our study, the Jacobi polynomials with negative parameters and spherical harmonics. In Section 3
we review results on ordinary orthogonal polynomials on the conic surface and discuss further properties of
the orthogonal projection operators. The Sobolev orthogonal polynomials are defined and studied in Section 4.
Finally, the eigenspaces of the operator $\CD_\g$ are discussed in Section 5.

\section{Preliminary}
\setcounter{equation}{0}

The study for orthogonal polynomials on the conic surface follows that of spherical harmonics on the unit
sphere. The latter will also be essential for constructing an orthogonal basis on the cone.
Another ingredient is the Jacobi polynomial, which we often need an extension to negative parameters
in the study of the Sobolev orthogonal polynomials.

\subsection{Jacobi polynomials with negative parameters}
The Jacobi polynomials $P_n^{(\a,\b)}$ are given explicitly by the hypergeometric function
\begin{equation}\label{eq:2F1Jacobi}
P_n^{(\a,\b)}(t) = \frac{(\a+1)_n}{n!} {}_2F_1 \left( \begin{matrix}-n, n+\a+\b+1 \\ \a+1\end{matrix}; \frac{1-x}{2}\right)
\end{equation}
for $\a,\b > -1$ and $n = 0, 1, 2, \ldots$. They are orthogonal with respect to the weight function
$w_{\a,\b}(t) = (1-t)^\a (1+t)^\b$ on $[-1,1]$ with $\a, \b > -1$, and they satisfy
$$
   \frac{c_{\a,\b}}{2^{\a+\b+1}}\int_{-1}^1 P_n^{(\a,\b)}(t) P_n^{(\a,\b)}(t) w_{\a,\b}(t) \d t = h_n^{(\a,\b)} \delta_{n,m},
$$
where $c_{\a,\b}$ is the constant so that $h_0^{(\a,\b)} =1$,
\begin{equation}\label{JacobiNorm}
  c_{\a,\b} = \frac{\Gamma(\a+\b+2)}{\Gamma(\a+1)\Gamma(\b+1)} \quad\hbox{and}\quad  h_n^{(\a,\b)}
    = \frac{(\a+1)_n(\b+1)_n (\a+\b+n+1)}{n!(\a+\b+2)_n (\a+\b+2n+1)}.
\end{equation}

For studying the Sobolev orthogonal polynomials, we often need the parameters $\a$ or $\b$ to be
negative integers. Such polynomials are discussed already in \cite{Sz}, but they are no longer orthogonal
with respect to $w_{\a,\b}$, since the weight function $(1-t)^a(1+t)^{b}$ is no longer integrable if $a$ or
$b \le -1$. Moreover, $P_n^{(\a,\b)}$ has a degree reduction if $n+\a+\b$ is a negative integer between
$1$ to $n$, which causes problems for studying the Sobolev orthogonal polynomials, especially in several
variables, since such polynomials are needed for all $n \in \NN_0$.

What we need in this paper are the polynomials $P_n^{(\a,-s)}$ with $\a > -1$ and $s \in \NN$. These
polynomials are well defined if $n \ge s$ and satisfy \cite[Section 4.22]{Sz}
\begin{equation}\label{eq:Jacobi_n<0}
    P_n^{(\a,-s)} (t) = \frac{(-\a-n)_s}{2^s (-n)_s} (1+t)^s P_{n-s}^{(\a,s)}(t), \quad n \ge s,
\end{equation}
which follows from \cite[(4.22.2)]{Sz} by using $P_n^{(\a,\b)}(-t) = (-1)^n P_n^{(\b,\a)}(t)$. The definition of
$P_n^{(\a,-s)}$ for $n < s$ could be problematic because of the degree reduction. Such polynomials have
been studied in the setting of the Sobolev orthogonal polynomials; for example, the Sobolev orthogonality
defined via the inner product
\begin{equation} \label{eq:SOPd=1}
   [f,g]_{\a,\b}^{-s} : = \int_{-1}^1 f^{(s)}(t) g^{(s)}(t) w_{\a,\b}(t) \d t + \sum_{k=0}^{s-1} \mu_k f^{(k)}(1)g^{(k)}(1),
\end{equation}
where $\mu_k$ are fixed positive constants and $\a ,\b > -1$. The study of such polynomials and their
orthogonality has appeared in several papers; see, for example, \cite{AAR02, GW, MPP, Sh1, Sh2, X17, X19}
and \cite{MX}. There are several ways to define a complete set of orthogonal polynomials for the inner
product \eqref{eq:SOPd=1}. We shall follow the approach given in \cite{X19}, see also \cite{Sh1, Sh2}, which
is more suitable for studying the Fourier orthogonal series. We now recall the necessary result from \cite{X19}.

For convenience, we first define a renormalization of the Jacobi polynomials,
\begin{equation} \label{eq:JacobiJ}
  \wh P^{(\alpha,\beta)}_{n}(t) = A_n^{(\a,\b)} {P}^{(\alpha,\beta)}_{n}(t) \quad\hbox{with}\quad
   A_n^{(\a,\b)} =  \frac{2^n}{(n+\a+\beta+1)_{n}}
\end{equation}
for $\a,\b> -1$. This normalization has the advantage that it satisfies, by \cite[(4.5.5)]{Sz},
\begin{align} \label{eq:diffJ}
  \frac{d}{dt} \wh P_n^{(\a,\b)}(t) =  \wh P_{n-1}^{(\a+1,\b+1)}(t).
\end{align}
Now, for $\a,\b > -1$, $s \in \NN$ and $n = 0,1,2,\ldots,$ we define a new sequence of polynomials
\begin{align} \label{eq:CJ}
\begin{split}
   J_n^{(\a-s,\b-s)}(t) := \begin{cases}  \dfrac{(t+1)^n}{n!}, & 0 \le n \le s-1, \vspace{.05in} \\
        \displaystyle{\int_{-1}^t \frac{(t-u)^{s-1}}{(s-1)!} \wh P_{n-s}^{(\a,\b)}(u) \d u}, & n \ge s.
   \end{cases}
\end{split}
\end{align}
It is easy to see that $J_n^{(\a-s,\b-s)}$ is a polynomial of degree $n$ and it satisfies
\begin{align}
 \partial^s J_n^{(\a-s,\b-s)}(t) &\ = \wh P_{n-s}^{(\a,\b)}(t), \qquad  n \ge s; \label{eq:CJ1a}\\
 \partial^k J_n^{(\a-s,\b-s)}(-1) &\ = \begin{cases} \delta_{k,n}, & n \le s-1, \\
         0, &  n \ge s,
         \end{cases} \qquad 0\le k \le s-1, \label{eq:CJ1b}
\end{align}
where $\partial^k$ denotes the $k$-th derivative. These are our polynomials that extend the definition of
the Jacobi polynomials to allow negative parameters, which are also orthogonal polynomials with respect to 
the Sobolev inner product \eqref{eq:SOPd=1}. More precisely, we have the following \cite{X19}:

\begin{thm}\label{thm:SOP-1d}
For $\a,\b > -1$ and $s \in \NN$. The polynomial $J_n^{(\a-s,\b-s)}$ is orthogonal with respect to the
inner product $\left[ \cdot, \cdot \right]_{\a,\b}^{-s}$ and its norm square is given by
$$
 \left[J_n^{(\a-s,\b-s)},J_n^{(\a-s,\b-s)}\right]_{\a,\b}^{-s}
   = \begin{cases}\mu_n &  0 \le n \le s-1 \\   \wh h_{n-s}^{(\a,\b)} & n \ge s \end{cases},
$$
where $\mu_n$ comes from \eqref{eq:SOPd=1}, and $\wh h_n^{(\a,\b)}$ is the norm square of $\wh P_n^{(\a,\b)}$, which is given in terms of $h_n^{(\a,\b)}$ by
$$
\wh h_n^{(\a,\b)} = \frac{2^{\a+\b+1}}{c_{\a,\b}} \left(A_n^{(\a,\b)}\right)^2 h_n^{(\a,\b)}.
%=  \frac{2^{\alpha+\beta+1}}{2n+\alpha+\beta+1}
%    \frac{\Gamma(n+\alpha+1)\,\Gamma(n+\beta+1)\Gamma(n+\alpha+\beta+1)}{n!\,\Gamma(2n+\alpha+\beta+1)^2}.
$$
\end{thm}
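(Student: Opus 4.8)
The plan is to prove the two orthogonality/norm claims separately, using the two defining pieces of the polynomials $J_n^{(\a-s,\b-s)}$ in \eqref{eq:CJ} and the structural identities \eqref{eq:CJ1a}--\eqref{eq:CJ1b}.

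First I would handle the low-degree range $0 \le n \le s-1$. Here $J_n^{(\a-s,\b-s)}(t) = (t+1)^n/n!$ is a polynomial of degree $n < s$, so its $s$-th derivative vanishes and the integral term in \eqref{eq:SOPd=1} drops out. Thus for any $m$ the pairing $[J_n^{(\a-s,\b-s)}, J_m^{(\a-s,\b-s)}]_{\a,\b}^{-s}$ reduces to the finite sum $\sum_{k=0}^{s-1}\mu_k\,\partial^k J_n^{(\a-s,\b-s)}(1)\,\partial^k J_m^{(\a-s,\b-s)}(1)$; but wait, \eqref{eq:CJ1b} records the derivatives at $-1$, not at $1$. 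So instead of the finite sum I should argue structurally: if $n \le s-1$ and $m \ge s$, then $\partial^s J_n^{(\a-s,\b-s)} = 0$ kills the integral, and I must show the boundary sum also vanishes — this is exactly where the $\mu_k$ at $1$ and the data at $-1$ in \eqref{eq:CJ1b} must be reconciled. I expect the paper actually places the discrete masses at $-1$ (or uses a version of \eqref{eq:SOPd=1} with evaluations at $-1$); under that convention \eqref{eq:CJ1b} gives $\partial^k J_n^{(\a-s,\b-s)}(-1) = \delta_{k,n}$ for $n \le s-1$ and $\partial^k J_m^{(\a-s,\b-s)}(-1) = 0$ for $m \ge s$, so the cross term is $0$, and for $m = n \le s-1$ only the $k = n$ summand survives, giving $\mu_n$. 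I would state this carefully, matching whichever evaluation point the inner product \eqref{eq:SOPd=1} uses.

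Next, for $n, m \ge s$, both polynomials have nonzero $s$-th derivative, and by \eqref{eq:CJ1b} all their derivatives up to order $s-1$ vanish at the evaluation point, so the entire boundary sum disappears and
\[
 \left[J_n^{(\a-s,\b-s)}, J_m^{(\a-s,\b-s)}\right]_{\a,\b}^{-s}
   = \int_{-1}^1 \partial^s J_n^{(\a-s,\b-s)}(t)\,\partial^s J_m^{(\a-s,\b-s)}(t)\, w_{\a,\b}(t)\,\d t.
\]
By \eqref{eq:CJ1a} this equals $\int_{-1}^1 \wh P_{n-s}^{(\a,\b)}(t)\,\wh P_{m-s}^{(\a,\b)}(t)\, w_{\a,\b}(t)\,\d t$, which is $\wh h_{n-s}^{(\a,\b)}\delta_{n,m}$ by the orthogonality of the (renormalized) Jacobi polynomials and the definition of $\wh h^{(\a,\b)}$. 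Finally, for the mixed case $n \le s-1$, $m \ge s$ (not already covered above), the integral term vanishes because $\partial^s J_n^{(\a-s,\b-s)} = 0$, and the boundary term vanishes by \eqref{eq:CJ1b}, completing orthogonality. The norm formula for $\wh h_n^{(\a,\b)}$ in terms of $h_n^{(\a,\b)}$ is then just the change of normalization \eqref{eq:JacobiJ} together with \eqref{JacobiNorm}, a one-line computation.

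The main obstacle is bookkeeping rather than depth: making sure the evaluation point in the discrete part of \eqref{eq:SOPd=1} is consistent with the "$-1$" appearing in \eqref{eq:CJ1b} and in the lower branch $(t+1)^n/n!$ of \eqref{eq:CJ}, so that the $\delta_{k,n}$ in \eqref{eq:CJ1b} really does pick out $\mu_n$ and kill all cross terms. Once that alignment is fixed, everything reduces to \eqref{eq:CJ1a}, \eqref{eq:CJ1b}, and the classical Jacobi orthogonality, and no genuinely new estimate is needed; this is why the result can be quoted directly from \cite{X19}.
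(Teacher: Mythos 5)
Your argument is correct, and it is essentially the expected one: the paper itself gives no proof of this theorem but quotes it from \cite{X19}, where the proof is exactly the computation you describe — the boundary sum handles the range $0\le n\le s-1$ via \eqref{eq:CJ1b}, the $s$-th derivative reduces the case $n,m\ge s$ to ordinary Jacobi orthogonality via \eqref{eq:CJ1a}, and the mixed case dies because both the integral term and the boundary data vanish. Your bookkeeping concern is also well founded: as printed, \eqref{eq:SOPd=1} places the point masses at $+1$, which is inconsistent with \eqref{eq:CJ}--\eqref{eq:CJ1b} and would falsify the stated norm $\mu_n$ for $n\le s-1$; the evaluations must be at $-1$, and this is confirmed by the paper's later use of the inner product in the proof of Theorem \ref{thm:SOP-cone}, where the rim $t=1$ of the cone corresponds to the Jacobi argument $1-2t=-1$. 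So your resolution (adopt the $-1$ convention matching \eqref{eq:CJ1b}) is the intended one, and with it every step of your proof goes through, including the one-line identification $\wh h_{n-s}^{(\a,\b)}=\frac{2^{\a+\b+1}}{c_{\a,\b}}\bigl[A_{n-s}^{(\a,\b)}\bigr]^2 h_{n-s}^{(\a,\b)}$ from \eqref{eq:JacobiJ} and \eqref{JacobiNorm}.
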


Our next proposition shows that, if $\a > -1$ and $s\in \NN$, then the definition $J_n^{(\a,-s)}$ in
\eqref{eq:CJ} agrees with that of \eqref{eq:Jacobi_n<0} when $n\ge s$.

\begin{prop}
For $\a > -1$ and $s \in \NN$,
\begin{equation}\label{eq:Jn(a,-s)}
  J_n^{(\a,-s)} (t) = \frac{(n-s)!}{n!}  (1+t)^s \wh P_{n-s}^{(\a,s)}(t), \qquad n \ge s.
\end{equation}
In particular, for $n \ge s$,
\begin{equation}\label{eq:Jn(a,-s)=Pn}
  J_n^{(\a,-s)} (t) = \frac{(-1)^s 2^s}{(-\a-n)_s}A_{n-s}^{(\a,s)} P_n^{(\a,-s)}(t).
\end{equation}
\end{prop}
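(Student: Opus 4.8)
The plan is to prove \eqref{eq:Jn(a,-s)} first and then obtain \eqref{eq:Jn(a,-s)=Pn} as an immediate corollary by comparing with \eqref{eq:Jacobi_n<0}. To establish \eqref{eq:Jn(a,-s)}, I would verify that the right-hand side, call it $Q_n(t) := \frac{(n-s)!}{n!}(1+t)^s \wh P_{n-s}^{(\a,s)}(t)$, satisfies the two defining properties of $J_n^{(\a,-s)}$ for $n \ge s$, namely the $s$-th derivative identity \eqref{eq:CJ1a} with $(\b-s) = -s$, i.e.\ $\b = 0$, so that we need $\partial^s Q_n = \wh P_{n-s}^{(\a,0)}$, and the boundary conditions \eqref{eq:CJ1b}, i.e.\ $\partial^k Q_n(-1) = 0$ for $0 \le k \le s-1$. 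Since the map $f \mapsto \int_{-1}^t \frac{(t-u)^{s-1}}{(s-1)!} f(u)\,\d u$ is precisely the $s$-fold antiderivative vanishing to order $s$ at $-1$, any polynomial satisfying these two properties must equal $J_n^{(\a,-s)}$ (this is how \eqref{eq:CJ1a}--\eqref{eq:CJ1b} were derived, and the antiderivative is unique), so checking them suffices.

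The boundary conditions $\partial^k Q_n(-1) = 0$ for $k \le s-1$ are immediate: $Q_n$ has a factor $(1+t)^s$, so every derivative of order less than $s$ still carries a positive power of $(1+t)$ and vanishes at $t = -1$. The substantive step is the derivative identity $\partial^s\big[(1+t)^s \wh P_{n-s}^{(\a,s)}(t)\big] = \frac{n!}{(n-s)!} \wh P_{n-s}^{(\a,0)}(t)$. For this I would use the classical Jacobi differentiation formula in the form
\begin{equation*}
  \frac{\d^s}{\d t^s}\Big[(1+t)^{\b+s} P_m^{(\a,\b+s)}(t)\Big]
  = \frac{(m+1)_s \, (?)}{?}\,(1+t)^{\b} P_{m+s}^{(\a,\b)}(t),
\end{equation*}
which is \cite[(4.10.1)]{Sz} (or its Rodrigues-type companion \cite[(4.3.2)]{Sz}); with $\b = 0$, $m = n-s$ this becomes a relation between $(1+t)^s P_{n-s}^{(\a,s)}$ and $P_n^{(\a,0)}$. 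Translating into the normalized polynomials $\wh P$ via \eqref{eq:JacobiJ} is then bookkeeping: one tracks the ratio $A_{n-s}^{(\a,s)}/A_n^{(\a,0)}$ against the Pochhammer constant coming out of the Szeg\H{o} formula and checks it collapses to $\frac{n!}{(n-s)!}$. Alternatively, and perhaps more cleanly, I would prove $\partial^s Q_n = \wh P_{n-s}^{(\a,0)}$ by downward induction on the number of derivatives: show $\partial\big[(1+t)^{j}\wh P_{m}^{(\a,j)}(t)\big] = (m+j)\,(1+t)^{j-1}\wh P_{m}^{(\a,j-1)}(t)$ using the contiguous relation that raises the index $m$ by one when lowering $\b$, combined with \eqref{eq:diffJ}; iterating $s$ times from $j = s$, $m = n-s$ gives the factor $\prod_{i=1}^{s}(n-s+i) = n!/(n-s)!$ and lands on $\wh P_{n-s}^{(\a,0)}$.

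Once \eqref{eq:Jn(a,-s)} is in hand, formula \eqref{eq:Jn(a,-s)=Pn} follows by substituting \eqref{eq:Jacobi_n<0}, which reads $(1+t)^s P_{n-s}^{(\a,s)}(t) = \frac{2^s(-n)_s}{(-\a-n)_s} P_n^{(\a,-s)}(t)$; combined with $\wh P_{n-s}^{(\a,s)} = A_{n-s}^{(\a,s)} P_{n-s}^{(\a,s)}$ and $(-n)_s = (-1)^s n!/(n-s)!$, the factorials cancel and the stated constant $\frac{(-1)^s 2^s}{(-\a-n)_s} A_{n-s}^{(\a,s)}$ drops out. I expect the main obstacle to be purely clerical: getting every normalization constant and Pochhammer-symbol sign right in the derivative identity, since \cite{Sz} states these formulas for the unnormalized $P_n^{(\a,\b)}$ and for $(1-t)$-type factors, so one must first apply the reflection $P_n^{(\a,\b)}(-t) = (-1)^n P_n^{(\b,\a)}(t)$ — exactly as the excerpt already does to derive \eqref{eq:Jacobi_n<0} — before the bookkeeping goes through.
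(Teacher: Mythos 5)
Your strategy is genuinely different from the paper's, and it is viable. The paper proves \eqref{eq:Jn(a,-s)} by direct computation: after the reflection $P_n^{(\a,\b)}(t)=(-1)^nP_n^{(\b,\a)}(-t)$ it expands the integrand of \eqref{eq:CJ} as a hypergeometric series in $(1+u)/2$, integrates term by term against the kernel $(t-u)^{s-1}/(s-1)!$, and resums the result into a constant times $(1+t)^sP_{n-s}^{(\a,s)}(t)$. You instead verify that the candidate right-hand side has the correct $s$-th derivative and vanishes to order $s$ at $t=-1$, and conclude by uniqueness of the $s$-fold antiderivative; that uniqueness argument is sound, the boundary conditions are indeed immediate from the factor $(1+t)^s$, and your derivation of \eqref{eq:Jn(a,-s)=Pn} from \eqref{eq:Jacobi_n<0} is exactly the paper's. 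Your route trades the series resummation for one classical differentiation formula, which is arguably cleaner.

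There is, however, a concrete parameter error you must repair before the verification closes. In \eqref{eq:CJ} the polynomial $J_n^{(\a-s,\b-s)}$ is built from $\wh P_{n-s}^{(\a,\b)}$, so $J_n^{(\a,-s)}$ corresponds to $\b=0$ and first parameter $\a+s$; hence \eqref{eq:CJ1a} gives $\partial^s J_n^{(\a,-s)}=\wh P_{n-s}^{(\a+s,0)}$, not $\wh P_{n-s}^{(\a,0)}$ as you state. Correspondingly, the differentiation formula you quote cannot be right as written: with $\b=0$ the left-hand side $\partial^s\bigl[(1+t)^sP_m^{(\a,s)}(t)\bigr]$ has degree $m$, so it is not a multiple of $P_{m+s}^{(\a,\b)}$. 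The correct relation keeps the degree and raises the first parameter,
\[
  \frac{\d}{\d t}\left[(1+t)^{j} P_m^{(\a,j)}(t)\right]=(m+j)\,(1+t)^{j-1}P_m^{(\a+1,j-1)}(t),
\]
and since $A_m^{(\a,j)}$ in \eqref{eq:JacobiJ} depends only on $\a+j$, the same identity holds for $\wh P$ with $\wh P_m^{(\a+1,j-1)}$ on the right. Iterating from $j=s$, $m=n-s$ produces exactly the factor $n!/(n-s)!$ you computed and lands on $\wh P_{n-s}^{(\a+s,0)}$, which is the corrected target; so your two slips offset, the argument goes through, and the constants in \eqref{eq:Jn(a,-s)} and \eqref{eq:Jn(a,-s)=Pn} come out as stated.
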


\begin{proof}
We use the hypergeometric expression of the Jacobi polynomials,
$$
P_n^{(\a,\b)}(t) = \frac{(\a+1)_n}{n!} {}_2F_1\left(\begin{matrix} -n \,\,\, n+\a+\b+1\\ \a+1 \end{matrix}; \frac{1-t}{2} \right)
$$
and the fact that $P_n^{(\a,\b)}(t) = (-1)^n P_n^{(\b,\a)}(-t)$. It follows that
\begin{align*}
 &  \int_{-1}^t \frac{(t-u)^{s-1}}{(s-1)!} P_n^{(\a+s,0)}(u) \d u
   = (-1)^n  \int_{-1}^t \frac{(t-u)^{s-1}}{(s-1)!} P_n^{(0, \a+s)}(-u) \d u \\
  & \qquad\quad = (-1)^n \sum_{k=0}^n \frac{(-n)_k (n+\a+s+1)_k}{k! k!}
   \int_{-1}^t \frac{(t-u)^{s-1}}{(s-1)!} \left(\frac{1+u}{2} \right)^k \d u \\
  & \qquad\quad = \frac{(-1)^n}{s!} (1+t)^s \sum_{k=0}^n \frac{(-n)_k (n+\a+s+1)_k}{k! (s+1)_k}  \left(\frac{1+t}{2} \right)^k \\
  & \qquad\quad = \frac{(-1)^n}{s!} (1+t)^s \frac{n!}{(s+1)_n} P_n^{(s,\a)}(-t)
     =   \frac{n!}{(n+s)!}  (1+t)^sP_n^{(\a,s)}(t).
 \end{align*}
Replacing $n$ by $n-s$ and using $A_n^{(\a,s)} = A_n^{(\a+s,0)}$, the identity \eqref{eq:Jn(a,-s)} then follows from
\eqref{eq:CJ}. The second identity follows from the identity \eqref{eq:Jacobi_n<0}.
\end{proof}

It should be pointed out that the integral expression of $J_n^{(\a,-s)}$ for $n \ge s$ in \eqref{eq:CJ} is more
convenient for studying the Fourier orthogonal series, as shown in \cite{X19} and as we shall see in
Section 4 below.

\subsection{Spherical harmonics}
A homogeneous polynomial $Y$ of $d$ variables is called a solid harmonic if $\Delta Y =0$, where $\Delta$
is the Laplace operator on $\RR^d$. We denote by $\CH_m^{d,0}$ the space of homogeneous solid
harmonics of degree $m$ in $d$ variables. Thus, if $Y \in \CH_m^{d,0}$, then $Y(r \xi) = r^m Y(\xi)$ for
$\xi \in \sph$. Spherical harmonics are restrictions of solid harmonics on the unit ball. We denote the
space of spherical harmonics of degree $m$ by $\CH_m^d$. Thus, $\CH_m^d = \CH_m^{d,0} \vert_\sph$.
It is a common practice to identify $\CH_m^{d,0}$ and $\CH_m^d$, we distinguish them to emphasize the
dependence on variables for the orthogonal polynomials on the conic surface. It is well known that
$$
   \dim \CH_m^d =  \binom{m+d-2}{n}+\binom{m+d-3}{n-1},\quad m=1,2,3,\ldots,
$$
and spherical harmonics of different degrees are orthogonal with respect to the surface measure on the
unit sphere. Throughout the paper, we denote by $\{Y_\ell^m: 1 \le \ell \le \dim \CH_m^d\}$ an orthonormal
basis of $\CH_m^d$, so that
$$
  \frac{1}{\o_d} \int_\sph Y_\ell^n(\xi) Y_{\ell'}^m(\xi) \d\s (\xi) = \delta_{\ell,\ell'} \delta_{n,m},
$$
where $\d \s$ is the surface measure of $\sph$ and $\o_d$ is the surface area of $\sph$.

Let $\proj_n^\sph: L^2(\sph) \rightarrow \CH_n^d$ be the orthogonal projection operator from $L^2(\sph)$
onto $\CH_n^d$. If $\{Y_\ell^n: 1 \le \ell \le \dim \CH_n^{d-1} \}$ is an orthonormal basis of $\CH_n^d$, then
$$
  \proj_n^\sph f (x) = \sum_{\ell =1}^{\dim \CH_n^d} \hat f_{\ell}^n Y_\ell^n, \quad
   \hat f_\ell^n = \frac{1}{\o_d} \int_{\sph} f(\xi) Y_\ell^n(\xi) \d\s(\xi).
$$
For $f \in L^2(\sph)$, its Fourier expansion in spherical harmonics is defined by
$$
  f =  \sum_{n=0}^\infty \sum_{\ell =1}^{\dim \CH_n^d} \hat f_{\ell}^n Y_\ell^n =\sum_{n=0}^\infty  \proj_n^\sph f.
$$

Let $\Delta_0$ be the Laplace-Beltrami operator, which is the restriction of the Laplacian $\Delta$ on
the unit sphere. Under the spherical polar coordinates $x = r \xi$, $r > 0$, $\xi \in \sph$,
$$
  \Delta = \frac{\d^2}{\d r^2} + \frac{d-1}{r}  \frac{\d}{\d r} + \frac{1}{r^2} \Delta_0.
$$
The spherical harmonics are the eigenfunctions of $\Delta_0$. More precisely,
\begin{equation}\label{eq:LB-operator}
   \Delta_0 Y = - n (n+d-2)Y, \qquad Y \in \CH_n^d, \quad n=0,1,2,\ldots.
\end{equation}
The $\Delta_0$ is a second-order differential operator on the unit sphere. One can also consider
spherical gradient $\nabla_0$, the first-order differential operators, on the sphere, which is defined by
\begin{equation}\label{eq:nabla0}
     \nabla = \frac{1}{r} \nabla_0 + \xi \frac{\d}{\d r}, \quad x = r \xi, \quad \xi \in \sph.
\end{equation}
The integration by parts formula holds and gives \cite[(1.8.14)]{DaiX},
\begin{equation}\label{eq:int-sph}
   \int_{\sph} \nabla_0 f(\xi) \cdot  \nabla_0 g(\xi) \d \s(\xi) = - \int_{\sph} \Delta_0 f(\xi) g(\xi) \d \s(\xi).
\end{equation}
Together with \eqref{eq:LB-operator}, this identity implies that if $\{Y_\ell^n : 1 \le \ell \le \dim \CH_n^d\}$ is
an orthogonal basis of $\CH_n^d$, then
$$
 \int_{\sph} \nabla_0 Y_\ell^n(\xi) \cdot  \nabla_0 Y_{\ell'}^{n'}(\xi) \d \s(\xi)
     =\l_n \int_{\sph} Y_\ell^n(\xi) \cdot Y_{\ell'}^{n'}(\xi) \d \s(\xi) = \l_n \delta_{\ell,\ell'} \delta_{n,n'},
$$
where $\l_n = n(n+d-2)$, which implies that $\{Y_\ell^n : 1 \le \ell \le \dim \CH_n^d\}$ is also a family
of orthogonal polynomials for the Sobolev inner product defined by, for example,
\begin{equation}\label{eq:SOP-sph}
 \la f,g \ra_{\nabla} = \sum_{k=1}^s \mu_k
 \f 1 {\o_d} \int_{\sph} \nabla_0^k f(\xi) \cdot  \nabla_0^k g(\xi) \d \s(\xi) +  \f 1 {\o_d} \int_{\sph}   f(\xi) g(\xi) \d \s(\xi),
\end{equation}
where $\mu_k \ge 0$ and $s$ is a fixed positive integer, and
$$
   \nabla_0^{2m} := \Delta_0^m \quad\hbox{and} \quad \nabla_0^{2m+1} = \Delta_0^m \nabla_0.
$$
 In other words, the Sobolev orthogonal polynomials for $\la \cdot,\cdot\ra_\nabla$ on the unit sphere are trivially
 spherical harmonics themselves.
 
\section{Orthogonal polynomials on the conic surface}
\setcounter{equation}{0}

Orthogonal polynomials on the conic surface $\VV_0^{d+1}$ are studied in \cite{X20} for the inner product
defined by
\begin{equation}\label{ConeInnerProduct}
  \la f, g \ra_{\b,\g} = b_{\b,\g} \int_{\VV_0^{d+1}} f(x,t) g(x,t) \sw_{\b,\g}(t) \d \sm (x,t),
\end{equation}
where $\d \sm$ is the Lebesgue measure on the conic surface and the weight function $\sw_{\b,\g}$ is the
Jacobi weight function on $[0,1]$,
$$
\sw_{\b,\g}(t) = t^\b (1-t)^\g, \quad \b > -d, \quad \g > -1
$$
and $b_{\b,\g}$ is the normalization constant so that $\la 1, 1 \ra_{\b,\g} =1$, which is determined by
$$
 \int_{\VV_0^{d+1}} f(x,t) \d \sm(x,t) = \int_{0}^1 t^{d-1}  \int_{\sph} f(t \xi,t)\d \s(\xi) \d t,
$$
where $\d \s$ denotes the Lebesgue measure on the unit sphere $\sph$. Then,
$$
b_{\beta,\gamma}= \frac{1}{\omega_d} c_{\b+d-1,\g} \quad \hbox{with}\quad
c_{\b,\g} = \frac{\Gamma(\beta+\gamma+2)}{\Gamma(\beta+1)\Gamma(\gamma+1)} \quad \hbox{and}\quad
\omega_{d}=\frac{2\pi^{d/2}}{\Gamma(d/2)},
$$
where $\omega_{d}$ is the surface area of $\sph$. The inner product is well defined
for the space $\RR[x,t] / \la t^2 - \|x\|^2\ra$ of polynomials modulus the ideal generated by $t^2 -\|x\|^2$.

For $n \in \NN_0$, let $\CV_n^d(\sw_{\b,\g})$ denote the space of orthogonal polynomials of degree $n$. Since
$\VV_0^{d+1}$ is a quadratic surface, so the dimension of the space is the same as that of $\CH_n^{d+1}$.
Thus, $\dim \CV_0(\VV_0^{d+1},\sw_{\b,\g}) =1$ and
$$
   \dim \CV_n^d(\sw_{\b,\g})  = \binom{n+d-1}{n}+\binom{n+d-2}{n-1},\quad n=1,2,3,\ldots.
$$
An orthogonal basis for $\CV_n^d(\sw_{\b,\g})$ is given in \cite{X20} in terms of the Jacobi polynomials and
spherical harmonics. Let $\{Y_\ell^m: 1 \le \ell \le \dim \CH_m^d\}$ be an orthonormal basis of $\CH_m^d$.
Define the polynomials, called the Jacobi polynomials on the conic surface, by
\begin{equation} \label{eq:sfOPbasis}
  \sS_{m, \ell}^{n,(\b,\g)} (x,t) := P_{n-m}^{(2m + \b + d-1,\g)} (1-2t) Y_\ell^m (x),
\end{equation}
where, for $(x,t) \in \VV_0^{d+1}$, $Y_\ell^m(x)$ is a solid harmonic in $\CH_m^{d,0}$ and
$Y_\ell^m (t\xi) = t^m Y_\ell^m(\xi)$. Then
$\{ \sS_{m, \ell}^{n,(\b,\g)}: 0 \le m \le n, \,\, 1 \le \ell \le \dim \CH_m(\sph)\}$ is an orthogonal basis of
$\CV_n^d(\sw_{\b,\g})$, which satisfies
$$
 b_{\b,\g} \int_{\VV_0^{d+1}}  \sS_{m, \ell}^{n,(\b,\g)}  (x,t) \sS_{m', \ell'}^{n',(\b,\g)} (x,t)
     \sw_{\b,\g}(t)  \d \sm (x,t) =  h_{n,m}^{\b,\g}\delta_{n,n'}\delta_{m,m'}\delta_{\ell,\ell'},
$$
where $h_{n,m}^{\b,\g}$ is the square of the $L^2(\VV_0^{d+1},\sw_{\b,\g})$ norm of $\sS_{m, \ell}^n$ and
\begin{equation}\label{eq:Snorm}
   h_{n,m}^{\b,\g} = \frac{(\b+d)_{2m}}{(\b+\g+d+1)_{2m}} h_{n-m}^{(2m+\b+d-1,\g)}
\end{equation}
with $h_{n-m}^{(2m+\b+d-1,\g)}$ defined in \eqref{JacobiNorm}.

Of particular interest is the case $\b = -1$, for which the space $\CV_n^d(\sw_{\b,\g})$ is an eigenspace
of a second order linear differential operator. Parametrizing the space $\VV_0^{d+1}$ by $(x,t) = (t\xi, t)$
and let $\Delta_0^{(\xi)}$ be the Laplace-Beltrami operator on the unit sphere in the $\xi$ variable, which
is the restriction of the Laplace operator $\Delta$ on $\sph$. For $\g > -1$, define
\begin{equation} \label{eq:diff-op}
   \CD_\g =  t(1-t)\frac{\d^2}{\d t^2} + \big( d-1 - (d+\g)t \big) \frac{\d}{\d t}+ t^{-1} \Delta_0^{(\xi)}.
\end{equation}

\begin{thm}\label{thm:Jacobi-DE-V0}
Let $d\ge 2$ and $\g > -1$. The orthogonal polynomials in $\CV_n(\VV_0^{d+1}, \sw_{-1,\g})$ are eigenfunctions
of $\CD_{\g}$; more precisely,
\begin{equation}\label{eq:eigen-eqn}
    \CD_{\g} u =  -n (n+\g+d-1) u, \qquad \forall u \in \CV_n(\VV_0^{d+1}, \sw_{-1,\g}).
\end{equation}
\end{thm}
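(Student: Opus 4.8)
The plan is to verify the eigenvalue identity directly on the explicit orthogonal basis. Since $\CD_\g$ is linear and $\{\sS_{m,\ell}^{n,(-1,\g)}: 0\le m\le n,\ 1\le \ell\le \dim\CH_m^d\}$ is a basis of $\CV_n(\VV_0^{d+1},\sw_{-1,\g})$ (as recalled in Section 3, with $\b=-1>-d$ since $d\ge 2$), it suffices to prove \eqref{eq:eigen-eqn} for $u=\sS_{m,\ell}^{n,(-1,\g)}$. Parametrizing $(x,t)=(t\xi,t)$ and using that $Y_\ell^m$ is a solid harmonic with $Y_\ell^m(t\xi)=t^m Y_\ell^m(\xi)$, we may write $u=t^m\,p(t)\,Y_\ell^m(\xi)$ with $p(t):=P_{n-m}^{(2m+d-2,\g)}(1-2t)$; here we used $\b=-1$, so the first Jacobi parameter in \eqref{eq:sfOPbasis} is $2m+\b+d-1=2m+d-2$.

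Next I split $\CD_\g$ into its spherical part $t^{-1}\Delta_0^{(\xi)}$ and its radial part. The spherical part acts only on $Y_\ell^m(\xi)$, so by \eqref{eq:LB-operator} it contributes $-m(m+d-2)\,t^{-1}u$. Setting $q(t):=t^m p(t)$ and substituting into the radial part of $\CD_\g$, a short computation tracking the powers $t^{m-1},t^m,t^{m+1},t^{m+2}$ shows that the potentially singular term proportional to $t^{m-1}p(t)$ has coefficient $m(m-1)+(d-1)m-m(m+d-2)=0$. Hence $\CD_\g u$ is genuinely a polynomial, and after dividing the full expression by $t^m Y_\ell^m(\xi)$ one is left with the second-order operator
\[
  t(1-t)\,p''(t)+\big[(2m+d-1)-(2m+d+\g)t\big]\,p'(t)-m(m+\g+d-1)\,p(t).
\]

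Finally I identify this operator. Under the substitution $y=1-2t$ (so that $t(1-t)=\tfrac14(1-y^2)$, $\tfrac{d}{dt}=-2\tfrac{d}{dy}$), the first two terms become exactly the first two terms of the Jacobi differential equation satisfied by $P_{n-m}^{(2m+d-2,\g)}(y)$; by that equation they equal $-(n-m)(n+m+\g+d-1)\,p(t)$. Adding the remaining term, the total eigenvalue is
\[
  -(n-m)(n+m+\g+d-1)-m(m+\g+d-1)=-n(n+\g+d-1),
\]
which is an elementary identity (with $c:=\g+d-1$ one has $(n-m)(n+m+c)+m(m+c)=n(n+c)$). This gives $\CD_\g u=-n(n+\g+d-1)u$ on every basis element, and the theorem follows.

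I expect the only point requiring genuine care is the radial substitution $q=t^m p$: keeping the bookkeeping of the four powers of $t$ straight, and then matching the reduced radial equation to the Jacobi equation with the precise parameters $(2m+d-2,\g)$ and index $n-m$. The cancellation of the $t^{m-1}$ coefficient is the one step that is not purely formal — it is exactly what makes $\CD_\g$ map polynomials to polynomials, reflecting that $Y_\ell^m$ is harmonic and that $\CD_\g$ descends to $\RR[x,t]/\la t^2-\|x\|^2\ra$.
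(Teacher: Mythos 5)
Your verification is correct: the reduction of $\CD_\g$ applied to $u=t^m p(t)Y_\ell^m(\xi)$ is exactly as you compute (the coefficient of $t^{m-1}p$ is $m(m-1)+(d-1)m-m(m+d-2)=0$, reflecting harmonicity of $Y_\ell^m$), the surviving radial operator is $t(1-t)p''+[(2m+d-1)-(2m+d+\g)t]p'-m(m+\g+d-1)p$, and under $y=1-2t$ this matches the Jacobi equation for $P_{n-m}^{(2m+d-2,\g)}$, so the eigenvalues add to $-n(n+\g+d-1)$. However, your route differs from the paper's. The paper does not prove Theorem~\ref{thm:Jacobi-DE-V0} in Section~3 (it cites \cite{X20}); its own derivation comes in Section~5, where Theorem~\ref{thm:Z} treats arbitrary real $\g$: there one writes the radial factor as $p(t)=\sum_i a_{j,i}(1-t)^i$, uses the commutation identity \eqref{eq:D_gamma} for $\CD_\g[(1-t)^k p\,Y]$ together with \eqref{eq:Delta0Y}, and solves the resulting coefficient recurrence, identifying $p$ as a terminating ${}_2F_1$, i.e.\ the Jacobi polynomial $P_j^{(2n-2j+d-2,\g)}(1-2t)$. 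That argument buys more than your direct check: it shows $p$ is the \emph{only} radial factor making $p(t)Y(x)$ an eigenfunction (under a nondegeneracy condition), and it is valid for $\g=-s$, which is what Section~5 needs for the Sobolev case. Your verification is shorter and more elementary — it needs only the classical Jacobi ODE and the Laplace–Beltrami eigenvalue \eqref{eq:LB-operator} — but it establishes only the existence direction for $\g>-1$, which is all the stated theorem requires.
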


The differential operator $\CD_{\g}$ plays an important role in the study of the Fourier orthogonal series, 
and the best approximation by polynomials on the conic surface \cite{X21}. For $f \in L^2(\VV_0^{d+1}, \sw_{\b,\g})$,
the Fourier orthogonal series of $f$ is defined by
$$
  f = \sum_{n=0}^\infty \proj_n^{\b,\g} f,
$$
where $\proj_n^{\b,\g}: L^2(\VV_0^{d+1}, \sw_{\b,\g}) \rightarrow \CV_n^d(\sw_{\b,\g})$ is the orthogonal
projection operator, which satisfies, using the orthogonal basis given above,
$$
  \proj_n^{\b,\g} f = \sum_{m=0}^n \sum_{\ell=1}^{\dim \CH_m(\sph)}\hat f_{m,\ell}^{n, (\b,\g)} \sS_{m,\ell}^{n,(\b,\g)},
  \quad \hbox{where}\quad \hat f_{m,\ell}^{n, (\b,\g)} := \frac{\la f,  \sS_{m, \ell}^{n,(\b,\g)}\ra_{\b,\g}}{h_{m,n}^{\b,\g}}.
$$

In the rest of this section, we consider a property of the derivative of the projection operator. We need to be careful with
derivative for functions on the conic surface. A function on $\VV_0^{d+1}$ can be written as $f(x,t) = f(t \xi, t)$ with $\xi \in \sph$, 
so that $\frac{\d}{\d t}$ acts on both $x$ and $t$ variables, and it follows that
\begin{equation} \label{eq:partial_t}
  \frac{\d}{\d t} f(x,t)  =  \la \xi, \nabla_x\ra f(t\xi,t) + \partial_{d+1} f(t\xi,t) = \frac{1}{t} \la x, \nabla_x\ra  f(x,t) 
    +  \partial_{d+1} f(x,t)
\end{equation}
where $\partial_{d+1}$ denotes the partial derivative with respect to the $d+1$ variable of $f$. For our study, it is 
essential to consider the differential operator, acting on $f(x,t)$ on $\VV_0^{d+1}$, defined by
%$$
%   \partial_t = \partial_{d+1} = \frac{\partial}{\partial t}
%$$
\begin{equation} \label{eq:fD}
   \fD =  \frac{\d}{\d t} - \frac{1}{t}  \la x, \nabla_x\ra, %=  \frac{\d}{\d t} -  \la \xi, \nabla_x\ra 
\end{equation}
which will be used throughout the rest of the paper. Geometrically, $\fD$ is the derivative in the normal direction to 
the cone $\VV_0^{d+1}$. By \eqref{eq:partial_t}, it follows immediately that
\begin{equation} \label{eq:fD0}
  \fD f(x,t) = \partial_{d+1} f(x,t). 
\end{equation}
To be more specific and further clarify the role of $\fD$, we state the following lemma. 

\begin{lem} \label{lem:fD}
For $(x,t) \in \VV_0^{d+1})$, let $p(t)$ and $q(x)$ be differentiable functions. Then 
\begin{align} \label{eq:fD1}
     \fD \left[p(t) q(x)\right] = p'(t) q(x). 
\end{align}
In particular, let $ \tau_{m,n} = - (n+m + \b+ \g + d)$, then
\begin{align} \label{eq:fD_basis}
     \fD \sS_{m,\ell}^{n,(\b,\g)}(x,t) & = \frac{\d}{\d t} \left[P_{n-m}^{(2m + \b + d-1,\g)} (1-2t)\right] Y_\ell^m(x) \\ 
       & =  \tau_{n,m} \sS_{m,\ell}^{n-1,(\b+1,\g+1)}(x,t), \qquad 0 \le m \le n-1,  \notag
\end{align}
and, for $m = n$, $\fD \sS_{n,\ell}^{n,(\b,\g)}(x,t) = 0$. 
\end{lem}

\begin{proof}
The identity \eqref{eq:fD1} is an immediate consequence of \eqref{eq:fD0}. Moreover, \eqref{eq:fD_basis} follows from the well-known 
formula for the derivative of the Jacobi polynomials, as
\begin{align*}
 \frac{\d}{\d t} \left[ P_{n-m}^{(2m + \b + d-1,\g)} (1-2t)\right] Y_\ell^m (\xi) 
   & = \tau_{n,m} P_{n-m-1}^{(2m + \b + d,\g+1)} (1-2t) Y_\ell^m (x) \\& = \tau_{n,m} \sS_{m,\ell}^{n-1,(\b+1,\g+1)}(x,t),
\end{align*}
which also holds for $m=n$, since the above derivative is zero when $n-m=0$.
\end{proof}

\begin{thm} \label{thm:partialP}
For $\b > -d$ and $\g > -1$, let $f$ be a differentiable function such that $f\in L^2(\VV_0^{d+1}, \sw_{\b,\g})$
and $\fD f \in  L^2(\VV_0^{d+1}, \sw_{\b+1,\g+1})$. Then, for $n =0,1,2,\ldots$,
$$
   \fD \proj_n^{\b,\g} f(x,t) = \proj_{n-1}^{\b+1,\g+1} \partial_t f (x,t).
$$
\end{thm}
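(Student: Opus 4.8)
The plan is to verify the identity by testing it against the orthogonal basis $\{\sS_{m,\ell}^{n,(\b,\g)}\}$ and using the fact that the $t$-derivative sends Jacobi polynomials to Jacobi polynomials with shifted parameters. First I would reduce to the case $f = \sS_{m,\ell}^{n,(\b,\g)}$: since both sides of the claimed identity are continuous linear operators from the stated Sobolev-type space into $L^2(\VV_0^{d+1},\sw_{\b+1,\g+1})$ (the right-hand side needs $\partial_t f$ in that space by hypothesis; the left-hand side is $\partial_t$ of a finite-dimensional projection, hence bounded on the same data), it suffices to check it on the orthogonal basis. For $f = \sS_{m,\ell}^{n,(\b,\g)}$ we have $\proj_k^{\b,\g} f = \delta_{k,n} f$, so the left side is just $\partial_t \sS_{m,\ell}^{n,(\b,\g)}$ and we must show this equals $\proj_{n-1}^{\b+1,\g+1}\partial_t \sS_{m,\ell}^{n,(\b,\g)}$, i.e. that $\partial_t \sS_{m,\ell}^{n,(\b,\g)}$ already lies in $\CV_{n-1}^d(\sw_{\b+1,\g+1})$.

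The second step is the explicit computation of $\partial_t \sS_{m,\ell}^{n,(\b,\g)}$. Recall $\sS_{m,\ell}^{n,(\b,\g)}(x,t) = P_{n-m}^{(2m+\b+d-1,\g)}(1-2t)\,Y_\ell^m(x)$ where $Y_\ell^m(x)$ is a solid harmonic, homogeneous of degree $m$. Applying $\partial_t = \partial_{d+1}$ — the partial derivative in the last coordinate only, which does not touch $x$ — we get $\partial_t \sS_{m,\ell}^{n,(\b,\g)}(x,t) = -2 \big(P_{n-m}^{(2m+\b+d-1,\g)}\big)'(1-2t)\,Y_\ell^m(x)$. Using the classical derivative formula $\frac{d}{dt}P_k^{(\a,\b)}(t) = \tfrac{1}{2}(k+\a+\b+1)P_{k-1}^{(\a+1,\b+1)}(t)$ (equivalently \eqref{eq:diffJ} for the renormalized polynomials), this becomes a constant multiple of $P_{n-1-m}^{(2m+\b+d,\g+1)}(1-2t)\,Y_\ell^m(x)$. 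Now observe that $2m+\b+d = 2m+(\b+1)+d-1$, so this is precisely a constant multiple of $\sS_{m,\ell}^{n-1,(\b+1,\g+1)}(x,t)$, which is an element of $\CV_{n-1}^d(\sw_{\b+1,\g+1})$. Hence $\partial_t \sS_{m,\ell}^{n,(\b,\g)} \in \CV_{n-1}^d(\sw_{\b+1,\g+1})$ and is fixed by $\proj_{n-1}^{\b+1,\g+1}$, giving the identity on basis elements. (For $m = n$ the Jacobi factor is constant, its derivative vanishes, and both sides are $0$, consistent with $\proj_{-1} = 0$.)

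The final step is to make the density/continuity argument rigorous so that passing from the basis to general $f$ is legitimate. The subtlety — and the main obstacle — is that the hypothesis controls $\partial_t f$ in $L^2(\sw_{\b+1,\g+1})$, a weight that vanishes more strongly at $t=0$ and $t=1$, so one should argue directly rather than by naive $L^2(\sw_{\b,\g})$-density. I would expand $f = \sum_{n,m,\ell}\hat f_{m,\ell}^{n,(\b,\g)}\sS_{m,\ell}^{n,(\b,\g)}$ in $L^2(\VV_0^{d+1},\sw_{\b,\g})$, apply $\partial_t$ term by term using the computation above to obtain $\partial_t f = \sum c_{n,m,\ell}\,\hat f_{m,\ell}^{n,(\b,\g)}\,\sS_{m,\ell}^{n-1,(\b+1,\g+1)}$, and recognize the right-hand side as precisely the Fourier expansion of $\partial_t f$ in $L^2(\VV_0^{d+1},\sw_{\b+1,\g+1})$; comparing the degree-$(n-1)$ components of both expansions yields $\proj_{n-1}^{\b+1,\g+1}\partial_t f = \partial_t \proj_n^{\b,\g} f$. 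The one point needing care is justifying that termwise differentiation of the series is valid in $L^2(\sw_{\b+1,\g+1})$ — this follows because $\partial_t f \in L^2(\VV_0^{d+1},\sw_{\b+1,\g+1})$ by assumption, so its Fourier series in the basis $\{\sS_{m,\ell}^{k,(\b+1,\g+1)}\}$ converges, and matching coefficients with the differentiated series (whose coefficients are forced by the basiswise identity) identifies the two. Alternatively, one can bypass convergence issues entirely by testing: for any orthogonal basis element $\sS_{m,\ell}^{n-1,(\b+1,\g+1)}$, integrate $\partial_t f$ against it with weight $\sw_{\b+1,\g+1}$ and integrate by parts in $t$ (the boundary terms at $t=0$ and $t=1$ vanish because $\sw_{\b+1,\g+1}$ carries the extra factor $t(1-t)$), reducing the pairing to one involving $f$, $\partial_t\big(\sw_{\b+1,\g+1}\sS_{m,\ell}^{n-1,(\b+1,\g+1)}\big)$ and $\sw_{\b,\g}$; expressing the latter via the Jacobi differential relations shows it is (a multiple of) $\sw_{\b,\g}\sS_{m,\ell}^{n,(\b,\g)}$, so the pairing picks out exactly $\hat f_{m,\ell}^{n,(\b,\g)}$, which is the Fourier coefficient governing $\partial_t\proj_n^{\b,\g}f$. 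Either route completes the proof.
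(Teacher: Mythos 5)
Your main line is essentially the paper's own argument: differentiate the basis elements, using the Jacobi derivative formula to get $\partial_t \sS_{m,\ell}^{n,(\b,\g)} = -(n+m+\b+\g+d)\,\sS_{m,\ell}^{n-1,(\b+1,\g+1)}$ (and $0$ when $m=n$), conclude that $\partial_t \proj_n^{\b,\g} f$ lies in $\CV_{n-1}^d(\sw_{\b+1,\g+1})$, and then identify the degree-$(n-1)$ Fourier coefficients of $\partial_t f$ in $L^2(\sw_{\b+1,\g+1})$ with $\tau_{n,m}\hat f_{m,\ell}^{n,(\b,\g)}$. The paper does exactly this, passing to general $f$ by asserting $\la \partial_t f, \sS_{m,\ell}^{n-1,(\b+1,\g+1)}\ra_{\b+1,\g+1} = \la \partial_t \proj_n^{\b,\g} f, \sS_{m,\ell}^{n-1,(\b+1,\g+1)}\ra_{\b+1,\g+1}$; your coefficient-matching route is that same step in different words, and your appeal to density of polynomials in the graph norm (paragraph 1 and Route A) is asserted rather than proved, though it is no bigger a leap than the one the paper itself takes at that point.

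The concrete problem is your ``alternative'' integration-by-parts route, which does not work as stated. On the cone, $\partial_t$ means the partial derivative $\partial_{d+1}$ with $x$ held fixed; it is not the derivative along the parametrization $t\mapsto (t\xi,t)$. As the paper recalls, $\frac{\d}{\d t} f(t\xi,t) = \xi\cdot\nabla_x f(t\xi,t) + \partial_t f(t\xi,t)$, so when you write the inner product in parametrized form and integrate by parts in the $t$ of the parametrization, what you can move off $f$ is the tangential derivative $\frac{\d}{\d t}$, not $\partial_t$; an extra term involving $\xi\cdot\nabla_x f$ (a derivative transversal to the cone, not controlled by your hypotheses) is left over. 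The vanishing of boundary terms is not the issue; the issue is that the pairing does not reduce, as you claim, to a multiple of $\la f, \sS_{m,\ell}^{n,(\b,\g)}\ra_{\b,\g}$ via the Jacobi weight relations alone. So ``either route completes the proof'' is not accurate: only the coefficient-matching route (the paper's) goes through, and if you want full rigor beyond the paper's level you should either prove the polynomial-density statement in the graph norm or justify the displayed interchange $\la \partial_t f,\cdot\ra = \la \partial_t\proj_n^{\b,\g} f,\cdot\ra$ directly.
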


\begin{proof}
By Lemma \ref{lem:fD}, $\fD \proj_n^{\b,\g} f \in \CV_{n-1}^d(\sw_{\b+1,\g+1})$. Consequently, it follows that
\begin{align*}
& \left \langle  \fD f, \sS_{m,\ell}^{n-1,(\b+1,\g+1)}\right \rangle_{\b+1,\g+1}  =
  \left \langle \fD \proj_n^{\b,\g} f,  \sS_{m,\ell}^{n-1,(\b+1,\g+1)} \right \rangle_{\b+1,\g+1}  \\
  & \qquad\qquad = \sum_{k=0}^{n-1} \sum_{\nu} \hat f_{k,\nu}^{n,(\b,\g)}  \tau_{n,k}  \left \langle\sS_{k,\nu}^{n-1,(\b+1,\g+1)},
        \sS_{m,\ell}^{n-1,(\b+1,\g+1)} \right \rangle_{\b+1,\g+1}  \\
  &  \qquad\qquad = \tau_{n,m}  \hat f_{m,\ell}^{n,(\b,\g)} h_{n-1,m}^{\b+1,\g+1},
\end{align*}
which implies immediately that
$$
    \wh{\fD f}_{m,\ell}^{n-1, (\b+1,\g+1)} = \tau_{n,m} \hat f_{m,\ell}^{n,(\b,\g)}, \quad 0 \le m \le n-1.
$$
Consequently, we obtain
\begin{align*}
  \fD \proj_n^{\b,\g} f (x,t)
      \, & = \sum_{m=0}^{n-1} \sum_\ell \hat f_{m,\ell}^{n,(\b,\g)}\tau_{n,m} \sS_{m,\ell}^{n-1,(\b+1,\g+1)}(x,t) \\
       & = \sum_{m=0}^{n-1} \sum_\ell \wh{\fD f}_{m,\ell}^{n-1, (\b+1,\g+1)} \sS_{m,\ell}^{n-1,(\b+1,\g+1)}(x,t) \\
       & = \proj_{n-1}^{\b+1,\g+1} \fD f (x,t),
\end{align*}
where in the first step we used again $\fD_t \sS_{n,\ell}^n(x,t) = 0$.
\end{proof}

For $1 \le p \le \infty$, let $\|f\|_{p, \sw_{\b,\g}}$ denote the norm of the space $L^p(\VV_0^{d+1}, \sw_{\b,\g})$, and
we adopt the convention that the space is $C(\VV_0^{d+1})$ with the norm taken as the uniform norm when
$p = \infty$. Let $\Pi_n(\VV_0^{d+1})$ denote the space of polynomials of degree $n$ restricted on the
$\VV_0^{d+1}$. For $f \in L^p(\VV_0^{d+1}, \sw_{\b,\g})$, the quantity
$$
     E_n(f)_{p, \sw_{\b,\g}} := \inf_{P \in \Pi_n(\VV_0^d)} \| f - P \|_{p, \sw_{\b,\g}}
$$
is the error of the best approximation by polynomials of degree at most $n$ in the norm of $L^p(\VV_0^{d+1}, \sw_{\b,\g})$.
We call $\eta \in C^\infty$ an admissible cut-off function if it is supported on $[0,2]$ and satisfies
$\eta(t) = 1$ if $0 \le t \le 1$. Let $\eta$ be such a function; we define
\begin{equation} \label{eq:Qn-eta}
  Q_{n,\eta}^{(\b,\g)} f = \sum_{k=0}^{2n} \eta\left(\f k n \right) \proj_k^{\b,\g} f.
\end{equation}
Then it is known \cite{X21} that $Q_{n,\eta}^{(\b,\g)}$ is a bounded operator in $L^p(\VV_0^{d+1}, \sw_{\b,\g})$ and it is
a polynomial of near best approximation in the sense that
\begin{equation} \label{eq:Qn-etaEstimate}
    \left \| f -  Q_{n,\eta}^{(\b,\g)} f\right \|_{p, \sw_{\b,\g}} \le c E_n (f)_{p,\sw_{\b,\g}},
\end{equation}
where $c$ is a constant that depends only on $\eta$, $p$, $\b$ and $\g$. In particular, we obtain the following as
a corollary of Theorem \ref{thm:partialP}.

\begin{cor} \label{cor:approxQ}
Let $\b > -d$ and $\g > -1$. Let $r$ be a postive integer and let $f \in C^r(\VV_0^{d+1})$ such that
 $\fD^k f \in  L^p(\VV_0^{d+1}, \sw_{\b+k,\g+k})$ for $0 \le k \le r$.  Then, for $1 \le p \le \infty$,
$$
  \left \| \fD^k f - \fD^k Q_{n,\eta}^{(\b,\g)}f\right \|_{p, \sw_{\b+k,\g+k} } \le c_k
      E_{n-k} (\fD^k f)_{p, \sw_{\b+k,\g+k} }, \qquad 0 \le k \le r.
$$
\end{cor}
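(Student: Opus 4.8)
The plan is to combine the commutation identity of Theorem~\ref{thm:partialP} with the near-best approximation property \eqref{eq:Qn-etaEstimate}, applying the latter not to $f$ on the cone with weight $\sw_{\b,\g}$ but to the shifted data $\partial_t^k f$ on the cone with the shifted weight $\sw_{\b+k,\g+k}$. The starting point is to iterate Theorem~\ref{thm:partialP}: since $\partial_t \proj_n^{\b,\g} f = \proj_{n-1}^{\b+1,\g+1} \partial_t f$, applying this $k$ times (with the hypotheses $\partial_t^j f \in L^2(\VV_0^{d+1}, \sw_{\b+j,\g+j})$ holding along the way, which follows from $\partial_t^j f \in L^p(\VV_0^{d+1},\sw_{\b+j,\g+j})$ for $j\le r$ since the cone has finite measure) yields
\begin{equation*}
  \partial_t^k \proj_n^{\b,\g} f = \proj_{n-k}^{\b+k,\g+k} \partial_t^k f, \qquad n \ge k,
\end{equation*}
and $\partial_t^k \proj_n^{\b,\g} f = 0$ for $n < k$ because $\proj_n^{\b,\g}f$ is a polynomial of degree $n$ and the iterated commutation drops the degree by one each step (alternatively, read it off directly from the basis formula \eqref{eq:sfOPbasis}, noting $\partial_t^k$ annihilates any $\sS_{m,\ell}^{n,(\b,\g)}$ with $n - m < k$ and turns the rest into multiples of $\sS_{m,\ell}^{n-k,(\b+k,\g+k)}$).

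Next I would feed this into the definition \eqref{eq:Qn-eta} of $Q_{n,\eta}^{(\b,\g)}$. Applying $\partial_t^k$ term by term,
\begin{equation*}
  \partial_t^k Q_{n,\eta}^{(\b,\g)} f = \sum_{j=0}^{2n} \eta\Bigl(\tfrac{j}{n}\Bigr)\, \partial_t^k \proj_j^{\b,\g} f
  = \sum_{j=k}^{2n} \eta\Bigl(\tfrac{j}{n}\Bigr)\, \proj_{j-k}^{\b+k,\g+k} \partial_t^k f
  = \sum_{i=0}^{2n-k} \eta\Bigl(\tfrac{i+k}{n}\Bigr)\, \proj_{i}^{\b+k,\g+k} \partial_t^k f,
\end{equation*}
after reindexing $i = j - k$. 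The cut-off argument $\eta(\frac{i+k}{n})$ is \emph{not} exactly of the form $\tilde\eta(\frac{i}{n})$ with a fixed admissible $\tilde\eta$, so the cleanest route is: define $\eta_k(u) := \eta(u + k/n)$; this is still $C^\infty$, supported in $[0, 2 - k/n] \subset [0,2]$, and equals $1$ on $[0, 1 - k/n]$ — but the "equals $1$" interval shrinks with $n$, which is a minor nuisance. A more robust alternative I would actually use: pick at the outset an admissible $\tilde\eta$ with $\tilde\eta = 1$ on $[0, \tfrac{3}{2}]$ say, or simply observe that for $n$ large enough (and one can absorb small $n$ into the constant) $\eta(\frac{i+k}{n})$ agrees with $1$ for $i \le n - k$, vanishes for $i > 2n - k$, and is a fixed smooth profile in between; this still makes $\partial_t^k Q_{n,\eta}^{(\b,\g)}f$ a "$Q$-type" operator of the form $\sum_i \psi(\frac{i}{n}) \proj_i^{\b+k,\g+k}(\partial_t^k f)$ for an admissible $\psi$, hence \eqref{eq:Qn-etaEstimate} applies with weight $\sw_{\b+k,\g+k}$ and degree parameter roughly $n-k$. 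Therefore
\begin{equation*}
  \bigl\| \partial_t^k f - \partial_t^k Q_{n,\eta}^{(\b,\g)} f \bigr\|_{p,\sw_{\b+k,\g+k}}
  = \bigl\| \partial_t^k f - Q^{(\b+k,\g+k)}_{\sim n-k}\bigl(\partial_t^k f\bigr) \bigr\|_{p,\sw_{\b+k,\g+k}}
  \le c_k\, E_{n-k}\bigl(\partial_t^k f\bigr)_{p,\sw_{\b+k,\g+k}}.
\end{equation*}

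The main obstacle is the bookkeeping in the previous paragraph: verifying that after differentiating and reindexing, $\partial_t^k Q_{n,\eta}^{(\b,\g)}$ really is a bounded near-best-approximation operator of the type covered by \eqref{eq:Qn-etaEstimate} for the parameters $(\b+k,\g+k)$, despite the cut-off argument being translated by $k/n$ rather than being literally $\psi(\cdot/n)$. I expect this to be handled by a standard device — either replacing $\eta$ by a slightly larger admissible function so that the shifted cut-off still equals $1$ on the relevant dyadic blocks, or noting that the difference between $Q_{n,\eta}^{(\b,\g)}$ composed with $\partial_t^k$ and a genuine $Q_{m,\psi}^{(\b+k,\g+k)}$ (with $m \sim n$) is supported on finitely many frequency blocks and hence bounded with a constant independent of $n$. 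Once that identification is in place, the estimate is immediate from \eqref{eq:Qn-etaEstimate}, and the requirement $\partial_t^k f \in L^p(\VV_0^{d+1},\sw_{\b+k,\g+k})$ for $0 \le k \le r$ is exactly what is needed to make every quantity in sight finite. Everything else — the iteration of Theorem~\ref{thm:partialP}, the term-by-term differentiation of the finite sum \eqref{eq:Qn-eta}, the reindexing — is routine.
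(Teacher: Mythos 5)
Your argument is essentially the paper's own proof: iterate Theorem~\ref{thm:partialP} to get $\partial_t^k\proj_j^{\b,\g}f=\proj_{j-k}^{\b+k,\g+k}\partial_t^k f$, differentiate the finite sum \eqref{eq:Qn-eta} term by term, reindex, and observe that the resulting operator with shifted cut-off $\eta\bigl(\tfrac{j+k}{n}\bigr)$ is still a near-best approximation operator for $(\b+k,\g+k)$, so \eqref{eq:Qn-etaEstimate} applies --- exactly the operator $\sQ_{n,\eta}^{(\b+k,\g+k)}$ in \eqref{eq:wtQ}, with the paper likewise asserting the shifted cut-off ``plays essentially the same role.'' One small repair: for $1\le p<2$ the hypothesis $\partial_t^j f\in L^p$ does not give $L^2$ by finiteness of the measure, but since $f\in C^r(\VV_0^{d+1})$ the derivatives are bounded on the compact cone, so the $L^2$ hypotheses of Theorem~\ref{thm:partialP} hold anyway.
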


\begin{proof}
This follows immediately from
\begin{align*}
   \fD^k Q_{n,\eta}^{(\b,\g)} f & =   \sum_{j=0}^{2n} \eta\left(\f j n \right)  \fD^j\proj_j^{\b,\g} f
       =   \sum_{j=k}^{2n} \eta\left(\f j n \right) \proj_{j-k}^{\b+k,\g+k} f \\
      & =   \sum_{j=0}^{2n-k} \eta\left(\f {j+k}n \right) \proj_{j}^{\b+k,\g+k}  \fD^k f
         = \sQ_{n,\eta}^{(\b+k,\g+k)}  \fD^k  f,
\end{align*}
where we define
\begin{equation} \label{eq:wtQ}
  \sQ_{ n,\eta}^{(\b,\g)} g =  \sum_{j=0}^{2n-k} \eta\left(\f {j+k}n \right) \proj_{j}^{\b,\g} g, \quad 0 \le k < 2n.
\end{equation}
For fixed $k \le r$ independent of $n$, the function $\eta\left(\f {j+k}n \right)$ plays essentially the same role
as $\eta\left(\f {j}n \right)$, so that \eqref{eq:Qn-etaEstimate} holds with $\sQ_{n,\eta}^{(\b,\g)} \fD^k f$ in place
of $Q_{n,\eta}^{(\b,\g)}f$, form which the proof follows readily.
\end{proof}

\section{Sobolev orthogonal polynomials on the conic surface}
\setcounter{equation}{0}

For the Sobolev inner product, we again use the differential operator $\fD$ defined in \eqref{eq:fD}. 
For $s \in \NN$, let us define 
$$
W_p^s(\VV_0^{d+1}, \sw_{\b+s,0})= \left \{f\in C(\VV_0^{d+1}): \fD^s f\in L^p(\VV_0^{d+1},\sw_{\b+s,0})\right \},
$$
where $1 \le p \le \infty$ and the space is taken as the $C^s(\VV_0^{d+1})$ if $p = \infty$.

Let $s$ be a positive integer and $\b > -s-d$. Let $\l_1,\ldots,\l_{r-1}$ be fixed positive numbers. 
We consider the Sobolev inner product defined by
\begin{align} \label{eq:ipd-s}
  \la f,g\ra_{\b, -s} =\,& \frac{1}{\o_d}
       \int_{\VV_0^{d+1}} \fD^s f (x,t) \fD^s g(x,t) t^{\b+s} \d \sm(x,t)\\\
           &  + \sum_{k=0}^{s-1} \frac{\l_k}{\o_d} \int_{\SS^{d-1}}
              \fD^k f(\xi,1) \fD^k g(\xi,1) \d \s (\xi), \notag
\end{align}
which is evidently an inner product on the space $W_2^s(\VV_0^{d+1}, \sw_{\b+s,0})$. We denote by
$\CV_n^d(\sw_{\b,-s})$ the space of orthogonal polynomials of degree $n$ with respect to this inner product.
Our first task is to find an orthogonal basis for this space.

Recall the modified Jacobi polynomial $J_n^{(\a,-s)}$ defined in \eqref{eq:CJ}. Let
$\{Y_\ell^m: 1 \le \ell \le \dim \CH_m^d\}$ be an orthonormal basis of $\CH_m^d$. For $1 \le \ell \le \dim \CH_m^d$,
$0\le m \le n$, we define
\begin{equation} \label{eq:SOP-s}
    \sS_{m,\ell}^{n, (\b,-s)} (x,t) := J_{n-m}^{(2m+\b+d-1,-s)}(1-2 t) Y_\ell^m(x).
\end{equation}

\begin{thm} \label{thm:SOP-cone}
Let $s \in \NN$ and $\b > -d-s$. The polynomials $\sS_{m,\ell}^{n, (\b,-s)}$, $1 \le \ell \le \dim \CH_m^d$
and $0\le m \le n$ consist of a basis of $\CV_n^d(\sw_{\b,-s})$. Moreover, for all $\ell$, $\sh_{m,n}^{(\b,-s)}  =
\langle \sS_{m,\ell}^{n, (\b,-s)}, \sS_{m,\ell}^{n, (\b,-s)} \rangle_{\b,-s}$ satisfies
$$
 \sh_{m,n}^{(\b,-s)}   = \begin{cases}2^{2n-2m} \l_{n-m} &  0 \le n -m \le s-1, \\
      2^{s - \b-2m-d}\, \wh h_{n-m-s}^{(s+2m+\b+d-1,0)} & n-m \ge s. \end{cases}
$$
\end{thm}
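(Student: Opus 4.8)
The plan is to reduce the multivariate Sobolev orthogonality on $\VV_0^{d+1}$ to the one-dimensional Sobolev orthogonality of Theorem~\ref{thm:SOP-1d} by integrating out the spherical variable, exactly as one does for the ordinary orthogonal polynomials on the cone in \cite{X20}. First I would parametrize $(x,t)=(t\xi,t)$ with $\xi\in\sph$, so that $\partial_t$ acts on $\sS_{m,\ell}^{n,(\b,-s)}(x,t)=J_{n-m}^{(2m+\b+d-1,-s)}(1-2t)\,Y_\ell^m(t\xi)$. Since $Y_\ell^m(t\xi)=t^m Y_\ell^m(\xi)$, the partial derivative $\partial_t$ (which by \eqref{eq:partial_t} is $\partial_{d+1}$, i.e.\ differentiation in the last slot only, holding $x$ fixed) does \emph{not} see the $\xi$-dependence; one must be careful here, because $t$ appears both as the last coordinate and through $x=t\xi$. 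The cleanest route is to work with $\partial_t=\partial_{d+1}$ directly on functions of $(x,t)$: then $\partial_t^k \sS_{m,\ell}^{n,(\b,-s)}(x,t) = \bigl[\partial^k J_{n-m}^{(2m+\b+d-1,-s)}(1-2\cdot)\bigr](t)\,Y_\ell^m(x)$ with $Y_\ell^m(x)$ a genuine solid harmonic independent of $t$, so the chain rule only produces factors of $(-2)^k$ from $1-2t$.

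Next I would substitute the basis functions into \eqref{eq:ipd-s}. The boundary terms at $t=1$ pick out $\partial_t^k \sS_{m,\ell}^{n,(\b,-s)}(\xi,1) = (-2)^k \bigl[\partial^k J^{(\cdot,-s)}_{n-m}\bigr](-1)\,Y_\ell^m(\xi)$, and by \eqref{eq:CJ1b} these vanish unless $n-m\le s-1$ and $k=n-m$. The volume term, after writing $\d\sm(x,t)=t^{d-1}\d t\,\d\s(\xi)$ and using $Y_\ell^m(t\xi)=t^mY_\ell^m(\xi)$ for the solid-harmonic factors, factors as a product of $\tfrac1{\o_d}\int_\sph Y_\ell^m(\xi)Y_{\ell'}^{m'}(\xi)\,\d\s(\xi)=\delta_{m,m'}\delta_{\ell,\ell'}$ against a one-dimensional integral $(-2)^{2s}\int_0^1 \partial^s J^{(\cdot,-s)}_{n-m}(1-2t)\,\partial^s J^{(\cdot,-s)}_{n'-m}(1-2t)\, t^{2m+\b+s+d-1}\,\d t$. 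The change of variables $u=1-2t$ turns $t^{2m+\b+s+d-1}=\bigl(\tfrac{1-u}{2}\bigr)^{2m+\b+s+d-1}$ into the Jacobi weight $w_{2m+\b+s+d-1,\,0}$ up to an explicit power of $2$, and by \eqref{eq:CJ1a} we get $\partial^s J^{(\a-s,-s)}_{n-m}(\text{with }\a=2m+\b+d+s-1$, so $\a-s = 2m+\b+d-1)$ equal to $\wh P^{(2m+\b+d+s-1,\,0)}_{n-m-s}$. Wait — I should line up the parameters via \eqref{eq:CJ}: there $J_n^{(\a-s,\b-s)}$ has $\partial^s J_n^{(\a-s,\b-s)}=\wh P_{n-s}^{(\a,\b)}$, so to get the index $(2m+\b+d-1,-s)$ one takes $\a-s = 2m+\b+d-1$, $\b_{\mathrm{Jac}}-s=-s$, i.e.\ $\a = s+2m+\b+d-1$ and $\b_{\mathrm{Jac}}=0$; this matches the target norm $\wh h^{(s+2m+\b+d-1,0)}_{n-m-s}$. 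Thus in the range $n-m\ge s$ the volume integral reduces, after collecting the powers of $2$, to a constant times $[J^{(\cdot,-s)}_{n-m},J^{(\cdot,-s)}_{n-m}]^{-s}_{s+2m+\b+d-1,\,0}$ with all $\mu_k=0$ in that regime (the boundary terms being absent), which by Theorem~\ref{thm:SOP-1d} is $\wh h^{(s+2m+\b+d-1,0)}_{n-m-s}$.

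Orthogonality for distinct $(n,m,\ell)$ then follows from the two factorizations: distinct $(m,\ell)$ are separated by the spherical-harmonic orthogonality, while for fixed $m$ and distinct $n,n'$ the univariate factor $[J^{(\cdot,-s)}_{n-m},J^{(\cdot,-s)}_{n'-m}]^{-s}_{\cdot,0}=0$ by Theorem~\ref{thm:SOP-1d}; one must check that the $t=1$ boundary contributions and the volume contribution assemble into precisely the univariate Sobolev form \eqref{eq:SOPd=1} (with the correct $\mu_k$), which is the bookkeeping heart of the argument. That the $\sS_{m,\ell}^{n,(\b,-s)}$ span $\CV_n^d(\sw_{\b,-s})$ follows by the usual dimension count: there are $\sum_{m=0}^n\dim\CH_m^d = \dim\CH_n^{d+1}$ of them, they are mutually orthogonal hence linearly independent, and $\dim\CV_n^d(\sw_{\b,-s})=\dim\CH_n^{d+1}$ since $\VV_0^{d+1}$ is a quadratic surface (same argument as for $\g>-1$). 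Finally, squaring out the norm: for $0\le n-m\le s-1$ only the boundary term with $k=n-m$ survives, giving $(-2)^{2(n-m)}\l_{n-m}\cdot\tfrac1{\o_d}\int_\sph|Y_\ell^m|^2 = 2^{2n-2m}\l_{n-m}$; for $n-m\ge s$ we get $2^{?}\,\wh h^{(s+2m+\b+d-1,0)}_{n-m-s}$, and tracking the power of $2$ carefully (from $\d t = -\tfrac12\d u$, from $(-2)^{2s}$, from $\bigl(\tfrac{1-u}{2}\bigr)^{2m+\b+s+d-1}$, and from the normalization $A^{(\cdot)}_{\cdot}$ hidden in $\wh h$) should yield the stated exponent $s-\b-2m-d$.

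The main obstacle I anticipate is purely the constant-chasing in the second case: one has to keep straight three separate sources of powers of $2$ — the chain-rule factor $(-2)^s$ from differentiating in $1-2t$, the substitution Jacobian, and the $2^{-(2m+\b+s+d-1)}$ coming from rewriting $t^{2m+\b+s+d-1}$ as a Jacobi weight — and combine them with the definition of $\wh h_n^{(\a,\b)}$ from Theorem~\ref{thm:SOP-1d}; a sign or exponent slip here is the only real risk, since the structural part (factorization into a spherical integral times a univariate Sobolev form, then invoking Theorem~\ref{thm:SOP-1d}) is routine once the reduction is set up. A secondary point requiring care is confirming that $\partial_t = \partial_{d+1}$ genuinely kills the $t$-dependence inside $Y_\ell^m(x)$ — i.e.\ that on the quotient $\RR[x,t]/\la t^2-\|x\|^2\ra$ the representative $\sS_{m,\ell}^{n,(\b,-s)}(x,t)=J^{(\cdot,-s)}_{n-m}(1-2t)Y_\ell^m(x)$ is the right one to differentiate — but this is exactly the convention already fixed in Section~3 for the $\g>-1$ case and in \eqref{eq:partial_t}, so no new subtlety arises.
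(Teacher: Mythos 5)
Your reduction---parametrize $(x,t)=(t\xi,t)$, use the orthonormality of the $Y_\ell^m$ to factor the cone inner product into a spherical Kronecker delta times the univariate Sobolev form $[\cdot,\cdot]^{-s}_{\a,0}$ with $\a=s+2m+\b+d-1$ and $\mu_k$ proportional to $\l_k$ (the $t=1$ boundary terms becoming the endpoint terms via \eqref{eq:CJ1b}), then invoke Theorem~\ref{thm:SOP-1d} for orthogonality and norms and finish with the dimension count---is exactly the paper's proof. The constant you left unchased does come out as stated: $(-2)^s\cdot(-2)^s$ from the chain rule, $\tfrac12$ from the Jacobian of $u=1-2t$, and $2^{-\a}$ from rewriting $t^{\a}$ as the Jacobi weight give $2^{2s-\a-1}=2^{s-\b-2m-d}$, while the case $0\le n-m\le s-1$ gives $2^{2(n-m)}\l_{n-m}$ as you computed.
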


\begin{proof}
Let $q_{n-m}(t) = J_{n-m}^{(2m+\b+d-1, -s)}(t)$. Changing variable $x = t \xi$ and using \eqref{eq:fD_basis}, we obtain
\begin{align*}
 \left \langle \sS_{m,\ell}^{n,(\b,-s)}, \sS_{m',\ell'}^{n',(\b,-s)} \right \rangle_{\b,-s}
  = \, & 2^{ 2s}  \int_0^1 t^{d-1} q_{n-m}^{(s)} (1-2t) q_{n'-m'}^{(s)}(1-2t) t^{\b+s+m+m'} \d t \\
 &\qquad \times  \frac{1}{\o_d} \int_{\SS^{d-1}}  Y_\ell^m (\xi)Y_{\ell'}^{m'} (\xi) \d \s (\xi) \\
 & + \sum_{k=0}^{s-1} \l_k 2^{2k} q_{n-m}^{(k)}(-1) q_{n'-m'}^{(k)}(-1)  \\
 &\qquad \times  \frac{1}{\o_d} \int_{\SS^{d-1}}    Y_\ell^m (\xi)Y_{\ell'}^{m'} (\xi) \d \s (\xi).
\end{align*}
Using the orthogonality of $Y_\ell^m$ and changing the variable $t \mapsto (1-t)/2$ in the first integral, we
see that the expression containing $q_{n-m}$ can be written in terms of the Sobolev inner product
$[\cdot,\cdot]_{\a,\b}^{-s}$ defined in \eqref{eq:SOPd=1}. More precisely, we obtain
$$
\left \langle \sS_{m,\ell}^{n,(\b,-s)},  \sS_{m',\ell'}^{n',(\b,-s)} \right \rangle_{\b,-s} =
   2^{2 s - \a-1} [q_{n-m}, q_{n'-m}]_{\a,0}^{-s} \delta_{m,m'} \delta_{\ell,\ell'},
$$
where $\a = s+\b+2m+d-1$ and the inner product $[\cdot, \cdot]_{\a,0}^{-s}$ is defined as in \eqref{eq:SOPd=1}
but with $\mu_k = 2^{2k} \l_k/2^{2 s-\a-1}$. As shown in Theorem \ref{thm:SOP-1d}, the polynomials $J_n^{(\a-s,-s)}$
are orthogonal with respect to this inner product, regardless of the values of $\mu_k$. This
proves the orthogonality of $\sS_{m,\ell}^{n,(\b,-s)}$. Moreover,  the norm $\sh_{m,n}^{(\b,-s)}$ follows
from the norm given in Theorem \ref{thm:SOP-1d}.
\end{proof}

\iffalse
\begin{rem} \label{rem:4.1}
We could also include an additional term in the right-hand side of $\la \cdot,\cdot\ra_{\b,-s}$ defined by
$$
   \int_{\VV_0^{d+1}} \nabla_0^s f(x,t) \cdot  \nabla_0^s g(x,t) \d \sm(x,t),
$$
where $\nabla_0$ acts on $\xi$. By the discussion for the inner product \eqref{eq:SOP-sph} on the unit sphere, the
inner product with this additional term has the same orthogonal basis.
\end{rem}
\fi

Recall that the constant $A_n^{(\a,\b)}$ is defined in \eqref{eq:JacobiJ}.

\begin{cor}\label{cor:der_sS}
For $0 \le m \le n-s$,
\begin{equation} \label{eq:D_sS}
 \fD^s \sS_{m,\ell}^{n, (\b,-s)}(x,t) = (-2)^s A_{n-s-m}^{(s+2m+\b+d-1,0)} S_{m,\ell}^{n-s, (\b+s,0)}(x,t),
\end{equation}
and it is equal to zero if $m > n-s$. Moreover, for $1 \le k \le s-1$ and $\xi \in \sph$,
\begin{equation} \label{eq:D_sS2}
    \fD^k  \sS_{m,\ell}^{n, (\b,-s)}(\xi,1) = \begin{cases}
         (-2)^k Y_\ell^m(\xi) \delta_{k,n-m}, & m > n-s \\
         0, & m \le n-s.
     \end{cases}
\end{equation}
\end{cor}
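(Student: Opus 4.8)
The plan is to unwind the definition \eqref{eq:SOP-s} of $\sS_{m,\ell}^{n,(\b,-s)}$ and differentiate the one-variable factor $J_{n-m}^{(2m+\b+d-1,-s)}(1-2t)$ using the properties of the modified Jacobi polynomials recorded in \eqref{eq:CJ1a}--\eqref{eq:CJ1b}. Writing $\a = 2m+\b+d-1$ and $q(t) = J_{n-m}^{(\a,-s)}(t)$, so that $\sS_{m,\ell}^{n,(\b,-s)}(x,t) = q(1-2t)\,Y_\ell^m(x)$, the chain rule gives $\partial_t^k[q(1-2t)] = (-2)^k q^{(k)}(1-2t)$ for every $k$, and crucially $Y_\ell^m(x)$ does not depend on $t$, so the spherical part is untouched. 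This reduces both claims to reading off $q^{(s)}$ and $q^{(k)}(-1)$ from Section 2.

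For the first identity, I would apply \eqref{eq:CJ1a} with $n$ replaced by $n-m$ and $(\a,\b)$ replaced by $(\a+s,\,?)$ — here one must be careful about which Jacobi parameters appear. In the notation of \eqref{eq:CJ}, $J_N^{(\a'-s,\b'-s)}$ has $\partial^s J_N^{(\a'-s,\b'-s)} = \wh P_{N-s}^{(\a',\b')}$; matching $J_{n-m}^{(2m+\b+d-1,-s)}$ to the template $J_{n-m}^{(\a'-s,\b'-s)}$ forces $\a'-s = 2m+\b+d-1$ and $\b'-s = 0$, i.e. $\a' = s+2m+\b+d-1$ and $\b' = s$. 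Wait — but the right-hand side of \eqref{eq:D_sS} displays $S_{m,\ell}^{n-s,(\b+s,0)}$, whose $t$-factor per \eqref{eq:sfOPbasis} is $P_{n-s-m}^{(2m+(\b+s)+d-1,0)}(1-2t)$, i.e. the Jacobi indices are $(s+2m+\b+d-1,\,0)$, not $(\cdot,\,s)$. So I should instead invoke the convention in the excerpt just below \eqref{eq:CJ} that $J_n^{(\a,-s)}$ is defined by the $n\ge s$ branch only when the second parameter is $-s$; here the second template parameter is $0$, meaning we are in the case $\b' = s$ but the integration $\int_{-1}^t \frac{(t-u)^{s-1}}{(s-1)!}\wh P^{(\a,0)}_{\,\cdot}(u)\,du$ with $\wh P^{(s+2m+\b+d-1,0)}$. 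Thus $\partial^s q(t) = \wh P_{n-m-s}^{(s+2m+\b+d-1,0)}(t) = A_{n-m-s}^{(s+2m+\b+d-1,0)} P_{n-m-s}^{(s+2m+\b+d-1,0)}(t)$ by \eqref{eq:JacobiJ}. Since $P_{n-m-s}^{(2m+(\b+s)+d-1,0)}(1-2t)\,Y_\ell^m(x)$ is exactly $S_{m,\ell}^{n-s,(\b+s,0)}(x,t)$, combining with the $(-2)^s$ from the chain rule yields \eqref{eq:D_sS}. For $m > n-s$, i.e. $n-m \le s-1$, the polynomial $q = J_{n-m}^{(\a,-s)}$ is $\tfrac{(t+1)^{n-m}}{(n-m)!}$ by the first branch of \eqref{eq:CJ}, which has degree $n-m < s$, so $q^{(s)} \equiv 0$ and the $s$-th derivative vanishes.

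For \eqref{eq:D_sS2} I would similarly use \eqref{eq:CJ1b}: evaluated at $t=1$, which under $t\mapsto 1-2t$ corresponds to the argument $-1$, we have $\partial^k q(-1) = q^{(k)}(-1)$, and \eqref{eq:CJ1b} with $N = n-m$ says this equals $\delta_{k,n-m}$ when $n-m \le s-1$ (the first branch) and $0$ when $n-m \ge s$, for all $0 \le k \le s-1$. Multiplying by the chain-rule factor $(-2)^k$ and by $Y_\ell^m(\xi)$ (which is just $Y_\ell^m$ restricted to the rim, where $\|x\|=t=1$ so $x = \xi$) gives precisely the two cases in \eqref{eq:D_sS2}, with $m > n-s \iff n-m \le s-1$ and $m \le n-s \iff n-m \ge s$.

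There is no real obstacle here: the corollary is a direct consequence of the characterizing properties \eqref{eq:CJ1a}--\eqref{eq:CJ1b} of $J_n^{(\a-s,\b-s)}$ together with the chain rule and the $t$-independence of $Y_\ell^m$. The only point demanding care is bookkeeping of the Jacobi parameters — specifically recognizing that the $s$-th $t$-derivative lands on Jacobi polynomials with second parameter $0$ (matching the ordinary cone polynomials $S_{m,\ell}^{n-s,(\b+s,0)}$ with the new weight $\sw_{\b+s,0}$) rather than with second parameter $s$, and that the normalization constant $A_{n-s-m}^{(s+2m+\b+d-1,0)}$ is exactly the factor relating $\wh P$ to $P$ in \eqref{eq:JacobiJ}. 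I would double-check the power of $-2$ by tracking that each of the $s$ differentiations in $t$ of $q(1-2t)$ contributes one factor of $-2$, giving $(-2)^s$ overall, and likewise $(-2)^k$ in the rim formula.
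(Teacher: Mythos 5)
Your proposal is correct and follows essentially the same route as the paper's proof: differentiate the $t$-factor $J_{n-m}^{(2m+\b+d-1,-s)}(1-2t)$ via the chain rule (giving the $(-2)^s$ and $(-2)^k$ factors), read off the $s$-th derivative from \eqref{eq:CJ1a} and the boundary values at $t=1$ (argument $-1$) from \eqref{eq:CJ1b}, and note that $Y_\ell^m(x)$ is untouched by $\partial_t$. The only blemish is the parameter bookkeeping in the middle: matching $J_{n-m}^{(2m+\b+d-1,-s)}$ to the template $J_{N}^{(\a'-s,\b'-s)}$ forces $\b'-s=-s$, i.e.\ $\b'=0$ (not $\b'=s$ as you first wrote), which is precisely why $\partial^s$ yields $\wh P_{n-m-s}^{(s+2m+\b+d-1,0)}$ and there was never any tension with the right-hand side $\sS_{m,\ell}^{n-s,(\b+s,0)}$.
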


\begin{proof}
If $m \le n-s$, by \eqref{eq:fD_basis} and the identity \eqref{eq:CJ1a},
\begin{align*}
\fD^s \sS_{m,\ell}^{n, (\b,-s)}(x,t)  \,&  =
     \frac{\d^s}{\d t^s}\left[J_{n-m}^{(2m+\b+d-1,-s)}(1-2t)\right]Y_\ell^m(x) \\
   & = (-2)^{s} \wh P_{n-m-s}^{(s+2m+\b+d-1,0)}(1-2t)Y_\ell^m(x),
\end{align*}
from which \eqref{eq:D_sS} follows from \eqref{eq:JacobiJ} and the definition of $\sS_{\ell,m}^{n,(\b+s,0)}$.
Moreover, since $n-m \ge s$, for $1 \le k \le s-1$ it follows immediately by \eqref{eq:CJ1b} that \eqref{eq:D_sS2}
holds. If $m > n-s$, then the derivative in the identity \eqref{eq:D_sS} is evidently zero since the Jacobi polynomial
in $\sS_{m,\ell}^{n, (\b,-s)}$ is of degree $n-m <s$. Moreover, for $1 \le k \le s-1$, \eqref{eq:D_sS2} follows
from \eqref{eq:CJ1b}.
\end{proof}

Our notation for the Sobolev orthogonal polynomials $\sS_{m,\ell}^{n,(\b,-s)}$ is the same as the one
for the ordinary orthogonal polynomials $\sS_{m,\ell}^{n,(\b,\g)}$ with $\g = -s$. This is intentional as can be
seen in \eqref{eq:Jn(a,-s)=Pn}, which however only works for $n \ge s$ or $\sS_{m,\ell}^{n,(\b,-s)}$ for
$n-m \ge s$. We can, moreover, give an explicit expression of the Sobolev orthogonal polynomials by
using the identity \eqref{eq:Jn(a,-s)}.

\begin{cor}
For $0 \le \ell \le \dim \CH_m^d$ and $0 \le m \le n$, the polynomials $S_{m,\ell}^{n,(\b,-s)}$ satisfy
\begin{equation}\label{eq:sS-explicit}
   \sS_{m,\ell}^{n,(\b,-s)}(x,t) = \begin{cases} b_{m,n}^s  (1- t)^s S_{m,\ell}^{n-s,(\b,s)}(x,t) & n-m \ge s, \\
           \frac{2^{n-m}}{(n-m)!}  (1-t)^{n-m} Y_\ell^m(x), &  0 \le n-m\le s-1,
\end{cases}
\end{equation}
where $b_{m,n}^s =   (-1)^s 2^s A_{n-s-m}^{(2m+\b+d-1,s)}/{(-n+m)_s}$. In particular, the space $\CV_n^d(\sw_{\b,-s})$
satisfies a decomposition
\begin{equation}\label{eq:decompVnd}
  \CV_n^d(\sw_{\b,-s}) = \bigoplus_{j=0}^{s-1} (1-t)^j \CH_{n-j}^{d,0} \bigoplus (1-t)^s \CV_{n-s}^d(\sw_{\b,s}).
\end{equation}
\end{cor}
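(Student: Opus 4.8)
The plan is to read off both displayed formulas directly from the description \eqref{eq:SOP-s} of the basis $\sS_{m,\ell}^{n,(\b,-s)}$ together with the explicit form of the one-variable polynomials $J_n^{(\a,-s)}$ supplied by \eqref{eq:Jn(a,-s)} and \eqref{eq:Jn(a,-s)=Pn}. First I would treat the two ranges of $n-m$ separately. When $0\le n-m\le s-1$, the first branch of \eqref{eq:CJ} gives $J_{n-m}^{(2m+\b+d-1,-s)}(1-2t) = \frac{(2-2t)^{\,n-m}}{(n-m)!}$, so that $\sS_{m,\ell}^{n,(\b,-s)}(x,t)$ is a nonzero constant multiple of $(1-t)^{n-m}Y_\ell^m(x)$; renormalizing the leading polynomial (the choice is immaterial up to a constant) yields the lower branch of \eqref{eq:sS-explicit} exactly as stated. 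When $n-m\ge s$, I apply \eqref{eq:Jn(a,-s)} with $\a = 2m+\b+d-1$ and $n$ replaced by $n-m$: this rewrites $J_{n-m}^{(2m+\b+d-1,-s)}(1-2t)$ as a multiple of $(1+ (1-2t))^s\,\wh P_{n-m-s}^{(2m+\b+d-1,s)}(1-2t) = 2^s(1-t)^s\wh P_{n-m-s}^{(2m+\b+d-1,s)}(1-2t)$. Recognizing $\wh P_{n-m-s}^{(2m+\b+d-1,s)}(1-2t)\,Y_\ell^m(x) = A_{n-m-s}^{(2m+\b+d-1,s)}\,S_{m,\ell}^{n-s,(\b,s)}(x,t)$ from \eqref{eq:JacobiJ} and \eqref{eq:sfOPbasis}, I collect the constants into $b_{m,n}^s$; the appearance of $(-n-m)_s$ in the denominator comes from the factor $(-\a-n)_s = (-\,(2m+\b+d-1)-(n-m))_s$ in \eqref{eq:Jacobi_n<0} combined with the passage through \eqref{eq:Jn(a,-s)=Pn}. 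Since the explicit constant is recorded in the statement, I would present this as a bookkeeping computation and not dwell on it.

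For the decomposition \eqref{eq:decompVnd}, the idea is that the basis of $\CV_n^d(\sw_{\b,-s})$ splits naturally according to whether $0\le n-m\le s-1$ or $n-m\ge s$. By \eqref{eq:sS-explicit}, the basis elements with $n-m = j$ for a fixed $j\in\{0,\ldots,s-1\}$ are precisely $(1-t)^j Y_\ell^{n-j}(x)$ with $Y_\ell^{n-j}$ running over an orthonormal basis of $\CH_{n-j}^d$; since $(1-t)^j Y_\ell^{n-j}$ ranges over $(1-t)^j\CH_{n-j}^{d,0}$ restricted to $\VV_0^{d+1}$, these span the summand $(1-t)^j\CH_{n-j}^{d,0}$. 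The basis elements with $n-m\ge s$ are, again by \eqref{eq:sS-explicit}, exactly $(1-t)^s$ times the basis $\{S_{m,\ell}^{n-s,(\b,s)}: 0\le m\le n-s,\ 1\le\ell\le\dim\CH_m^d\}$ of $\CV_{n-s}^d(\sw_{\b,s})$, so they span $(1-t)^s\CV_{n-s}^d(\sw_{\b,s})$. Taking the direct sum over all $m$ gives \eqref{eq:decompVnd}; a dimension count confirms the sum is direct, since $\dim\CH_{n-j}^{d,0} = \dim\CH_{n-j}^d$ and $\sum_{j=0}^{s-1}\dim\CH_{n-j}^d + \dim\CV_{n-s}^d(\sw_{\b,s}) = \sum_{m=0}^{n}\dim\CH_m^d = \dim\CV_n^d(\sw_{\b,-s})$ by the dimension formula for $\CV_n^d$.

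The only genuine subtlety — and the step I would flag as the main obstacle — is justifying that the right-hand side of \eqref{eq:decompVnd} is an \emph{internal} direct sum inside the polynomial space on $\VV_0^{d+1}$, i.e. that the summands are linearly independent rather than merely spanning. Over the ambient polynomial ring this is clear, but on $\VV_0^{d+1}$ one works modulo the ideal $\la t^2 - \|x\|^2\ra$, so one must check that no nontrivial linear combination of $(1-t)^j Y(x)$ (over varying $j$ and $Y\in\CH_{n-j}^{d,0}$) and $(1-t)^s q(x,t)$ vanishes on the cone. The cleanest way to handle this is to observe that the map is an isomorphism onto $\CV_n^d(\sw_{\b,-s})$ by construction — the left and right sides have the same dimension and the right side is spanned by the image of a basis of the left side — so linear independence is automatic once the spanning statement and the dimension count are in place. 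Hence I would organize the proof so that \eqref{eq:sS-explicit} is proved first, the spanning claim for each summand is read off from it, and \eqref{eq:decompVnd} then follows by comparing dimensions with Theorem~\ref{thm:SOP-cone}.
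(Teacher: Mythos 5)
Your argument is correct and is essentially the paper's own proof: the lower branch is read straight from the first case of \eqref{eq:CJ}, the upper branch from \eqref{eq:Jn(a,-s)} (equivalently \eqref{eq:Jacobi_n<0} combined with \eqref{eq:Jn(a,-s)=Pn}) applied with $\a=2m+\b+d-1$ and degree $n-m$, and the decomposition \eqref{eq:decompVnd} then follows from \eqref{eq:sS-explicit} together with the basis of Theorem~\ref{thm:SOP-cone}, your dimension count merely making the directness of the sum explicit. One bookkeeping caveat: your own computation yields the denominator $(-(n-m))_s=(m-n)_s$, because the factor $(-\a-n)_s$ cancels between \eqref{eq:Jacobi_n<0} and \eqref{eq:Jn(a,-s)=Pn}, so the $(-n-m)_s$ printed in the statement (like the suppressed factor $2^{n-m}/(n-m)!$ in the lower branch) is a normalization typo, and your parenthetical attempt to derive it from $(-\a-n)_s$ should be dropped rather than kept as part of the proof.
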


\begin{proof}
This follows from the identity \eqref{eq:Jacobi_n<0}, which shows, together with \eqref{eq:Jn(a,-s)=Pn}, that
$$
  J_{n}^{(\a,-s)}(1-2t) = \frac{(-1)^s 2^s}{(-n)_s} A_{n-s}^{(\a,s)}(1-t)^s P_{n-s}^{(\a,s)}(1-2t)
$$
for $n\ge s$, which implies, with $n$ replaced by $n-m$, the identity \eqref{eq:sS-explicit} when $n - m \ge s$.
For $n -m \le s-1$, \eqref{eq:sS-explicit} follows immediately from the definition of $J_n^{(\a,-s)}$ in
\eqref{eq:CJ}. Since $\{Y_\ell^m: 1 \le \ell \le m\}$ is a basis of $\CH_n^{d,0}$, the decomposition
\eqref{eq:decompVnd} is an immediate consequence of \eqref{eq:sS-explicit}.
\end{proof}

With the expression \eqref{eq:sS-explicit} for $\sS_{m,\ell}^{n, (\b,-s)}$, we could bypass the integral
definition of $J_n^{(\a,-s)}$ in \eqref{eq:CJ}. However, the integral definition is more convenient for studying
the Fourier orthogonal series, as shown below.

For $f \in W_2^s(\VV_0^{d+1}, \sw_{\b+s,0})$, the Fourier orthogonal
series of $f$ is defined by
$$
  f = \sum_{n=0}^\infty \proj_n^{\b,-s} f,
$$
where $\proj_n^{\b,-s}: W_2^s(\VV_0^{d+1}, \sw_{\b+1,0}) \rightarrow \CV_n^d(\sw_{\b,-s})$ is the orthogonal
projection operator, which satisfies, using the orthogonal basis given above,
$$
  \proj_n^{\b,-s} f = \sum_{m=0}^n \sum_{\ell}\hat f_{m,\ell}^{n, (\b,-s)} \sS_{m,\ell}^{n,(\b,-s)},
  \quad \hbox{where}\quad \hat f_{m,\ell}^{n, (\b,-s)} := \frac{\left \langle f,  \sS_{m, \ell}^{n,(\b,-s)}\right \rangle_{\b,-s}}
      {\sh_{m,n}^{(\b,-s)}}.
$$
Using the basis in Theorem \ref{thm:SOP-cone} and its corollary, we can derive an integral
representation for the projection operator $\proj_n^{\b,-s} f$.

\begin{thm} \label{thm:proj-s=int}
Let $s$ be a positive integer. For $\b > -d-s$, let $f$ be a differentiable function such that
$\fD^s f \in L^2(\VV_0^{d+1}, \sw_{\b+s,0})$. Then, for $n =0,1,2,\ldots$ and $(x,t) \in \VV_0^{d+1}$,
\begin{align}\label{eq:proj-s=int}
   \proj_n^{\b,-s} f(x,t) = \,& \sum_{m=0}^{s-1} \frac{(t-1)^m}{m!} \proj_{n-m}^\sph  \left[\fD^m f(\cdot,1)\right](x) \\
     %    \frac{1}{\o_d} \int_{\sph} \partial_t^{m} f(\xi,1) Y_\ell^{n-m} (\xi)\d \s(\xi) \\
        & + (-1)^s \int_{t}^{1} \frac{(v-t)^{s-1}}{(s-1)!} \proj_{n-s}^{\b+s,0} \left(\fD^s f\right) (x, v) \d v. \notag
\end{align}
\end{thm}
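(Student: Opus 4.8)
The plan is to expand $f$ in the Sobolev orthogonal basis and reconcile the two descriptions of $\proj_n^{\b,-s}f$ offered by the decomposition \eqref{eq:decompVnd}: the ``boundary'' part carried by the solid harmonics $(1-t)^j\CH_{n-j}^{d,0}$, and the ``interior'' part carried by $(1-t)^s\CV_{n-s}^d(\sw_{\b,s})$. Concretely, I would write $\proj_n^{\b,-s}f = \sum_{m=0}^n\sum_\ell \hat f_{m,\ell}^{n,(\b,-s)}\sS_{m,\ell}^{n,(\b,-s)}$ and compute the Fourier coefficients $\hat f_{m,\ell}^{n,(\b,-s)} = \langle f,\sS_{m,\ell}^{n,(\b,-s)}\rangle_{\b,-s}/\sh_{m,n}^{(\b,-s)}$ directly from the definition \eqref{eq:ipd-s} of the inner product, using Corollary \ref{cor:der_sS} to evaluate $\partial_t^k\sS_{m,\ell}^{n,(\b,-s)}$.

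The key computational step splits into two cases according to $n-m$. When $n-m\le s-1$, Corollary \ref{cor:der_sS} tells us that the $\partial_t^s$-integral over $\VV_0^{d+1}$ contributes nothing (the Jacobi factor has degree $<s$), and only the boundary term $k=n-m$ survives in the sum $\sum_{k=0}^{s-1}$, giving $\langle f,\sS_{m,\ell}^{n,(\b,-s)}\rangle_{\b,-s} = \l_{n-m}(-2)^{n-m}\frac1{\o_d}\int_{\sph}\partial_t^{n-m}f(\xi,1)Y_\ell^m(\xi)\d\s(\xi)$; dividing by $\sh_{m,n}^{(\b,-s)} = 2^{2(n-m)}\l_{n-m}$ and using the expression $\sS_{m,\ell}^{n,(\b,-s)}(x,t)=(1-t)^{n-m}Y_\ell^m(x)$ from \eqref{eq:sS-explicit}, the $m$-term of the projection collapses to $\frac{(t-1)^{n-m}}{?}$ times $\proj_{n-m}^{\sph}[\partial_t^{n-m}f(\cdot,1)]$ — after re-indexing $m\mapsto n-m$ this yields exactly the finite sum $\sum_{m=0}^{s-1}\frac{(t-1)^m}{m!}\proj_{n-m}^{\sph}[\partial_t^m f(\cdot,1)]$ (here one must carry the $1/m!$ from the definition of $J_n^{(\a,-s)}$ for $n\le s-1$ in \eqref{eq:CJ}). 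When $n-m\ge s$, the boundary terms $\partial_t^k\sS_{m,\ell}^{n,(\b,-s)}(\xi,1)$ vanish for $0\le k\le s-1$ by \eqref{eq:D_sS2}, so only the $\partial_t^s$-integral survives; by \eqref{eq:D_sS} this expresses $\hat f_{m,\ell}^{n,(\b,-s)}$ in terms of the ordinary Fourier coefficient $\widehat{\partial_t^s f}_{m,\ell}^{n-s,(\b+s,0)}$ of $\partial_t^s f$, which assembles into $\proj_{n-s}^{\b+s,0}(\partial_t^s f)$.

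The remaining — and I expect main — obstacle is reconstructing the \emph{integral} form of the interior term rather than leaving it as $c_n\,(1-t)^s$ times a polynomial in $\proj_{n-s}^{\b+s,0}(\partial_t^s f)$. Here is where the integral definition \eqref{eq:CJ} of $J_n^{(\a,-s)}$, rather than the closed form \eqref{eq:sS-explicit}, pays off: since $\sS_{m,\ell}^{n,(\b,-s)}(x,t) = \big(\int_{-1}^{1-2t}\frac{(1-2t-u)^{s-1}}{(s-1)!}\wh P_{n-m-s}^{(\a,0)}(u)\d u\big)Y_\ell^m(x)$, a change of variable $u = 1-2v$ turns this into $(-1)^s\int_t^1\frac{(v-t)^{s-1}}{(s-1)!}\,2^s\wh P_{n-m-s}^{(\a,0)}(1-2v)\,Y_\ell^m(x)\,\d v$, whose integrand (up to the normalization constant $A_{n-m-s}^{(\a,0)}$ folded into \eqref{eq:D_sS}) is exactly $(-1)^s\frac{(v-t)^{s-1}}{(s-1)!}$ times the building block $S_{m,\ell}^{n-s,(\b+s,0)}(x,v)$ of $\proj_{n-s}^{\b+s,0}(\partial_t^s f)(x,v)$. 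Summing over $m$ and $\ell$ and interchanging the finite sum with the $v$-integral then produces the claimed term $(-1)^s\int_t^1\frac{(v-t)^{s-1}}{(s-1)!}\proj_{n-s}^{\b+s,0}(\partial_t^s f)(x,v)\,\d v$. The one bookkeeping point requiring care is that the normalization constants $A_{n-m-s}^{(\a,0)}$ appearing in \eqref{eq:D_sS} are precisely the ones needed to convert $\wh P_{n-m-s}^{(\a,0)}$ back to $\wh P$-normalized form inside the integral, so the constants cancel cleanly; I would verify this matching of constants explicitly, and also check the edge cases $n<s$ (where the interior term is absent and the formula reduces to the finite sum, truncated at $m=n$) and $s=1$ (where it recovers the expected first-order formula).
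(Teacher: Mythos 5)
Your proposal is correct and follows essentially the same route as the paper's own proof: compute the Sobolev--Fourier coefficients via Corollary \ref{cor:der_sS} in the two regimes $n-m\ge s$ and $n-m\le s-1$, identify the former with $\wh{\partial_t^s f}_{m,\ell}^{n-s,(\b+s,0)}$ and the latter with spherical-harmonic coefficients of $\partial_t^m f(\cdot,1)$, and then use the integral definition \eqref{eq:CJ} with $u=1-2v$ to reassemble $\proj_{n-s}^{\b+s,0}(\partial_t^s f)(x,v)$ under the integral. The only bookkeeping nit (which you already flagged for verification) is that the substitution $u=1-2v$ produces a factor $+2^s$, not $(-1)^s2^s$; the $(-1)^s$ in the final formula comes instead from the $(-2)^s$ in the denominator of the coefficient formula \eqref{eq:FourierCoeffs}, after which the constants $A_{n-m-s}^{(2m+\a,0)}$ cancel exactly as you anticipated.
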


\begin{proof}
Let $\a = s+\b+d-1$. For $0 \le m \le n-s$, it follows from \eqref{eq:D_sS} that
\begin{align*}
\left \langle f,  \sS_{m, \ell}^{n,(\b,-s)}\right \rangle_{\b,-s} \,&  =(-2)^s A_{n-s-m}^{(2m+\a,0)}
   \frac{1}{\o_d} \int_{\VV_0^{d+1}} \fD^s f(x,t) \sS_{m,\ell}^{n-s,(\b+s,0)}(x,t) t^{\b+s} \d \sm(x,t)\\
         & = (-2)^s A_{n-s-m}^{(2m+\a,0)} \frac{1}{c_{\a,0}}
           \left \langle \fD^s f,  \sS_{m, \ell}^{n-s,(\b+s,0)}\right \rangle_{\b+s,0}
\end{align*}
and the norm of $\sS_{m,\ell}^{n,(\b,-s)}$ satisfies
\begin{align*}
  \sh_{m,n}^{(\b,-s)} = 2^{s - \b-2m-d} \wh h_{n-m-s}^{(\a+2m,0)} \, & =
   2^{2s}  \frac{1}{c_{2m+\a,0}} \left[A_{n-m-s}^{(2m+\a,0)}\right]^2 h_{n-s-m}^{(2m+\a,0)} \\
  & = \frac{1}{c_{\a,0}} 2^{2s} \left[A_{n-m-s}^{(2m+\a,0)}\right]^2 \frac{(\a+1)_{2m}}{(\a+2)_{2m}}h_{n-s-m}^{(2m+\a,0)}\\
  & =  \frac{1}{c_{\a,0}} 2^{2s} \left[A_{n-m-s}^{(2m+\a,0)}\right]^2 h_{n-s, m}^{\b+s,0}.
\end{align*}
Consequently, it follows that for $0 \le m \le n-s$,
\begin{equation}\label{eq:FourierCoeffs}
 \hat f_{m,\ell}^{n, (\b,-s)} = \frac{ \left \langle \fD^s f,  \sS_{m, \ell}^{n-s,(\b+s,0)}\right \rangle_{\b+s,0}}
       { (-2)^{s} A_{n-m-s}^{(2m+\a,0)} h_{n-s, m}^{\b+s,0}}
        = \frac{ \wh{\fD^s f}_{m,\ell}^{n-s, (\b+s,0)}}{(-2)^{s} A_{n-m-s}^{(2m+\a,0)}}.
\end{equation}
Now, for $n-s < m \le n$, it follows by Corollary \ref{cor:der_sS} that
\begin{align*}
\left \langle f, \sS_{m, \ell}^{n,(\b,-s)}\right \rangle_{\b,-s}
\,& = \sum_{k=0}^{s-1} \frac{\l_k}{\o_d} \int_{\sph} \fD^k f(\xi,1) \fD^k  \sS_{m, \ell}^{n,(\b,-s)}(\xi,1)\d \s(\xi) \\
 & = (-2)^{n-m} \frac{\l_{n-m}}{\o_d} \int_{\sph}\fD^{n-m} f(\xi,1) Y_\ell^{m} (\xi)\d \s(\xi),
\end{align*}
so that, using the norm of $\sh_{n,m}^{(\b,-s)}$ in Theorem \ref{thm:SOP-cone},
$$
\hat f_{m,\ell}^{n, (\b,-s)} = \frac{1}{(-2)^{n-m}} \frac{1}{\o_d} \int_{\sph} \fD^{n-m} f(\xi,1) Y_\ell^{m} (\xi)\d \s(\xi),
   \quad 0\le n-m \le s-1.
$$
These allow us to expand $\proj_n^{\b,-s} f$, for which we split the sum over $\sum_{m=0}^n$ into two parts. We make 
a change of variable for the first sum $\sum_{m=n-s}^n B_m = \sum_{m=0}^{s-1} B_{n-m}$ and write the polynomial 
$\sS_{n-m,\ell}^{n, (\b,-s)}$ using the second case of \eqref{eq:sS-explicit}, whereas for the second sum $\sum_{m=0}^{n-s} B_m$
we use the integal form in \eqref{eq:CJ} in $\sS_{n-m,\ell}^{n, (\b,-s)}$. We then obtain, using $\a = s+\b+d-1$ again, 
\begin{align*}
 & \proj_n^{\b,-s} f(x,t) \, = \sum_{m=0}^{s-1} \frac{(t-1)^m}{m!}
      \sum_{\ell=1}^{\dim \CH_{n-m}^d} \frac{1}{\o_d} \int_{\sph}
         \fD^{m} f(\eta,1) Y_\ell^{n-m} (\eta)\d \s(\eta)\,Y_\ell^{n-m} (x) \\
   & \qquad\quad
    + \sum_{m = 0}^{n-s} \sum_{\ell=1}^{\dim \CH_{m}^d}
       \frac{ \wh{\fD^s f}_{m,\ell}^{n-s, (\b+s,0)}}{(-2)^{s} A_{n-m-s}^{(2m+\a,0)}}
        \int_{-1}^{1-2t} \frac{(1-2t -u)^{s-1}}{(s-1)!} \wh P_{n-s}^{(\a+2m,0)}(u) \d u \, Y_\ell^m(x).
 \end{align*}
The inner sum of the first term on the right-hand side is
$$
t^{n-m} \proj_{n-m}^\sph [\fD^m f(\cdot,1)](\xi) = \proj_{n-m}^\sph [\fD^m f(\cdot,1)](x),
$$
since the function $\xi \mapsto \fD^m f(\xi,1)$ on the unit sphere and $\proj_{n-m}^\sph$ are homogeneous of degree
$n-m$. Changing variable $u = 1-2v$ and using \eqref{eq:JacobiJ} in the integral, the second term on the right-hand side becomes,
\begin{align*}
  \sum_{m = 0}^{n-s} \sum_{\ell} & \wh{\fD^s f}_{m,\ell}^{n-s, (\b+s,0)}  (-1)^s
        \int_{t}^{1} \frac{(v-t)^{s-1}}{(s-1)!} P_{n-s}^{(\a+2m,0)}(1-2v) \d v \, Y_\ell^m(x) \\
   & = (-1)^s  \int_{t}^{1}  \frac{(v-t)^{s-1}}{(s-1)!}  \sum_{m = 0}^{n-s} \sum_{\ell}
     \wh{\fD^s f}_{m,\ell}^{n-s, (\b+s,0)}  \sS_{m,\ell}^{n-s, (\b+s,0)} (x,v) \d v\\
  &  =  (-1)^s \int_{t}^{1} \frac{(v-t)^{s-1}}{(s-1)!} \proj_{n-s}^{\b+s,0}\fD^s f (x, v) \d v.
\end{align*}
Putting the two terms together completes the proof.
\end{proof}

\begin{cor}\label{cor:proj-partial}
Let $s$ be a positive integer. For $\b > -d-s$, let $f$ be a differentiable function such that
$\fD^s f \in L^2(\VV_0^{d+1}, \sw_{\b+s,0})$. Then, for $n  \ge s$,
$$
  \fD^s \proj_n^{\b,-s} f(x,t) = \proj_{n-s}^{\b+s,0}\left(\fD^s f\right) (x,t).
$$
\end{cor}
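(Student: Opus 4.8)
The plan is to differentiate, term by term, the orthogonal expansion $\proj_n^{\b,-s}f = \sum_{m=0}^{n}\sum_{\ell}\hat f_{m,\ell}^{n,(\b,-s)}\sS_{m,\ell}^{n,(\b,-s)}$, using the two identities already extracted in the proof of Theorem~\ref{thm:proj-s=int} and in Corollary~\ref{cor:der_sS}. Write $\a = s+\b+d-1$ as there. First I would invoke \eqref{eq:D_sS}: for $0\le m\le n-s$ one has $\partial_t^s\sS_{m,\ell}^{n,(\b,-s)} = (-2)^s A_{n-s-m}^{(2m+\a,0)}\,\sS_{m,\ell}^{n-s,(\b+s,0)}$, whereas for $n-s<m\le n$ the polynomial $\sS_{m,\ell}^{n,(\b,-s)}$ has degree $n-m<s$ in $t$ (cf.\ \eqref{eq:sS-explicit}), so $\partial_t^s$ annihilates it; thus only the terms with $0\le m\le n-s$ survive the differentiation. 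Then I would substitute \eqref{eq:FourierCoeffs}, namely $\hat f_{m,\ell}^{n,(\b,-s)} = \wh{\partial_t^s f}_{m,\ell}^{n-s,(\b+s,0)}\big/\bigl((-2)^s A_{n-m-s}^{(2m+\a,0)}\bigr)$, so that the factors $(-2)^s A_{n-m-s}^{(2m+\a,0)}$ cancel, leaving
$$
  \partial_t^s\proj_n^{\b,-s}f = \sum_{m=0}^{n-s}\sum_{\ell}\wh{\partial_t^s f}_{m,\ell}^{n-s,(\b+s,0)}\,\sS_{m,\ell}^{n-s,(\b+s,0)} = \proj_{n-s}^{\b+s,0}\bigl(\partial_t^s f\bigr),
$$
which is the assertion.

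Equivalently — and this is the same computation repackaged — one can differentiate the closed formula \eqref{eq:proj-s=int} directly. For fixed $x$, the first sum there is a polynomial in $t$ of degree at most $s-1$, hence killed by $\partial_t^s$; and for the remaining term one uses the elementary identity $\dfrac{d^s}{dt^s}\displaystyle\int_t^1 \frac{(v-t)^{s-1}}{(s-1)!}\,h(v)\,\d v = (-1)^s h(t)$ (valid for $h\in C^s$, obtained by iterating the fundamental theorem of calculus), applied with $h(v) = \proj_{n-s}^{\b+s,0}(\partial_t^s f)(x,v)$, which is a polynomial in $v$; the $(-1)^s$ in front of the integral then cancels the $(-1)^s$ produced, and one is left again with $\proj_{n-s}^{\b+s,0}(\partial_t^s f)(x,t)$.

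I do not expect a genuine obstacle here: all of the delicate normalization bookkeeping — the exact values of $\hat f_{m,\ell}^{n,(\b,-s)}$ in \eqref{eq:FourierCoeffs} and of $\partial_t^s\sS_{m,\ell}^{n,(\b,-s)}$ in \eqref{eq:D_sS} — has already been carried out, so the corollary is immediate once those are assembled. The only point deserving a sentence of justification is the vanishing of the boundary contribution under $\partial_t^s$, which one reads off either from the degree count for the basis polynomials $\sS_{m,\ell}^{n,(\b,-s)}$ with $n-m\le s-1$ or directly from Corollary~\ref{cor:der_sS}.
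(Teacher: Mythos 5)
Your proposal is correct, and your second argument --- differentiating the closed formula \eqref{eq:proj-s=int}, noting that the first sum is a polynomial of degree at most $s-1$ in $t$ and that $\partial_t^s$ applied to the Taylor-remainder integral returns $(-1)^s$ times the integrand --- is exactly the paper's proof. Your first, term-by-term route via \eqref{eq:D_sS} and \eqref{eq:FourierCoeffs} is the same computation unpacked and is equally valid, so there is nothing to add.
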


\begin{proof}
Taking the derivative $\fD^s$ of \eqref{eq:proj-s=int}, it follows from \eqref{eq:fD1} that the first term in the right-hand
side is zero, while the second term gives the stated identity.
\end{proof}

Another consequence of Theorem \ref{thm:proj-s=int} is an expression for the error of approximation.
Let $\eta \in C^\infty(\RR_+)$ be an admissible cut-off function. We denote by $\sQ_{n,\eta}^{(\beta, -s)}$
the near best approximation operator defined in \eqref{eq:Qn-eta} but with $\g = -s$. Furthermore, let
$$
\sQ_{n-m}^\sph f (\xi)= \sum_{k=0}^{2(n-m)} \eta\left(\frac{k}{n}\right) \proj_k^\sph f(\xi), \quad 0 \le m \le n,
$$
be a near-best approximation operator of degree $n-m$ on the unit sphere. For $f\in L^p(\sph)$, let
$$
  E_n^\sph(f)_p = \inf_{Y \in \Pi_n(\sph)} \| f - Y\|_{L^p(\sph)}, \qquad 1 \le p \le \infty
$$
be the error of best approximation by polynomials on the unit sphere, where the norm is the uniform norm for
$p = \infty$ and $\Pi_n(\sph)$ denotes the space of polynomials of degree at most $n$ restricted on the unit
sphere $\sph$. Then (cf. \cite[Theorem 2.6.3.]{DaiX})
$$
   \left  \|f - \sQ_{n}^\sph f \right \|_p \le c E_n^{\sph}(f)_p, \qquad 1 \le p \le \infty.
$$

\begin{thm} \label{thm:errorSOP}
For $f \in C^s(\VV_0^{d+1})$,
\begin{align*}
  f(x,t) - \sQ_{n,\eta}^{(\beta, -s)}f(x,t)  \,& = \sum_{m=0}^{s-1} \frac{(t-1)^m}{m!} \left[ \fD^m f(x,,1)
     - \sQ_{n-m,\eta}^{\sph} \left[\fD^m f(\cdot,1)\right](x) \right] \\
   & +  (-1)^s \int_{t}^{1} \frac{(v-t)^{s-1}}{(s-1)!} \left[ \fD^s f (x, v)-
      \sQ_{n-s,\eta}^{\b+s,0}\left(\fD^s f\right) (x, v) \right] \d v.
\end{align*}
\iffalse
In particular, for $0 \le k \le s$,
\begin{equation} \label{eq:bestSOPseries}
  \left \|  f -   \sQ_{n,\eta}^{(\beta, -s)}f \right \|_{W_\infty^s }
      \le c \left[\max_{0\le m \le s} E_{n-m}^\SS \left(\partial_t^m [f(\cdot ,1)] \right)_{\infty}
        + E_{n-s}^{\b+s,0} \left(\partial_t^s f \right)_{\infty} \right].
\end{equation}
\fi
\end{thm}

\begin{proof}
Multiplying \eqref{eq:proj-s=int} by $\eta\left( \frac{k}{n} \right)$ and summing up over $k$ gives an expression
of $\sQ_{n,\eta}^{(\b,-s)} f$. Furthermore, by \eqref{eq:fD0}, the Tylor expansion of $t\rightarrow f(x,t)$ with respect to the $t$
variable at $t =1$, together with its remainder formula, gives 
$$
  f(x,t) = \sum_{m=0}^{s-1} \frac{(t-1)^m}{m!}  \fD^m f(x,1) +
   (-1)^s \int_{t}^{1} \frac{(v-t)^{s-1}}{(s-1)!} \fD^s f (x, v) \d v.
$$
Together, the difference of the two identities gives the error formula for $f -\sQ_{n,\eta}^{(\b,-s)} f$ and completes
the proof.
\end{proof}

Taking the $k$-th order derivative with respect to the $t$ variable for $ 0 \le t \le s-1$, we obtain, by \eqref{eq:fD1},
\begin{align*}
 \fD^k \left(f  - \sQ_{n,\eta}^{(\beta, -s)}f \right)&(x,t)  = \sum_{m= k}^{s-1} \frac{(t-1)^{m-k}}{(m-k)!}
      \left [  \fD^m f(x,1)
     - \sQ_{n-m,\eta}^{\sph} [ \fD^m f(\cdot,1)](x) \right] \\
   & +  (-1)^{s-k} \int_{t}^{1} \frac{(v-t)^{s-k-1}}{(s-k-1)!} \left[  \fD^s f (x, v)-
      \sQ_{n-s,\eta}^{(\b+s,0)}\left( \fD^s f\right) (x, v) \right] \d v,
\end{align*}
moreover, taking the $\fD^s$ derivative in $(x,t)$ variable, we obtain
\begin{equation} \label{eq:partial-s}
     \fD^s \left(f  - \sQ_{n,\eta}^{(\beta, -s)}f \right)(x,t) =  \fD^s f(x,t)-\sQ_{n-s,\eta}^{(\b+s,0)}\left( \fD^s f\right) (x, t).
\end{equation}
In the above error formulas for $1\le k \le s-1$, the integral is over $v \in [t,1]$ while the variable is
$(x,v) = (t \xi, v)$. As it is, we cannot deduce the error estimate for $\fD^k \big(f  - \sQ_{n,\eta}^{(\beta, -s)}f \big)$ 
immediately from that of $f -  \sQ_{n-s,\eta}^{(\b+s,0)}\big(\fD^s f\big)$ over $\VV_0^{d+1}$ if $1 \le k \le s-1$,
although we can if $k =s$ by \eqref{eq:partial-s}. We state the latter case as a corollary.

\begin{cor}
Let $f \in W_p^s(\sw_{\b+s,0})$. Then, for $1 \le p \le \infty$,
$$
   \left \|\fD^s f - \fD^s \sQ_{n,\eta}^{(\beta, -s)}f \right\|_{p, \sw_{\b+s,0}}
    \le c E_n\left( \fD^s f \right)_{p, \sw_{\b+s,0}}.
$$
\end{cor}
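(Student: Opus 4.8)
The plan is to obtain the estimate directly from identity \eqref{eq:partial-s}, which already exhibits $\partial_t^s\big(f-\sQ_{n,\eta}^{(\beta,-s)}f\big)$ as the error of a near-best approximation operator applied to $\partial_t^s f$; once that identity is in hand, almost nothing remains.

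First I would make the identity behind \eqref{eq:partial-s} explicit, which also removes any need for a density argument between $C^s(\VV_0^{d+1})$ and $W_p^s(\sw_{\b+s,0})$. Differentiating the defining sum $\sQ_{n,\eta}^{(\beta,-s)}f=\sum_{k=0}^{2n}\eta(k/n)\proj_k^{\b,-s}f$ term by term, I use Corollary~\ref{cor:proj-partial} to replace $\partial_t^s\proj_k^{\b,-s}f$ by $\proj_{k-s}^{\b+s,0}(\partial_t^s f)$ for $k\ge s$, and I observe that for $k<s$ the polynomial $\proj_k^{\b,-s}f\in\CV_k^d(\sw_{\b,-s})$ has degree $k<s$, hence is annihilated by $\partial_t^s$. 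Reindexing $j=k-s$ yields $\partial_t^s\sQ_{n,\eta}^{(\beta,-s)}f=\sum_{j=0}^{2n-s}\eta\big(\tfrac{j+s}{n}\big)\proj_j^{\b+s,0}(\partial_t^s f)$, an operator of the form \eqref{eq:wtQ}, which I write as $\sQ_{n-s,\eta}^{(\b+s,0)}(\partial_t^s f)$; therefore $\partial_t^s f-\partial_t^s\sQ_{n,\eta}^{(\beta,-s)}f=\partial_t^s f-\sQ_{n-s,\eta}^{(\b+s,0)}(\partial_t^s f)$.

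It then suffices to bound the right-hand side. Since $s$ is a fixed integer, the cut-off $\eta\big(\tfrac{j+s}{n}\big)$ plays the same role as $\eta(j/n)$: it equals $1$ for $j\le n-s$, is supported in $j\le 2n-s$, and preserves both the uniform $L^p(\VV_0^{d+1},\sw_{\b+s,0})$-boundedness of the de la Vall\'ee Poussin type operators of \cite{X21} and the reproduction of polynomials on the cone. Hence \eqref{eq:Qn-etaEstimate} holds verbatim with $\sQ_{n-s,\eta}^{(\b+s,0)}(\partial_t^s f)$ in place of $Q_{n,\eta}^{(\b,\g)}f$, giving $\big\|\partial_t^s f-\sQ_{n-s,\eta}^{(\b+s,0)}(\partial_t^s f)\big\|_{p,\sw_{\b+s,0}}\le c\,E_{n-s}\big(\partial_t^s f\big)_{p,\sw_{\b+s,0}}$, which is the asserted inequality (as in Corollary~\ref{cor:approxQ}, the degree is shifted by the fixed integer $s$). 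This is exactly the $k=s$ instance of the computation in the proof of Corollary~\ref{cor:approxQ}, so I do not expect a genuine obstacle; the only substantive point is \eqref{eq:partial-s} itself, whose force is that for $k=s$ the integral remainder in Theorem~\ref{thm:errorSOP} collapses and the polynomial part of the Taylor expansion is killed by $\partial_t^s$ — whereas for $1\le k\le s-1$ the surviving integral runs over $v\in[t,1]$ with the spatial variable frozen at $t\xi$ rather than $v\xi$, and so cannot be read as an error over $\VV_0^{d+1}$.
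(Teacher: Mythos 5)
Your argument is correct and lands on the same final step as the paper, but you reach the key identity by a somewhat different route. The paper treats the corollary as an immediate consequence of \eqref{eq:partial-s}, which it obtains by applying $\partial_t^s$ to the Taylor-remainder error formula of Theorem \ref{thm:errorSOP} (itself built on Theorem \ref{thm:proj-s=int}); you instead differentiate the defining sum of $\sQ_{n,\eta}^{(\beta,-s)}f$ term by term, invoking Corollary \ref{cor:proj-partial} for the terms with $k\ge s$ and the degree observation for $k<s$, which gives $\partial_t^s \sQ_{n,\eta}^{(\beta,-s)}f=\sum_{j=0}^{2n-s}\eta\big(\tfrac{j+s}{n}\big)\proj_j^{\b+s,0}(\partial_t^s f)$ directly, i.e.\ the operator the paper denotes $\sQ_{n-s,\eta}^{(\b+s,0)}(\partial_t^s f)$ (cf.\ \eqref{eq:wtQ}). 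Your derivation bypasses Theorem \ref{thm:errorSOP} entirely, which is a legitimate economy if one only wants the $k=s$ case; the paper's route produces, along the way, the error representations for all intermediate derivatives $0\le k\le s-1$, which is why it proceeds through the Taylor expansion. The concluding step---the near-best estimate \eqref{eq:Qn-etaEstimate} for the operator with the shifted cut-off $\eta\big(\tfrac{j+s}{n}\big)$, exactly as in the proof of Corollary \ref{cor:approxQ}---is identical in both arguments. One small point: since the shifted operator reproduces polynomials only up to degree $n-s$, what you actually prove is the bound with $E_{n-s}\big(\partial_t^s f\big)_{p,\sw_{\b+s,0}}$ on the right-hand side, whereas the statement displays $E_n$; this index discrepancy is inherited from the paper itself (Corollary \ref{cor:approxQ} is stated with $E_{n-k}$, and the paper's own derivation via \eqref{eq:partial-s} likewise yields $E_{n-s}$), so it is not a defect of your reasoning, but it would be cleaner to state your conclusion with $E_{n-s}$.
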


We can derive another interesting property of the projection operator for the SOPs, which relies on
the property that the Jacobi polynomial $P_n^{(\a,-s)}$ is related to $P_{n-s}^{(\a,s)}$, as seen in
\cite[(4.22.2)]{Sz}. The following lemma shows that our extension $J_n^{\a,-s}$ satisfies the same
property.

\begin{thm}
If $f(x,t) = (1-t)^s g(x,t)$, then
$$
   \proj_n^{\b,-s} f = (1-t)^s \proj_{n-s}^{\b,s} g.
$$
\end{thm}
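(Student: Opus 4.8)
The plan is to reduce the multivariate identity to the one-variable statement about the modified Jacobi polynomials, namely that
$$
  \text{if } p(t) = (1-t)^s q(t), \text{ then } \proj_n^{(\a,-s)}[p] = (1-t)^s \proj_{n-s}^{(\a,s)}[q]
$$
for the Sobolev orthogonal projections associated with $J_n^{(\a,-s)}$ and the ordinary Jacobi projections for $P_{n-s}^{(\a,s)}$, and then lift it to the cone using the product structure $\sS_{m,\ell}^{n,(\b,-s)} = J_{n-m}^{(2m+\b+d-1,-s)}(1-2t)\,Y_\ell^m(x)$. Since $f(x,t) = (1-t)^s g(x,t)$, we may expand $g$ in the basis $\sS_{m,\ell}^{n,(\b,s)}$ for $\CV_n^d(\sw_{\b,s})$, so it suffices to verify the identity when $g = \sS_{m,\ell}^{N,(\b,s)}$ for a single index. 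For such $g$, the corollary \eqref{eq:sS-explicit} gives $(1-t)^s \sS_{m,\ell}^{N,(\b,s)}(x,t) = (b_{m,N+s}^s)^{-1} \sS_{m,\ell}^{N+s,(\b,-s)}(x,t)$, so $f$ is, up to a constant, a single Sobolev basis element of degree $n := N+s$; hence $\proj_n^{\b,-s} f = f$ and $\proj_k^{\b,-s} f = 0$ for $k \ne n$, while $\proj_{n-s}^{\b,s} g = g$ and $\proj_k^{\b,s} g = 0$ for $k \ne n-s$. The desired identity then reads $(b_{m,n}^s)^{-1}\sS_{m,\ell}^{n,(\b,-s)} = (1-t)^s \sS_{m,\ell}^{n-s,(\b,s)}$, which is precisely \eqref{eq:sS-explicit} in the range $n-m \ge s$; the cases $n-m \le s-1$ cannot occur here since $n-m = N+s-m \ge s$.

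To make this rigorous one must handle convergence: write $g = \sum_{N} \proj_N^{\b,s} g$ in $L^2(\VV_0^{d+1},\sw_{\b,s})$, and check that multiplication by $(1-t)^s$ together with the projection $\proj_n^{\b,-s}$ can be applied term by term. Multiplication by the bounded factor $(1-t)^s$ maps $L^2(\sw_{\b,s})$ continuously into $L^2(\sw_{\b+s,0})$ — indeed $\int |(1-t)^s h|^2 t^{\b+s}\,\d\sm = \int |h|^2 (1-t)^{2s} t^{\b+s}\,\d\sm \le \int |h|^2 (1-t)^s t^{\b}\,\d\sm$ on the cone — so $f \in L^2(\sw_{\b+s,0})$, and one also needs $\partial_t^s f \in L^2(\sw_{\b+s,0})$ to invoke Theorem~\ref{thm:proj-s=int}; this follows from the Leibniz rule applied to $(1-t)^s g$ provided $g$ is smooth enough, which we may assume by a density argument, or one can simply argue directly on the basis expansion since $\proj_n^{\b,-s}$ is a finite-dimensional projection and both sides are bilinear-continuous in the relevant norms. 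Granting this, linearity and continuity of $h \mapsto (1-t)^s h$ and of $\proj_n^{\b,-s}$ reduce everything to the single-basis-element computation above.

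The step I expect to be the main obstacle is the bookkeeping in the single-element case: one has to be careful that the constant $b_{m,n}^s = (-1)^s 2^s A_{n-s-m}^{(2m+\b+d-1,s)}/(-n-m)_s$ appearing in \eqref{eq:sS-explicit} is consistent on both sides — that is, that $(1-t)^s$ times $\sS_{m,\ell}^{n-s,(\b,s)}$ really is the Sobolev element $\sS_{m,\ell}^{n,(\b,-s)}$ divided by exactly this constant, with no mismatch in the normalization $A$-factors when the Jacobi parameter shifts from $2m+\b+d-1$ at degree $n-s$ to the same value at degree $n$. This is exactly what \eqref{eq:sS-explicit} encodes, so the identity is essentially immediate once it is set up, but it is worth stating explicitly that $\proj_{n-s}^{\b,s}$ here denotes the ordinary orthogonal projection of Section~3 and that the claimed equality is an identity of polynomials on $\VV_0^{d+1}$, holding pointwise, after the projections have been applied.
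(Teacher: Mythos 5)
Your proposal is correct and follows essentially the same route as the paper: expand $g$ in the ordinary basis $\sS_{m,\ell}^{N,(\b,s)}$ and use \eqref{eq:Jn(a,-s)} (equivalently \eqref{eq:sS-explicit}) to identify $(1-t)^s\,\sS_{m,\ell}^{n-s,(\b,s)}$ as a constant multiple of $\sS_{m,\ell}^{n,(\b,-s)}$; the paper merely phrases this as an explicit computation of the Sobolev--Fourier coefficients $\hat f_{m,\ell}^{n,(\b,-s)}$, including the vanishing for $n-s<m\le n$, which your orthogonality argument yields automatically. Your termwise handling of the infinite expansion is at the same (informal) level of rigor as the paper's own proof, so no genuine gap is introduced.
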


\begin{proof}
By the definition of the $\sS_{m,\ell}^{n,(\b,-s)}$, it follows from \eqref{eq:Jn(a,-s)} that
$$
 \sS_{m,\ell}^{n, (\b,-s)} (x,t)= \s_{m,n} (1-t)^s \sS_{m,\ell}^{n-s, (\b,s)}(x,t), \quad
 \s_{m,n} :=  \frac{(n-s-m)!}{(n-m)!} A_{n-m}^{(2m+\b+d-1,0)}.
$$
Expanding $g$ in terms of the Fourier orthogonal series in $L^2(\VV_0^{d+1}, \sw_{\b,1})$, we obtain
\begin{align*}
 f(x,t) = (1-t)^s g(x,t) \,& = (1-t)^s \sum_{n=0}^\infty \sum_{m=0}^n \sum_\ell \hat g_{m,\ell}^{n, (\b,s)}
    \sS_{m,\ell}^{n, (\b,s)}(x,t) \\
    & = \sum_{n=0}^\infty \sum_{m=0}^n \sum_\ell \hat g_{m,\ell}^{n, (\b,s)}
    \s_{n+s,m}^{-1} \sS_{m,\ell}^{n+s, (\b,-s)}(x,t).
\end{align*}
Applying the orthogonality in the Sobolev inner product, it follows immediately that
$$
 \left \langle f,  \sS_{m,\ell}^{n+s, (\b,-s)} \right \rangle_{\b,-s} =
  \s_{n+1,m}^{-1} \hat g_{m,\ell}^{n,(\b,s)} \left \langle
    \sS_{m,\ell}^{n+s, (\b,-s)},  \sS_{m,\ell}^{n+s, (\b,-s)} \right \rangle_{\b,-s}, \quad 0 \le m \le n;
$$
in particular, with $n$ replaced by $n-s$, we then obtain
\begin{equation} \label{eq:Fourirer-1}
      \hat f_{m,\ell}^{n,(\b,-s)} = \s_{n,m}^{-s} \hat g_{m,\ell}^{n-s,(\b,s)}, \quad 0 \le m \le n-s.
\end{equation}
For $n-s < m \le n$, the polynomial $J_{n-m}^{(2m+\b+d-1,-s)}(t)$ is a polynomial of degree at most $s-1$,
so that $\fD^s \sS_{n,\ell}^{n, (\b,-1)}(x,t) = 0$; moreover, $\fD^k f(\xi,1) =0$ for $0 \le k \le s-1$,
since $f$ contains a factor $(1-t)^s$. It follows that $\hat f_{n,\ell}^{n,(\b,-s)} = 0$ for $n -s < m \le n$ by the
definition of the Sobolev inner product. Consequently, by \eqref{eq:Jn(a,-s)} and \eqref{eq:Fourirer-1} we
obtain
\begin{align*}
    (1-t) \proj_{n-s}^{\b,s} g\, & = (1-t) \sum_{m=0}^{n-s} \sum_\ell \hat g_{m,\ell}^{n-1,(\b,s)} \sS_{m,\ell}^{n-s, (\b,s)} \\
                   & = \sum_{m=0}^{n} \sum_\ell \hat f_{m,\ell}^{n,(\b,-s)} \sS_{m,\ell}^{n, (\b,-s)} = \proj_{n}^{\b,-s}f.
\end{align*}
The proof is completed.
\end{proof}

\begin{cor}
Let $f \in L^p(\VV_0^{d+1},\sw_{\b,0})$ and $f(t) = (1-t) g(t)$. Then, for $1 \le p \le \infty$,
$$
   \left \| f - \sQ_{n,\eta}^{(\beta, -s)} f \right\|_{p, \sw_{\b,0}}
    \le c E_n\left(g \right)_{p, \sw_{\b,s}} \le c E_n\left(g \right)_{p, \sw_{\b,0}}
$$
\end{cor}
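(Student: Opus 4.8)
The plan is to derive the corollary from the theorem just established together with the near best approximation estimate \eqref{eq:Qn-etaEstimate}. Write $f = (1-t)^s g$. Because $f$ carries the factor $(1-t)^s$, all boundary data $\partial_t^k f(\xi,1)$ with $0 \le k \le s-1$ vanish, so the Fourier--Sobolev coefficients of $f$ in the ``boundary regime'' $n-m\le s-1$ are zero and hence $\proj_k^{\b,-s} f = 0$ for $0\le k\le s-1$; together with the preceding theorem this yields, with the convention $\proj_j=0$ for $j<0$,
\[
  \proj_k^{\b,-s} f = (1-t)^s\,\proj_{k-s}^{\b,s} g, \qquad k=0,1,2,\ldots .
\]
Substituting this into the definition \eqref{eq:Qn-eta} of $\sQ_{n,\eta}^{(\b,-s)}$ and re-indexing $k\mapsto k-s$ in the cut-off sum, I would obtain $\sQ_{n,\eta}^{(\b,-s)} f = (1-t)^s\,\sQ_{n,\eta}^{(\b,s)} g$, where on the right $\sQ_{n,\eta}^{(\b,s)}$ is the slightly shifted operator of type \eqref{eq:wtQ} with the value $k=s$.

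Next I would subtract and take norms. From $f - \sQ_{n,\eta}^{(\b,-s)} f = (1-t)^s\bigl(g - \sQ_{n,\eta}^{(\b,s)} g\bigr)$ and the elementary bound $(1-t)^{sp}\le (1-t)^s$ on $[0,1]$ for $p\ge 1$ (respectively $(1-t)^s\le 1$ when $p=\infty$), together with the ratio of the normalizing constants $b_{\b,0}$ and $b_{\b,s}$, it follows that
\[
  \bigl\|\, f - \sQ_{n,\eta}^{(\b,-s)} f \,\bigr\|_{p,\sw_{\b,0}} \le c\,\bigl\|\, g - \sQ_{n,\eta}^{(\b,s)} g \,\bigr\|_{p,\sw_{\b,s}} .
\]
As already observed in the proof of Corollary~\ref{cor:approxQ}, a fixed shift $j\mapsto j+s$ in the argument of $\eta$ plays essentially the same role as $\eta(j/n)$, so \eqref{eq:Qn-etaEstimate} remains valid for $\sQ_{n,\eta}^{(\b,s)}$, giving $\bigl\|g - \sQ_{n,\eta}^{(\b,s)} g\bigr\|_{p,\sw_{\b,s}} \le c\,E_n(g)_{p,\sw_{\b,s}}$. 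This is the first asserted inequality. The second one is immediate: since $\sw_{\b,s}(t)=t^\b(1-t)^s\le t^\b=\sw_{\b,0}(t)$ on $[0,1]$, one has $\|\cdot\|_{p,\sw_{\b,s}}\le c\,\|\cdot\|_{p,\sw_{\b,0}}$, and taking the infimum over $P\in\Pi_n(\VV_0^{d+1})$ yields $E_n(g)_{p,\sw_{\b,s}}\le c\,E_n(g)_{p,\sw_{\b,0}}$.

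I do not expect a genuine obstacle here: the argument is essentially bookkeeping on top of the preceding theorem and \eqref{eq:Qn-etaEstimate}. The only points that need a little care are the re-indexing of the cut-off sum and the fact that the near best approximation property survives the fixed index shift --- which is precisely the device used in Corollary~\ref{cor:approxQ} --- together with the harmless weight comparison $(1-t)^{sp}\le (1-t)^s$ used to pass from $\sw_{\b,0}$ to $\sw_{\b,s}$.
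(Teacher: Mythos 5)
Your proposal is correct and follows essentially the same route as the paper: use the preceding theorem to get $\proj_k^{\b,-s}f=(1-t)^s\proj_{k-s}^{\b,s}g$, sum against the cut-off to obtain $\sQ_{n,\eta}^{(\b,-s)}f=(1-t)^s\,\wt\sQ_{n,\eta}^{(\b,s)}g$ with the shifted operator of type \eqref{eq:wtQ}, then bound $\|(1-t)^s[g-\wt\sQ_{n,\eta}^{(\b,s)}g]\|_{p,\sw_{\b,0}}$ by $\|g-\wt\sQ_{n,\eta}^{(\b,s)}g\|_{p,\sw_{\b,s}}\le cE_n(g)_{p,\sw_{\b,s}}$ via the index-shift device of Corollary \ref{cor:approxQ}, and finish with $\sw_{\b,s}\le\sw_{\b,0}$. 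The extra details you supply (vanishing boundary coefficients, the explicit re-indexing, the comparison $(1-t)^{sp}\le(1-t)^s$) are exactly the steps the paper leaves implicit.
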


\begin{proof}
Since $f = (1-t)^s g$, we see that
$$
\sQ_{n,\eta}^{(\beta, -1)} f = \sum_{k=0}^{2n} \eta\left( \frac{k}{n}\right) (1-t)^s \proj_{k-1}^{\b,s} g
    = (1-t)^s \wt \sQ_{n,\eta}^{\beta,s} g,
$$
where $\wt \sQ_{n,\eta}^{(\beta,s)}$ is defined as in the proof of Corollary \ref{cor:approxQ}.
For $p \ge 1$, we then obtain
\begin{align*}
  \left \| f - \sQ_{n,\eta}^{(\beta, -s)} f \right\|_{p, \sw_{\b,0}}
  & =  \left \| (1-t)^s \left[ g - \wt \sQ_{n,\eta}^{(\beta, s)} g \right] \right\|_{p, \sw_{\b,0}}\\
  & \le \left \|g - \wt \sQ_{n,\eta}^{(\beta, s)} g \right\|_{p, \sw_{\b,s}} \le c E_n(g)_{p, \sw_{\b,s}},
\end{align*}
By the definition of $E_n(g)_{p, \sw}$, it follows immediately $E_n(g)_{p, \sw_{\b,s}} \le E_n(g)_{p, \sw_{\b,0}}$.
The proof is completed.
\end{proof}

\iffalse
Stating explicitly, the identity \eqref{eq:Fourirer-1} gives
\begin{align*}
 \frac{1}{\o_d} \int_{\VV_0^{d+1}}  & \frac{\partial}{\partial t} \left[(1-t) g(x,t)\right]  \frac{\partial}{\partial t}
     \sS_{m,\ell}^{n, (\b,-1)} (x,t) \sw_{\b+1,0}(t) \d \s (x,t) / h_{n,m}^{\b,S}\\
   &= \s_{n,m}^{-1} b_{\b,1} \int_{\VV_0^{d+1}}  g(x,t)  \sS_{m,\ell}^{n-1, (\b,1)} (x,t) \sw_{\b,1}(t) \d \s (x,t) /h_{n,m}^{\b,1}
\end{align*}
This gives
\begin{align*}
 \frac{\s_{n,m}}{\o_d} \int_{\VV_0^{d+1}}  & \frac{\partial}{\partial t} \left[(1-t) g(x,t)\right]  \frac{\partial}{\partial t}
     \left[(1-t)\sS_{m,\ell}^{n-1, (\b,1)} (x,t)\right]
   \sw_{\b+1,0}(t) \d \s (x,t) / h_{n,m}^{\b,S} \\
   &= \s_{n,m}^{-1} b_{\b,1} \int_{\VV_0^{d+1}}  g(x,t)  \sS_{m,\ell}^{n-1, (\b,1)} (x,t)
       \sw_{\b,1}(t) \d \s (x,t) /h_{n-1,m}^{\b,1}
\end{align*}
\fi

\section{Partial differential equation for the Sobolev orthogonal polynomials}
\setcounter{equation}{0}

This section considers partial differential equations satisfied by the Sobolev orthogonal polynomials.
As we mentioned in Theorem \ref{thm:Jacobi-DE-V0} proved in \cite{X20}, the ordinary orthogonal polynomials
in $\CV_n(\VV_0^{d+1}, \sw_{-1,\g})$, $\g > -1$, are eigenfunctions of the differential operator $\CD_{\g}$,
defined in \eqref{eq:diff-op}. For the Sobolev orthogonal polynomials, we need $\g$ to be negative integers.
We first consider the action of the differential operator when $\g$ is a real number.

\subsection{Eigenfunctions of $\CD_{\g}$ for $\g \in \RR$}
 Recall that the operator $\CD_{\g}$ is given by
 $$
   \CD_{\g} =  t(1-t)\frac{\d^2}{\d t^2} + \big( d-1 - (d+\g)t \big) \frac{\d}{\d t}+ t^{-1} \Delta_0^{(\xi)}.
 $$
We start with a lemma for the action of this operator.

\begin{lem}
Let $p(t)$ be a polynomial in the $t$ variable and $Y(\xi)$ be a function defined on the unit sphere $\sph$.
For $k \geq 1$,
\begin{align}\label{eq:D_gamma}
\CD_{\gamma} \left [(1-t)^k\,p(t) Y(\xi) \right] \, & = (1-t)^k \CD_{\gamma+2k}\left [p(t) Y (\xi)\right] \\
& - k\,(1-t)^{k-1} [d-1-(d+\g+k-1)t] p(t)Y(\xi). \notag
\end{align}
\end{lem}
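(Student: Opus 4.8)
The plan is to reduce the statement to a one-variable computation. Since $(1-t)^k p(t)$ does not depend on $\xi$, the spherical part of $\CD_{\g}$ acts only on $Y$: we have $t^{-1}\Delta_0^{(\xi)}\big[(1-t)^k p(t) Y(\xi)\big] = (1-t)^k p(t)\, t^{-1}\Delta_0^{(\xi)} Y(\xi)$, and exactly the same quantity (multiplied by $(1-t)^k$) occurs inside $(1-t)^k \CD_{\g+2k}\big[p(t)Y(\xi)\big]$. Hence the contributions of $t^{-1}\Delta_0^{(\xi)}$ to the two sides of \eqref{eq:D_gamma} cancel, the factor $Y(\xi)$ rides along unchanged, and it suffices to prove the purely $t$-dependent identity
$$
  L_{\g}\big[(1-t)^k p(t)\big] = (1-t)^k L_{\g+2k}\big[p(t)\big] - k(1-t)^{k-1}\big[d-1-(d+\g+k-1)t\big]p(t),
$$
where $L_{\g} := t(1-t)\partial_t^2 + \big(d-1-(d+\g)t\big)\partial_t$.

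Next I would set $u(t) = (1-t)^k p(t)$ and differentiate by the product rule, obtaining $u' = -k(1-t)^{k-1}p + (1-t)^k p'$ and $u'' = k(k-1)(1-t)^{k-2}p - 2k(1-t)^{k-1}p' + (1-t)^k p''$, with the usual convention that the $k(k-1)$ coefficient removes the negative power when $k=1$. Substituting these into $L_{\g}[u]$ and subtracting $(1-t)^k L_{\g+2k}[p]$, I would collect coefficients of $(1-t)^{k+1}$, $(1-t)^k$, and $(1-t)^{k-1}$ separately. The top-order term $t(1-t)\cdot(1-t)^k p''$ is common to both sides and cancels; the coefficient of $p'$ is $-2kt(1-t)^k$ coming from $t(1-t)u''$ together with $\big[(d-1-(d+\g)t)-(d-1-(d+\g+2k)t)\big](1-t)^k p' = 2kt(1-t)^k p'$, so these also cancel.

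What remains is the coefficient of $p$, namely $k(k-1)t(1-t)^{k-1} - k(1-t)^{k-1}\big(d-1-(d+\g)t\big)$; factoring out $k(1-t)^{k-1}$ and simplifying the bracket $\big[(k-1)t - (d-1-(d+\g)t)\big] = -\big[d-1-(d+\g+k-1)t\big]$ yields precisely the stated correction term. There is no real obstacle in this argument — it is a short product-rule computation — so the only points requiring care are tracking the parameter shift $\g \mapsto \g+2k$ correctly and verifying that the $p'$-terms genuinely cancel; I would display the three coefficient computations explicitly to make this transparent.
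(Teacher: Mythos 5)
Your proposal is correct, but it takes a different route from the paper. You reduce the identity to the one-variable operator $L_{\g}=t(1-t)\partial_t^2+\bigl(d-1-(d+\g)t\bigr)\partial_t$ (noting, correctly, that the spherical term $t^{-1}\Delta_0^{(\xi)}$ contributes $(1-t)^k p(t)\,t^{-1}\Delta_0 Y(\xi)$ to both sides and hence drops out), and then verify the identity for general $k$ by a single product-rule computation, matching the coefficients of $p''$, $p'$ and $p$; your bracket simplification $(k-1)t-\bigl(d-1-(d+\g)t\bigr)=-\bigl[d-1-(d+\g+k-1)t\bigr]$ is exactly what produces the correction term, and your remark that the $k(k-1)(1-t)^{k-2}$ term is harmless at $k=1$ handles the only boundary case. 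The paper instead proves the case $k=1$ by a short direct computation and then obtains general $k$ by induction, writing $(1-t)^{k+1}p\,Y=(1-t)^k\cdot\bigl[(1-t)p\,Y\bigr]$ and applying the inductive hypothesis followed by the $k=1$ case with $\g$ replaced by $\g+2k$. The two arguments are of comparable length: your direct computation gives all $k$ at once and makes the cancellation of the $p'$-terms (i.e.\ the parameter shift $\g\mapsto\g+2k$) completely explicit, while the paper's induction keeps each individual computation to first and second derivatives of the single factor $(1-t)$, at the cost of an induction step in which the parameter bookkeeping $\g+2k\mapsto\g+2k+2$ and the merging of the two correction terms must be tracked. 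Either proof is acceptable; yours is, if anything, slightly more self-contained.
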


\begin{proof}
For $k=1$, a quick computation from the definition of $\CD_{\g}$ shows
\begin{align*}
\CD_{\g}[(1-t)  p(t) Y(\xi)]
%= \, &  \left (t \,(1-t)\,\partial_{t}^2[(1-t) p(t)] +
%[d-1+(d+\gamma)t]\,\partial_t[(1-t)p(t)]\right) Y(\xi) \\
% & + t^{-1}\,(1-t) p \, \Delta_0^{(\xi)}Y \\
% =\,& \Big(t \,(1-t)[(1-t)p'' -2p'] + [d-1-(d+\gamma)t][(1-t)p' -p]\Big)q \\
% & - t^{-1}\,(1-t) p \, \Delta_0^{(\xi)}q \\
 =\, & \big((1-t) \left[t(1-t)p''(t) +  [d-1-(d+\gamma+2)t]p'(t)\right] \\
    &  - [d-1-(d+\gamma)t]p(t)\big) Y(\xi)
     + t^{-1}\,(1-t) p(t) \, \Delta_0^{(\xi)}Y(\xi) \\
= \, & (1-t)\CD_{\gamma+2}[p(t)Y(\xi)] - [d-1-(d+\gamma)t]p(t)Y(\xi),
\end{align*}
which verifies \eqref{eq:D_gamma} for $k=1$. Assume that \eqref{eq:D_gamma} holds for $k\geq 1$.
Then
\begin{align*}
& \CD_{\gamma}[(1-t)^{k+1} p(t) Y(\xi)] = \CD_{\g} [(1-t)^k (1-t) p(t) Y(\xi)] \\
& \quad = (1-t)^k \CD_{\g+2k}[(1-t)p(t) Y(\xi)] \\
&\qquad - k\,(1-t)^{k-1} [d-1-(d+\gamma+k-1)t][(1-t)p(t) Y(\xi)]\\
& \quad =  (1-t)^k \left\{(1-t)\CD_{\g+2k+2}[p(t) Y(\xi)] - [d-1-(d+\gamma+2k)t]p(t) Y(\xi)\right\}\\
&\qquad - k\,(1-t)^{k} [d-1-(d+\gamma+k-1)t]p(t) Y(\xi)\\
& \quad = (1-t)^{k+1} \CD_{\g+2k+2}[p(t) Y(\xi)] - (k+1)(1-t)^k [d-1-(d+\gamma+k)t]p(t) Y(\xi),
\end{align*}
so that \eqref{eq:D_gamma} holds for $k+1$ and the proof is completed by induction.
\end{proof}

\iffalse
\begin{lem}\label{lem:ED_Y}
For $\g\in\RR$ and $Y\in\CH_m^{d,0}$, $m \ge 0$,
$$
\CD_{\gamma} Y= \lambda_m^{(\g)} Y, \qquad \lambda_m^{(\g)}=-m(m+\g+d-1).
$$
\end{lem}

\begin{proof}
Applying the operator $\mathcal{D}_{-1,\g}$ to $Y=t^m Y(\xi)$ we get
\begin{align*}
\mathcal{D}_{-1,\g}[Y]= & \mathcal{D}_{-1,\g}[t^m Y(\xi)] = t(1-t) m(m-1) t^{m-2} Y(\xi) + [d-1-(d+\g)t]m\, t^{m-1} Y(\xi) \\
 & - t^{-1} t^m  \, \Delta_0^{(\xi)} Y(\xi) \\
 = & (1-t) m(m-1) t^{m-1} Y(\xi) + m [d-1-(d+\g)t] t^{m-1} Y(\xi) \\
 & -  t^{m-1}  m(m+d-2) Y(\xi) \\
 = & m t^{m-1} Y(\xi)[(m-1)-(m-1)t+d-1-(d+\g)t-(m+d-2)] \\
 = & -m t^{m-1} Y(\xi)(m+d+\g-1)t \\
 = & -m(m+d+\g-1) t^m Y(\xi) = -m(m+d+\g-1) Y.
\end{align*}
\end{proof}
\fi

\begin{thm}\label{thm:Z}
For $0\le  j \le n$, let $Z_{j,n}(x,t) = p(t) Y(x)$ with $p$ being a polynomial of degree $j$ in one variable and
$Y$ be a solid harmonic polynomial in $\CH_{n-j}^{d,0}$. For $\gamma \in \RR$, the only polynomial $p$ for
which $Z_{j,n}$ satisfies
\begin{equation}\label{eq:Z}
     \CD_{\g} Z (x,t) = \lambda_n^{(\gamma)} Z(x,t),  \qquad \l_n^{(\g)} =  -n(n+\g+d-1),
\end{equation}
is the Jacobi polynomial $p(t) = c_j\,P_j^{(2n-2j+d-2, \gamma)}(1-2t)$, where $c_j$ is an appropriate constant,
if $2n+\gamma+d-r-1\neq 0$ for $0\leq r\leq j$.
\end{thm}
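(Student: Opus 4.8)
The plan is to turn the partial differential equation $\CD_\gamma Z_{j,n} = \lambda_n^{(\gamma)} Z_{j,n}$ into an ordinary differential equation in $t$ and then analyze that ODE. Write $m = n-j$. Since $Y\in\CH_m^{d,0}$, on the conic surface we have $Y(t\xi) = t^m Y(\xi)$ and $\Delta_0^{(\xi)} Y(\xi) = -m(m+d-2)Y(\xi)$, so $Z_{j,n}(x,t) = t^m p(t) Y(\xi)$. The first step is to substitute $h(t) = t^m p(t)$ into $\CD_\gamma = t(1-t)\tfrac{\d^2}{\d t^2} + (d-1-(d+\gamma)t)\tfrac{\d}{\d t} + t^{-1}\Delta_0^{(\xi)}$, expand $t(1-t)h''$ and $(d-1-(d+\gamma)t)h'$, and use the eigenvalue of $\Delta_0^{(\xi)}$ on $Y$. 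The terms carrying $t^{-1}$ that are produced by $t^{-1}\Delta_0^{(\xi)}$, by $t(1-t)h''$ and by $(d-1-(d+\gamma)t)h'$ cancel, and after dividing by $t^m Y(\xi)$ and using the identity $m(m+d+\gamma-1)-n(n+\gamma+d-1) = -j(j+2m+d+\gamma-1)$, the eigenequation becomes
$$
  t(1-t)\, p''(t) + \big(2m+d-1-(2m+d+\gamma)t\big)\, p'(t) + j\big(j+2m+d+\gamma-1\big)\, p(t) = 0 .
$$

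This displayed equation is precisely the Jacobi differential equation satisfied by $y(t) = P_j^{(\alpha,\beta)}(1-2t)$ with $\alpha = 2m+d-2 = 2n-2j+d-2$ and $\beta = \gamma$, so $p(t) = c_j P_j^{(2n-2j+d-2,\gamma)}(1-2t)$ is a solution for any constant $c_j$; this gives the existence half. For uniqueness, let $L[p]$ denote the left-hand side above, viewed as a linear operator on the $(j{+}1)$-dimensional space $\CP_j$ of polynomials in $t$ of degree at most $j$. A direct computation gives
$$
  L[t^i] = i(i+2m+d-2)\, t^{i-1} + (j-i)\big(j+i+2m+d+\gamma-1\big)\, t^i , \qquad 0 \le i \le j ,
$$
so the coefficient of $t^j$ in $L[p]$ vanishes identically and $L$ maps $\CP_j$ into $\CP_{j-1}$. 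Since $j+i+2m+d+\gamma-1 = 2n+\gamma+d-1-(j-i)$ and $j-i$ runs over $\{1,\dots,j\}$ as $i$ runs over $\{0,\dots,j-1\}$, the hypothesis $2n+\gamma+d-r-1\neq 0$ for $0\le r\le j$ makes each coefficient $(j-i)(j+i+2m+d+\gamma-1)$ nonzero; a back-substitution argument on the monomials $1,t,\dots,t^{j-1}$ then shows $L:\CP_j\to\CP_{j-1}$ is onto, hence $\dim\ker L = 1$. The same non-vanishing also forbids any nonzero solution of $L[p]=0$ of degree strictly less than $j$ (inspect the leading coefficient of such a $p$ in $L[p]=0$), so the unique-up-to-scalar kernel element has degree exactly $j$ and must therefore be $c_j P_j^{(2n-2j+d-2,\gamma)}(1-2t)$, which is the assertion.

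The reduction in the first step is routine but has to be carried out carefully, as it relies on $Y$ being a \emph{solid harmonic}, so that $\Delta_0^{(\xi)} Y$ is a scalar multiple of $Y$ and the $t^{-1}$ singularities cancel. The genuinely delicate part is the uniqueness argument: one must confirm that the stated non-vanishing hypothesis is exactly what is needed both to make $\ker L$ one-dimensional and to exclude lower-degree (degree-reduced) polynomial solutions, and to match its index range $0\le r\le j$ with the coefficients $(j-i)(j+i+2m+d+\gamma-1)$ appearing in $L[t^i]$.
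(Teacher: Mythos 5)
Your proposal is correct, but it runs along a different track than the paper's argument. The paper never forms the ordinary differential equation in $t$: it expands $p$ in the shifted basis $(1-t)^i$, applies the identity for $\CD_\g\left[(1-t)^k p(t)Y(\xi)\right]$ (its Lemma 5.1) together with $\CD_\g Y = -m(m+d+\g-1)Y$ for solid harmonics, and extracts a two-term recurrence for the coefficients $a_{j,i}$, whose denominators $(j-i)(2n+\g+d-1-j+i)$ are nonzero exactly under the stated hypothesis; solving the recurrence from $a_{j,j}=1$ gives the coefficients explicitly and identifies $p$ as a ${}_2F_1$, i.e.\ $c_j P_j^{(2n-2j+d-2,\g)}(1-2t)$, so existence and uniqueness come out simultaneously and constructively. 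You instead reduce the eigenequation to the classical Jacobi equation $t(1-t)p''+(2m+d-1-(2m+d+\g)t)p'+j(j+2m+d+\g-1)p=0$ (your reduction and the identity $n(n+\g+d-1)-m(m+\g+d-1)=j(j+2m+\g+d-1)$ check out), get existence for free, and prove uniqueness by the kernel-dimension argument: $L[t^i]=i(i+2m+d-2)t^{i-1}+(j-i)(2n+\g+d-1-(j-i))t^i$, so under the hypothesis $L$ acts triangularly with nonzero diagonal on $\operatorname{span}\{1,\dots,t^{j-1}\}$, is onto $\CP_{j-1}$, hence $\dim\ker L=1$, and the same nonvanishing kills any solution of degree $<j$. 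This buys a cleaner separation of existence from uniqueness and makes transparent why the hypothesis is exactly the no-degree-reduction condition for $P_j^{(2n-2j+d-2,\g)}$, whereas the paper's route yields the explicit coefficient formula \eqref{def:a} and reuses its operator lemma. The only point you leave implicit is that $P_j^{(2n-2j+d-2,\g)}(1-2t)$ is a nonzero element of $\CP_j$ (e.g.\ its value at $t=0$ is $(2n-2j+d-1)_j/j!\neq 0$), so that it indeed spans the one-dimensional kernel; with that one-line remark your argument is complete. Note also that, like the paper, you only use the hypothesis for $r=1,\dots,j$; the case $r=0$ is not needed.
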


\begin{proof}
Without losing generality, we assume $p$ is a monic polynomial,
$$
p(t) = \sum_{i=0}^j \,(1-t)^i\,a_{j,i}, \quad a_{j,j} = 1.
$$
Our objective is to determine the coefficients $a_{j,i}$ such that $Z_j$ satisfies \eqref{eq:Z}. First, we observe
that \eqref{eq:Z} holds if $p(t) =1$. In this case $Y \in \CH_n^{d,0}$ and $Y(x) = t^n Y(\xi)$. Hence, taking
the derivative over $t$ and using the relation \eqref{eq:LB-operator}, a quick computation shows that
\begin{align} \label{eq:Delta0Y}
\CD_{\g} Y(x)= &\, \CD_{\g}[t^n Y(\xi)] \\
 = &\, n (n+d-2)Y(\xi) - m (m+\g+d-1) t^m Y(\xi) + t^{m-1}  \Delta_0 Y(\xi) \notag \\
 =  &\, -m(m+d+\g-1) t^m Y(\xi) = -m(m+d+\g-1) Y(x). \notag
\end{align}
For $j \ge 0$, we use the identity \eqref{eq:D_gamma} to obtain
\begin{align*}
& \CD_{\g} Z_j(x,t)  =  \sum_{i=0}^j  a_{j,i} \CD_{\g}\left[(1-t)^i t^{n-j} Y(\xi)\right] \\
& = \sum_{i=0}^j  a_{j,i} \left[(1-t)^i \CD_{\g+ 2i}[Y(x)] - i(1-t)^{i-1}[d-1-(d+\gamma+i-1)t]  Y(x)\right]
\end{align*}
and then apply \eqref{eq:Delta0Y} for $Y\in \CH_{n-j}^{d,0}$ and simplify to obtain
\begin{align*}
\CD_{\g} Z_j(x,t)  =
%&  \sum_{i=0}^j \,(1-t)^i \,a_{j,i}^{n, \gamma}\,\lambda_{n-j}^{(\gamma+2i)}\,Y -
%\sum_{i=1}^j \,i\,(1-t)^{i}\,(d+\gamma+i-1) \,a_{j,i}^{n, \gamma}\,Y\\
%&+\sum_{i=1}^j \,i\,(1-t)^{i-1}\,(\gamma+i) \,a_{j,i}^{n, \gamma}\,Y\\
\, & \sum_{i=0}^{j} \,(1-t)^i \,a_{j,i} [\lambda_{n-j}^{(\gamma+2i)} - i\,(d+\gamma+i-1)]\,Y(x)\\
&+\sum_{i=0}^{j-1} (i+1)(1-t)^{i}\,(\gamma+i+1) \,a_{j,i+1}\,Y(x)\\
= \,  & (1-t)^j \,\lambda_n^{(\gamma)}\,Y(x) +\sum_{i=0}^{j-1} \,(1-t)^i \\
& \times \{a_{j,i} \,[\lambda_{n-j}^{(\gamma+2i)} - i(d+\gamma+i-1)] + a_{j,i+1}(i+1)(\gamma+i+1)\}Y(x),
\end{align*}
since $a_{j,j} = 1$. Thus, in order to have $Z_{j,n} (x,t)$ satisfying \eqref{eq:Z}, we must have
$$
a_{j,i} \,[\lambda_{n-j}^{(\gamma+2i)} - i(d+\gamma+i-1)] + a_{j,i+1}(i+1)(\gamma+i+1) = a_{j,i} \lambda_n^{(\gamma)}
$$
for $0 \le i \le j-1$, which leads to the recurrence relation
% Therefore,
%$$
%a_{j,i}^{n, \gamma}\,(j-i)(2n-j+\gamma+d+i-1) = - a_{j,i+1}^{n, \gamma}(i+1)(\gamma+i+1),
%$$
%and then
$$
a_{j,i}  = - \frac{(i+1)(\gamma+i+1)}{(j-i)(2n+\gamma+d-1-j+i)}a_{j,i+1} , \quad i=0, 1, \ldots, j-1.
$$
Solving this recurrence relation and using $a_{j,j}  = 1$, we obtain
\begin{equation}\label{def:a}
a_{j,i} = (-1)^{j-i}\frac{(i+1)_{j-i}(\gamma+i+1)_{j-i}}{(j-i)!(2n+\gamma+d- 1-j+i)_{j-i}}, \quad 0\leq i \leq j.
\end{equation}
%that it is well defined for $2n+\gamma+d-r-1\neq 0$, for $0\leq r\leq j$.
Using the identity $(a+i)_{j-i} = (a)_j/(a)_i$ and $(j-i)! = (-1)^i j! / (-j)_i$, we can rewrite it as
$$
 a_{j,i} = (-1)^j c_j \frac{(-j)_i (2n+\g+d-j-1)_i}{i! (\g+1)_i}, \quad c_j= \frac{(\g+1)_j}{(2n+\g+d-j-1)_j},
$$
which is well-defined if $(2n+\g+d-j-1)_j \ne 0$. Consequently, the polynomial $p$ must be given by
\begin{align*}
 p(x) \,& = (-1)^j c_j
    \phantom{i}_2F_1\left ( \begin{matrix} -j, 2n+\g+d-j-1 \\ \g+1 \end{matrix}; 1-t \right) \\
& = (-1)^jc_j P_j^{(\g, 2n-2j+d-2)}(2t-1) =
      c_j P_j^{(2n-2j+d-2, \g)}(1-2t),
\end{align*}
which completes the proof.
\end{proof}

For $\g > -1$, the theorem recovers Theorem \ref{thm:Jacobi-DE-V0} established in \cite{X20}. We discuss the case
when $\g  = -s$ and $s \in \NN$ in the next subsection.

\subsection{Eigenfunctions for $\CD_{-s}$}
As a corollary of Theorem \ref{thm:Z}, we obtain the following result for $\CD_{-s}$.

\begin{prop}
Let $s \in \NN$ and $n \in \NN_0$. Then the polynomials $\sS_{m,\ell}^{n, (-1,-s)}$ are eigenfunctions
of $\CD_{-s}$ if and only if $m \le n-s$ and $m =n$.
\end{prop}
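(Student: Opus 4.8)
The plan is to organize the argument by the value of $j:=n-m$, the degree of the one–variable factor of $\sS_{m,\ell}^{n,(-1,-s)}$, and to exploit the explicit form \eqref{eq:sS-explicit} together with the action formula \eqref{eq:D_gamma}. On the conic surface $Y_\ell^m(x)=t^mY_\ell^m(\xi)$, so $\sS_{m,\ell}^{n,(-1,-s)}$ always has the shape $p(t)\,t^mY_\ell^m(\xi)$ with $\deg p=j$ and $Y_\ell^m\not\equiv 0$; thus deciding whether $\CD_{-s}\sS_{m,\ell}^{n,(-1,-s)}=\lambda\,\sS_{m,\ell}^{n,(-1,-s)}$ for some scalar $\lambda$ reduces to a polynomial identity in the single variable $t$. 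There are three cases: $j=0$ (i.e. $m=n$), $j\ge s$ (i.e. $m\le n-s$), and $1\le j\le s-1$ (the intermediate range, which is exactly where the statement asserts failure). I will show that the first two cases give eigenfunctions and the third does not.

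For $j=0$ we have $\sS_{n,\ell}^{n,(-1,-s)}(x,t)=Y_\ell^n(x)$, a solid harmonic of degree $n$, and the computation \eqref{eq:Delta0Y} (which is valid for every real parameter) applied with $\gamma=-s$ gives $\CD_{-s}Y_\ell^n=-n(n+d-s-1)Y_\ell^n=\lambda_n^{(-s)}Y_\ell^n$. For $j\ge s$ I use \eqref{eq:sS-explicit}: $\sS_{m,\ell}^{n,(-1,-s)}=b_{m,n}^s(1-t)^s\sS_{m,\ell}^{n-s,(-1,s)}$ with $b_{m,n}^s\ne 0$. Applying \eqref{eq:D_gamma} with $k=s$, $\gamma=-s$, the crucial point is that then $d-1-(d+\gamma+k-1)t=d-1-(d-1)t=(d-1)(1-t)$, so the correction term acquires a full factor $(1-t)^s$ and
\[
  \CD_{-s}\bigl[(1-t)^s P\bigr]=(1-t)^s\bigl(\CD_{s}-s(d-1)\bigr)P .
\]
Since $\sS_{m,\ell}^{n-s,(-1,s)}\in\CV_{n-s}^d(\sw_{-1,s})$, Theorem~\ref{thm:Jacobi-DE-V0} gives $\CD_s\sS_{m,\ell}^{n-s,(-1,s)}=-(n-s)(n+d-1)\sS_{m,\ell}^{n-s,(-1,s)}$, hence $\CD_{-s}\sS_{m,\ell}^{n,(-1,-s)}$ equals $\bigl(-(n-s)(n+d-1)-s(d-1)\bigr)\sS_{m,\ell}^{n,(-1,-s)}$, and a one-line check shows $-(n-s)(n+d-1)-s(d-1)=-n(n+d-s-1)=\lambda_n^{(-s)}$. (Alternatively, one could invoke Theorem~\ref{thm:Z} with $\gamma=-s$ after using \eqref{eq:Jacobi_n<0} to identify $J_{n-m}^{(2m+d-2,-s)}$ with a nonzero multiple of $P_{n-m}^{(2m+d-2,-s)}$, which has full degree $n-m$ when $n-m\ge s$; the hypothesis $2n-s+d-1-r\ne 0$ for $0\le r\le n-m$ holds since its minimum $n+m+d-s-1\ge d-1\ge 1$.)

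For $1\le j\le s-1$, \eqref{eq:sS-explicit} gives $\sS_{m,\ell}^{n,(-1,-s)}(x,t)=(1-t)^{j}t^mY_\ell^m(\xi)$. Applying \eqref{eq:D_gamma} with $k=j$, $p(t)=t^m$, $\gamma=-s$, and using \eqref{eq:Delta0Y} on the solid harmonic $t^mY_\ell^m(\xi)$, one obtains
\[
  \CD_{-s}\sS_{m,\ell}^{n,(-1,-s)}=\mu\,(1-t)^{j}t^mY_\ell^m(\xi)-j\,(1-t)^{j-1}\bigl[d-1-(d-s+j-1)t\bigr]t^mY_\ell^m(\xi),
\]
where $\mu=-m(2n-m+d-s-1)$. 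If this were equal to $\lambda\,(1-t)^{j}t^mY_\ell^m(\xi)$, cancelling the nonzero factor $(1-t)^{j-1}t^mY_\ell^m(\xi)$ would force the polynomial identity $(\mu-\lambda)(1-t)=j\bigl[d-1-(d-s+j-1)t\bigr]$; evaluating at $t=1$ yields $0=j(s-j)$, impossible for $1\le j\le s-1$. Hence $\sS_{m,\ell}^{n,(-1,-s)}$ is not an eigenfunction of $\CD_{-s}$, which finishes the proof. The only real subtlety — and the reason the three cases cannot be merged — is that Theorem~\ref{thm:Z} characterizes only eigenfunctions with the \emph{specific} eigenvalue $\lambda_n^{(\gamma)}$, and for $\gamma=-s$ with $j<s$ its candidate polynomial $P_j^{(\,\cdot\,,-s)}$ suffers a degree reduction; so the failure case must be settled by the direct computation above, whose crux is that the correction term in \eqref{eq:D_gamma} is divisible by $(1-t)$ precisely when $\gamma+k=0$, i.e. exactly when the power of $(1-t)$ equals $s$.
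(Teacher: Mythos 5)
Your proof is correct, and it takes a partly different route from the paper's. For the two positive cases the paper argues via \eqref{eq:Jn(a,-s)=Pn}: for $n-m\ge s$ it identifies $J_{n-m}^{(2m+d-2,-s)}$ with a nonzero multiple of the Jacobi polynomial $P_{n-m}^{(2m+d-2,-s)}$ and invokes Theorem~\ref{thm:Z}, and for $m=n$ it notes $\sS_{n,\ell}^{n,(-1,-s)}=Y_\ell^n$; you instead use the factorization \eqref{eq:sS-explicit} together with \eqref{eq:D_gamma} for $k=s$, $\gamma=-s$ (where the bracket becomes $(d-1)(1-t)$, so the correction term keeps the full factor $(1-t)^s$) and the known case $\gamma=s>-1$ of Theorem~\ref{thm:Jacobi-DE-V0}; your eigenvalue bookkeeping $-(n-s)(n+d-1)-s(d-1)=-n(n+d-s-1)=\lambda_n^{(-s)}$ is right, and your parenthetical alternative via Theorem~\ref{thm:Z}, including the check that the nondegeneracy hypothesis holds because $2n-s+d-1-r\ge n+m+d-s-1\ge d-1>0$ when $n-m\ge s$, is essentially the paper's argument. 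The more substantive difference is in the failure range $1\le n-m\le s-1$: the paper simply observes that there $J_{n-m}^{(2m+d-2,-s)}$ is a multiple of $(1-t)^{n-m}$, not the Jacobi polynomial, and appeals to the uniqueness in Theorem~\ref{thm:Z}, which strictly speaking only rules out the specific eigenvalue $\lambda_n^{(-s)}$ (and tacitly needs that theorem's nondegeneracy hypothesis); your direct computation---apply \eqref{eq:D_gamma} with $k=j$, use \eqref{eq:Delta0Y}, cancel $(1-t)^{j-1}t^mY_\ell^m(\xi)$, and evaluate at $t=1$ to get $0=j(s-j)\ne 0$---excludes \emph{every} possible eigenvalue, so on the ``only if'' side your argument is, if anything, more complete than the paper's terser one. (Like the paper's own proof, you read the ``and'' in the statement as the intended ``or''.)
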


\begin{proof}
By \eqref{eq:Jn(a,-s)=Pn}, the polynomials $\sS_{m,-\ell}^{n,(-1,-s)}$ can be written as
$$
\sS_{m,\ell}^{n,(-1,-s)}(x,y) = c_{n-m} P_{n-m}^{(2m+d-2,-s)}(1-2t) Y_\ell^m(x), \quad n-m \ge s,
$$
so that Theorem \ref{thm:Z} applies for $n - m \ge s$, thus $\sS_{m,\ell}^{n, (-1,-s)}$ satisfies
\eqref{eq:Z}. If $n = m$, then $\sS_{n, \ell}^{n, (-1,-s)}(x,t) = Y_\ell^n(x)$. As a result,
\eqref{eq:Z} holds with $p(z) =1$. For $1 \le n-m \le s-1$, however, the polynomial
$J_{n-m}^{(2m+d-2,-s)}(t) = (1-t)^{n-m}/(n-m)!$ is not the Jacobi polynomial $P_{n-m}^{(2m+d-2,-s)}(t)$,
hence $\sS_{n, \ell}^{n, (-1,-s)}$ is not a solution of \eqref{eq:Z}.
\end{proof}

In the case $s =1$, the set $\{m: n -m \ge s\} \cup \{n\} = \{m: 1 \le m \le n\}$. Hence, we obtain the
following corollary.

\begin{thm}
For $\g = -1$, all elements in $\CV_n^d(\sw_{-1,-1})$ are eigenfunctions of $\CD_{-1}$; that is,
$$
   \CD_{-1} Z = - n  (n+ d-2) Z, \qquad \forall Z \in \CV_n^d(\sw_{-1,-1}).
$$
Moreover, the space $\CV_n^d(\sw_{-1,-1})$ satisfies a decomposition
$$
\CV_n^d(\sw_{-1,-1}) =  \CH_n^{d,0} \cup (1-t) \CV_{n-1}^d(\sw_{-1,1}), \qquad n =0,1,\ldots.
$$
\end{thm}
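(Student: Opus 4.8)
The plan is to read off both assertions from the proposition proved just above by specializing $s=1$, with only a small amount of bookkeeping. For $s=1$ the index set $\{m : n-m \ge s\}\cup\{n\}$ appearing there equals $\{0,1,\dots,n-1\}\cup\{n\}=\{0,1,\dots,n\}$, so that \emph{every} basis polynomial $\sS_{m,\ell}^{n,(-1,-1)}$ of $\CV_n^d(\sw_{-1,-1})$ is an eigenfunction of $\CD_{-1}$. Hence the first step is simply to invoke that proposition; there is nothing new to prove in establishing that each individual basis element is an eigenfunction.

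The second step is to verify that all these eigenfunctions carry the \emph{same} eigenvalue $-n(n+d-2)$. For $0\le m\le n-1$ the relevant eigenvalue is the one supplied by Theorem~\ref{thm:Z} applied with $\g=-1$ — legitimate because \eqref{eq:Jn(a,-s)=Pn} writes $\sS_{m,\ell}^{n,(-1,-1)}$ as a genuine Jacobi polynomial times a solid harmonic, and the nonvanishing hypothesis of Theorem~\ref{thm:Z} holds for $\g=-1$ and $0\le r\le m\le n-1$ when $d\ge 2$ — which gives $\l_n^{(-1)}=-n(n+(-1)+d-1)=-n(n+d-2)$. For $m=n$ we have $\sS_{n,\ell}^{n,(-1,-1)}(x,t)=Y_\ell^n(x)$, a solid harmonic in $\CH_n^{d,0}$, and \eqref{eq:Delta0Y} with $\g=-1$ gives $\CD_{-1}Y_\ell^n=-n(n+d-2)Y_\ell^n$ — the same value. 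Since by Theorem~\ref{thm:SOP-cone} the polynomials $\sS_{m,\ell}^{n,(-1,-1)}$, $0\le m\le n$, form a basis of $\CV_n^d(\sw_{-1,-1})$ and $\CD_{-1}$ is linear, it follows at once that $\CD_{-1}Z=-n(n+d-2)Z$ for every $Z\in\CV_n^d(\sw_{-1,-1})$, which is the first assertion.

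The decomposition is the case $s=1$, $\b=-1$ of \eqref{eq:decompVnd}: the direct sum $\bigoplus_{j=0}^{s-1}(1-t)^j\CH_{n-j}^{d,0}$ collapses to the single summand $\CH_n^{d,0}$, while $(1-t)^s\CV_{n-s}^d(\sw_{\b,s})$ becomes $(1-t)\CV_{n-1}^d(\sw_{-1,1})$. I do not foresee a genuine obstacle here, since the statement is essentially a corollary of the preceding proposition together with \eqref{eq:decompVnd}; the only point needing a moment's care is reconciling the two a priori different eigenvalue computations — the Jacobi-polynomial route through Theorem~\ref{thm:Z} for $m<n$ and the solid-harmonic route through \eqref{eq:Delta0Y} for $m=n$ — and checking that the nonvanishing condition in Theorem~\ref{thm:Z} is met when $\g=-1$.
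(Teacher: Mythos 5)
Your proposal is correct and takes essentially the same route as the paper: the eigenfunction claim is exactly the $s=1$ case of the preceding proposition (with the common eigenvalue $\lambda_n^{(-1)}=-n(n+d-2)$ coming from Theorem~\ref{thm:Z} together with \eqref{eq:Delta0Y}), and the decomposition is the $s=1$, $\b=-1$ case of \eqref{eq:decompVnd}, which the paper simply re-derives on the spot from \eqref{eq:Jn(a,-s)=Pn}. One cosmetic slip: the nonvanishing condition in Theorem~\ref{thm:Z} should be checked for $0\le r\le j=n-m$ (the degree of the Jacobi factor), not $0\le r\le m$, but it holds in either reading since $2n+d-2-r\ge n+d-2>0$ for $n\ge1$, $d\ge2$.
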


\begin{proof}
We only need to prove the decomposition. If $m \le n-1$, it follows from from \eqref{eq:Jn(a,-s)=Pn} that
$\sS_{m,\ell}^{n,(\b,-1)}  = c (1-t) \sS_{m,\ell}^{n-1,(\b,1)} \in (1-t) \CV_{n-1}^d(\sw_{-1,1})$, whereas if $m = n$,
then $\sS_{n,\ell}^{n,(\b,-s)} = Y_\ell^n \in \CH_n^{d,0}$.
\end{proof}

The theorem shows, in particular, that $\CD_{\g}$ has the complete eigenspaces if $\g \ge -1$.
For $s = 2,3,\ldots$, however, not every element of the space $\CV_n^d(\sw_{-1,-s})$ is an eigenfunction of
$\CD_{ -s}$ by Theorem \ref{thm:Z}. We can, however, identify the eigenspaces of the operator as follows.

\begin{thm}
For $s = 2,3,\ldots$, define
\begin{align*}
\CU_n^d(\sw_{-1,-s}):= \CH_{n}^{d,0}  \bigcup_{j=1}^{s-1}P_j^{(2n-2j+d-2,-s)}(1-2t)\CH_{n-j}^{d,0}
     \cup (1-t)^s \CV_{n-s}^d (\sw_{-1,s}).
\end{align*}
Then $\CU_n^d(\sw_{-1,-s})$ is the eigenspace of $\CD_{-s}$; more precisely,
$\CD_{-s} Z=\lambda_n^{(-s)}Z$ for all $Z \in \CU_n^d(\sw_{-1,-s})$. Moreover, the space satisfies
$$
  \dim \CU_n^d(\sw_{-1,-s}) = \binom{n+d-1}{d}
$$
if $-2n-d+j+s+2$ is not a positive integer between 1 and $j$ for $1 \le j \le s-1$ and $n \ge j$.
\end{thm}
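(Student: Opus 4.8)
The plan is to verify the eigenfunction property on each of the three blocks of $\CU_n^d(\sw_{-1,-s})$ separately, then to show their union is a direct sum of the asserted dimension, and finally to note that no other eigenfunctions occur. Write $\lambda_n^{(\g)}=-n(n+\g+d-1)$, so the target eigenvalue is $\lambda_n^{(-s)}=-n(n+d-s-1)$. For the block $\CH_n^{d,0}$ the relation $\CD_{-s}Y=\lambda_n^{(-s)}Y$ is exactly \eqref{eq:Delta0Y} with $\g=-s$. For a middle block, fix $1\le j\le s-1$ with $n\ge j$ and $Y\in\CH_{n-j}^{d,0}$; Theorem~\ref{thm:Z} with $\g=-s$ gives $\CD_{-s}\bigl[P_j^{(2n-2j+d-2,-s)}(1-2t)Y(x)\bigr]=\lambda_n^{(-s)}P_j^{(2n-2j+d-2,-s)}(1-2t)Y(x)$, the arithmetic hypothesis that $-2n-d+j+s+2$ is not a positive integer in $\{1,\dots,j\}$ being precisely the non-degeneracy condition needed there — equivalently, by the discussion of degree reduction in Section~2, the condition that $P_j^{(2n-2j+d-2,-s)}$ really has degree $j$. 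These middle families are what is new in the Sobolev setting: the basis polynomials $\sS_{m,\ell}^{n,(-1,-s)}$ with $0<n-m<s$ carry the factor $J_{n-m}^{(2m+d-2,-s)}(1-2t)\propto(1-t)^{n-m}$ rather than a Jacobi polynomial and hence are not eigenfunctions, so $\CU_n^d(\sw_{-1,-s})$ replaces them by the genuine eigenfunctions $P_j^{(2n-2j+d-2,-s)}(1-2t)\CH_{n-j}^{d,0}$.

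For the third block let $Z\in\CV_{n-s}^d(\sw_{-1,s})$, so that $\CD_{s}Z=\lambda_{n-s}^{(s)}Z$ by Theorem~\ref{thm:Jacobi-DE-V0}. The key point is that the conjugation identity \eqref{eq:D_gamma}, taken with $k=s$ and $\g=-s$, collapses, because $d-1-(d+\g+k-1)t=d-1-(d-1)t=(d-1)(1-t)$; thus
\begin{equation*}
  \CD_{-s}\bigl[(1-t)^s Z\bigr]=(1-t)^s\bigl(\CD_{s}Z-s(d-1)Z\bigr)=\bigl(\lambda_{n-s}^{(s)}-s(d-1)\bigr)(1-t)^s Z .
\end{equation*}
Since $\lambda_{n-s}^{(s)}-s(d-1)=-(n-s)(n+d-1)-s(d-1)=-n(n+d-s-1)=\lambda_n^{(-s)}$, the block $(1-t)^s\CV_{n-s}^d(\sw_{-1,s})$ also lies in the $\lambda_n^{(-s)}$-eigenspace, and together with the first two blocks this establishes $\CD_{-s}Z=\lambda_n^{(-s)}Z$ for all $Z\in\CU_n^d(\sw_{-1,-s})$.

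That the union is direct and of the right size follows by expanding in spherical harmonics of the $\xi$-variable: a summand $p(t)Y_\ell^m(x)$ of the first block has $m=n$, of a middle block has $m=n-j\in\{n-s+1,\dots,n-1\}$, and of the last block has $m\in\{0,\dots,n-s\}$, and these ranges partition $\{0,\dots,n\}$, so the three blocks — and the middle summands for distinct $j$ — intersect trivially. Under the non-degeneracy hypothesis each $P_j^{(2n-2j+d-2,-s)}$ has exact degree $j$, hence the $j$-th middle block has dimension $\dim\CH_{n-j}^{d}$ and
\begin{equation*}
  \dim\CU_n^d(\sw_{-1,-s})=\sum_{j=0}^{s-1}\dim\CH_{n-j}^{d}+\dim\CV_{n-s}^d(\sw_{-1,s}),
\end{equation*}
which by the decomposition \eqref{eq:decompVnd} equals $\dim\CV_n^d(\sw_{-1,-s})$; summing $\dim\CH_m^d$ over $0\le m\le n$, together with $\dim\CV_{n-s}^d(\sw_{-1,s})=\sum_{m=0}^{n-s}\dim\CH_m^d$, gives the stated value. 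Maximality is obtained similarly: if $Z$ has degree at most $n$ and $\CD_{-s}Z=\lambda_n^{(-s)}Z$, then since $\CD_{-s}$ acts on $\xi$ only through $\Delta_0^{(\xi)}$, which is the scalar $-m(m+d-2)$ on $\CH_m^{d}$, each spherical-harmonic component $p_{m,\ell}(t)Y_\ell^m(x)$ is itself a $\lambda_n^{(-s)}$-eigenfunction, and the uniqueness in Theorem~\ref{thm:Z} forces $\deg p_{m,\ell}=n-m$ with $p_{m,\ell}$ the associated Jacobi polynomial, placing $Z$ in $\CU_n^d(\sw_{-1,-s})$ by \eqref{eq:Jacobi_n<0} and \eqref{eq:Jn(a,-s)=Pn}.

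The main obstacle is the third block: one must identify the right conjugation — \eqref{eq:D_gamma} with $k=s$, $\g=-s$ — and spot the cancellation $d-1-(d+\g+k-1)t=(d-1)(1-t)$, which is precisely what turns the restriction of $\CD_{-s}$ to $(1-t)^s\CV_{n-s}^d(\sw_{-1,s})$ into a scalar shift of $\CD_{s}$; the identity $\lambda_{n-s}^{(s)}-s(d-1)=\lambda_n^{(-s)}$ then reconciles the three eigenvalues. The other delicate point is the bookkeeping: locating exactly where the degree-reduction hypothesis enters (it is needed both for the middle blocks to be eigenfunctions via Theorem~\ref{thm:Z} and for them to contribute their full dimension $\dim\CH_{n-j}^{d}$). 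Everything else is a direct appeal to Theorems~\ref{thm:Jacobi-DE-V0} and~\ref{thm:Z}, the decomposition \eqref{eq:decompVnd}, and the Jacobi identities of Section~2.
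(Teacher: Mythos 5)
Your argument is correct, and on the blocks $\CH_n^{d,0}$ and $P_j^{(2n-2j+d-2,-s)}(1-2t)\CH_{n-j}^{d,0}$, $1\le j\le s-1$, it coincides with the paper's proof: both rest on Theorem~\ref{thm:Z} with $\g=-s$ (the $j=0$ case being \eqref{eq:Delta0Y}), with the arithmetic hypothesis entering as the no-degree-reduction condition. Where you genuinely diverge is the block $(1-t)^s\CV_{n-s}^d(\sw_{-1,s})$: the paper rewrites its basis elements via \eqref{eq:Jn(a,-s)=Pn} as constant multiples of $P_{n-m}^{(2m+d-2,-s)}(1-2t)Y_\ell^m(x)$ with $n-m\ge s$ and then applies Theorem~\ref{thm:Z} once more, whereas you conjugate $\CD_{-s}$ by $(1-t)^s$ through \eqref{eq:D_gamma} with $k=s$, $\g=-s$, exploit the cancellation $d-1-(d+\g+s-1)t=(d-1)(1-t)$, and invoke Theorem~\ref{thm:Jacobi-DE-V0} for $\g=s>-1$ together with the identity $\lambda_{n-s}^{(s)}-s(d-1)=\lambda_n^{(-s)}$. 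This is a valid and rather clean alternative: it bypasses Theorem~\ref{thm:Z} (and its nondegeneracy hypothesis, which for $j\ge s$ is automatic anyway) for this block, at the price of a separate eigenvalue computation, while the paper's route handles all blocks with the single theorem. You also make explicit two points the paper leaves implicit: the directness of the sum, read off from the spherical-harmonic degree $m$, and the count $\dim\CU_n^d(\sw_{-1,-s})=\sum_{m=0}^n\dim\CH_m^d=\dim\CV_n^d(\sw_{-1,-s})$, which is what ``full dimension'' means here (the binomial coefficient printed in the statement does not agree with this number; your count is the one consistent with the dimension formula of Section~3). Finally, your closing maximality sketch proves more than the paper does (the theorem's precise claim is only that every $Z\in\CU_n^d(\sw_{-1,-s})$ is an eigenfunction); as written it glosses over possible coincidences $\lambda_{n'}^{(-s)}=\lambda_n^{(-s)}$ with $n'\ne n$, which can occur for small $n$ since $n\mapsto -n(n+d-s-1)$ need not be injective, so treat that part as a remark rather than a step of the proof — it does not affect the correctness of your argument for the stated result.
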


\begin{proof}
As in the case of $s = 1$, using \eqref{eq:Jn(a,-s)=Pn}, the statement on the eigenspace follows readily
from Theorem \ref{thm:Z}. As it is shown for the dimension of $\CV_n^d(\sw_{\b,-s})$, the space
$\CU_n^d(\sw_{-1,-s})$ has the full dimension $\binom{n+d-1}{d}$ if the Jacobi polynomials
$P_j^{(2n-2j+d-2,-s)}$ does not have the degree reduction, which holds if $2n-j+d-2-s +k =0$ for a
certain integer $k$, $1 \le k \le j$, by \cite[p. 64]{Sz}.
\end{proof}

The theorem shows that the operator $\CD_{-s}$ has a complete basis of polynomials as eigenfunctions
only if the restriction on $-2n-d+j+s+2$ as stated holds.

\begin{exam}
If $s = 2$, then the space $\CU_n^d(\sw_{-1,-2})$ is given by
$$
  \CU_n^d(\sw_{-1,-2}) = \CH_n^d \cup  \left(1+ (2n+d-4)(1-t) \right) \CH_{n-1}^d
   \cup (1-t)^s \CV_{n-2}^d (\sw_{-1,2})
$$
and it has the full dimension if $-2n-d+5 \ne 1$ or $2n-d+4 \ne 0$ for $n \ge 1$. Thus, $\CD_{-2}$ has a complete
basis of polynomials as eigenfunctions if $d$ is odd. In the case that $d$ is even, $ \CU_n^d(\sw_{-1,-2})$ is
not well-defined for $n = d/2-2$.
\end{exam}

We note that the space $\CU_n^d(\sw_{-1,-s})$ does not coincide with $\CV_n^d(\sw_{-1,-s})$ for $s =2,3,\ldots$.
In the case of $s = -2$ and $d$ is odd, one may ask if there is a Sobolev inner product for which the polynomials
in the space $\CU_n^d(\sw_{-1,-s})$ are orthogonal.

\bigskip

The authors have no relevant financial or non-financial interests to disclose.

\end{document}